\documentclass[12pt, reqno]{amsart}
\usepackage{amsmath,amssymb,amsthm,amsxtra, setspace}
\usepackage{amssymb}
\usepackage{stmaryrd}
\usepackage{alltt}
\usepackage{graphicx,type1cm,xcolor}
\usepackage{mathtools, accents}
\usepackage{mathrsfs}
\usepackage{hyperref}
\usepackage{alltt}
\usepackage{fouridx}
\usepackage{dsfont}
\usepackage{dsfont}
\usepackage{cancel}
\usepackage[mathcal]{euscript}
\usepackage[margin=1in]{geometry}
\usepackage{shadethm}
\usepackage{float}

\allowdisplaybreaks

\newcommand{\norm}[1]{\left\|#1\right\|}

\numberwithin{equation}{section}

\newshadetheorem{Theorem}{Theorem}
\newshadetheorem{Proposition}{Proposition}
\newshadetheorem{Lemma}{Lemma}
\newshadetheorem{Assumption}{Assumption}
\newshadetheorem{Corollary}{Corollary}
\newshadetheorem{Definition}{Definition}
\newshadetheorem{Example}{Example}
\newshadetheorem{Remark}{Remark}
\newshadetheorem{Note}{Note}
\newshadetheorem{Hypothesis}{Hypothesis}
\newshadetheorem{Explanation}{Explanation}
\newtheorem{theorem}{Theorem}[section]
\newtheorem{lemma}[theorem]{Lemma}
\newtheorem{proposition}[theorem]{Proposition}

\newtheorem{definition}[theorem]{Definition}

\newtheorem{remark}[theorem]{Remark}

\let\originalleft\left
\let\originalright\right
\renewcommand{\left}{\mathopen{}\mathclose\bgroup\originalleft}
\renewcommand{\right}{\aftergroup\egroup\originalright}

\newcommand{\ip}[2]{\fourIdx{}{0}{}{\!x}{\mathcal{ X}}}

\newcommand\dela[1]{}
\hypersetup{colorlinks=true,%
	citecolor=red,%
	filecolor=blue,%
	linkcolor=blue,%
}
\usepackage{graphicx}
\usepackage[utf8]{inputenc}
\usepackage[T1]{fontenc}
\mathtoolsset{showonlyrefs}
\usepackage{nomencl}
\makenomenclature
\usepackage{amssymb}

\usepackage[symbol]{footmisc}
\usepackage{centernot}
\usepackage{ragged2e}
\justifying
\usepackage{orcidlink}

\usepackage{todonotes}

\usepackage{xpatch}
\makeatletter   
\xpatchcmd{\@tocline}
{\hfil\hbox to\@pnumwidth{\@tocpagenum{#7}}\par}
{\ifnum#1<0\hfill\else\dotfill\fi\hbox to\@pnumwidth{\@tocpagenum{#7}}\par}
{}{}
\makeatother    
\makeatletter
\def\l@subsection{\@tocline{2}{0pt}{4pc}{6pc}{}}
\def\l@subsubsection{\@tocline{3}{0pt}{8pc}{8pc}{}}
\makeatother

\makeatletter
\def\l@section{\@tocline{1}{12pt}{0pt}{}{\bfseries}}% <- added
\makeatother
\usepackage{enumitem}

\title[Constrained Kuramoto--Sivashinsky Equation]{Global Well-Posedness of the Kuramoto--Sivashinsky Equation in One and Two Dimensions under a Codimension-One Constraint}

\begin{document}
	\maketitle
	\begin{center}
		\author{Harsh Tiwari\footnote[4]{Department of Mathematics, Indian Institute of Technology Roorkee-IIT Roorkee, Haridwar Highway, Roorkee, Uttarakhand 247667, India.\\
				\textit{e-mail:} H. Tiwari: \email{harsh\_t@ma.iitr.ac.in; harshtiwari98765@gmail.com}.\\
				\textit{e-mail:} Manil T. Mohan: \email{maniltmohan@ma.iitr.ac.in; maniltmohan@gmail.com}.\\
				$\hspace{2mm} ^\ast$Corresponding author.\\
				\textit{MSC 2020}:  35B40,  	%Asymptotic behavior of solutions to PDEs
				35L15  	%Initial value problems for second-order hyperbolic equations
				35L70,  	%Second-order nonlinear hyperbolic equations
				35R01,  	%PDEs on manifolds
				58J45,  	%Hyperbolic equations on manifolds
				\\
				\textit{Key words}: 
				%Damped wave equation $\cdot$  Feado-Galerkin approximation $\cdot$ Constrained PDEs $\cdot$ Strong solutions $\cdot$ Asymtotic precompactness $\cdot$  Asymtotic analysis
%				\textit{Key words}: Stochastic heat equation $\cdot$ Martingale solutions $\cdot$ Multiplicative noise $\cdot$ Constraints $\cdot$ Stochastic gradient flow $\cdot$ Pathwise uniqueness $\cdot$ It\^o formula
			}\orcidlink{0009-0004-4685-8878} and Manil T. Mohan$^\ast$\footnotemark[4]\orcidlink{0000-0003-3197-1136}}
	\end{center}

	\begin{abstract}
We study the Kuramoto--Sivashinsky (KS) equation  in one, two, and three spatial dimensions subject to a codimension-one constraint. The constraint preserves the \(L^2\)-norm of the solution and consequently provides a uniform a priori bound that plays a central role in the higher-order energy estimates. In one spatial dimension, we establish global well-posedness by means of a Fourier--Galerkin approximation and compactness arguments. In two and three dimensions, we first prove the existence and uniqueness of local strong solutions using analytic semigroup theory and the Banach fixed-point theorem. In two dimensions, the preserved \(L^2\)-norm, combined with the Agmon inequality and suitable higher-order energy estimates, allows us to control the nonlinear terms and obtain global-in-time a priori bounds. Consequently, we establish the existence and uniqueness of global strong solutions for arbitrary sufficiently regular initial data, without any smallness assumption. This provides a global well-posedness result for the constrained two-dimensional KS equation, in contrast with the corresponding unconstrained problem, for which global well-posedness of strong solutions remains open in general. In three dimensions, although the constraint yields a uniform \(L^2\)-bound, the available interpolation and Agmon estimates are insufficient to close the higher-order energy estimate. This identifies a genuine dimension-dependent obstruction to extending the global regularity argument to three dimensions. 
	\end{abstract}

	\begin{center}
		\tableofcontents
	\end{center}
	\section{Introduction}

	The Kuramoto--Sivashinsky (KS)  equation is a fundamental nonlinear evolution equation arising in the mathematical modeling of	instability-driven phenomena, including flame-front propagation and crystal growth. The equation originated in the works of Kuramoto and
	Tsuzuki on dissipative structures in reaction--diffusion systems
	\cite{KuramotoTsuzuki1975,KuramotoTsuzuki1976} and in Sivashinsky's
	investigation of flame-front instabilities \cite{Sivashinsky1977,Sivashinsky1980}.
	Since then, the KS equation has become an important model problem in the study
	of nonlinear partial differential equations, dissipative dynamical
	systems, and chaotic dynamics. Throughout this paper, for $L>0$ and
	$d\in\mathbb N$, we denote the $d$-dimensional flat torus of side
	length $L$ by
	\begin{align*}
	\mathbb T_L^d:=(\mathbb R/L\mathbb Z)^d.
	\end{align*}
	Thus, $\mathbb T_L=\mathbb R/L\mathbb Z$ denotes the one-dimensional
	periodic domain, while $\mathbb T_L^2$ and $\mathbb T_L^3$ denote the
	two- and three-dimensional periodic domains, respectively.

	\subsection{One-dimensional KS equation}
	The scalar one-dimensional KS equation is given by
	\begin{equation}\label{eq:1d-KS equation}
		u_t+u_{xxxx}+u_{xx}
		+\frac12(u_x)^2=0,
		\ \ x\in\mathbb T_L.
	\end{equation}
	The equation exhibits a competition between the fourth-order
	dissipative term $u_{xxxx}$ and the destabilizing second-order term
	$u_{xx}$, while the quadratic gradient nonlinearity generates a
	nontrivial nonlinear interaction.  	It is often useful to introduce the derivative variable
	$
	v=u_x.
	$
	Differentiating \eqref{eq:1d-KS equation} with respect to $x$ gives
	\begin{equation}\label{eq:1d-v-KS equation}
		v_t+v_{xxxx}+v_{xx}+vv_x=0,
		\ \ x\in\mathbb T_L.
	\end{equation}
	Indeed,
	$
	\frac{	\partial}{\partial x}\left(\frac12(u_x)^2\right)
	=u_xu_{xx}=vv_x.
	$
	The variable $v$ necessarily satisfies the zero-mean condition
	\begin{align*}
	\int_{\mathbb T_L}v(x,t) dx=0,
	\end{align*}
	which follows from its definition as the spatial derivative of a
	periodic function. Conversely, $u$ can be recovered from $v$, up to an
	additive spatial constant. Thus, \eqref{eq:1d-KS equation} and
	\eqref{eq:1d-v-KS equation} provide equivalent formulations once the spatial
	mean of $u$ is fixed.

	The mathematical theory of KS equation is particularly well developed in one spatial dimension. Global well-posedness of the one-dimensional KS equation was established in the classical works of Nicolaenko, Scheurer, and Temam and Tadmor, among others \cite{NicolaenkoScheurer1984,NicolaenkoScheurerTemam1985,Tadmor1986}. In this setting, the nonlinear term possesses a special structure that allows one to obtain effective \(L^2\)-energy estimates and consequently global control of solutions. Beyond global existence, the one-dimensional equation has been extensively investigated from the viewpoint of its long-time behavior. In the periodic setting, solutions possess a finite-dimensional global attractor, and a substantial literature is devoted to estimates for attractor dimensions, determining modes, asymptotic behavior, and related dynamical properties; see, for example, \cite{BronskiGambill2006,ColletEtAl1993,ColletEtAl1993a,FoiasNicolaenkoSellTemam1988,GiacomelliOtto2005,Goodman1994,KostiankoTitiZelik2018,NicolaenkoScheurerTemam1985,Otto2009} and the references therein.

%	Despite this competition, the
%	one-dimensional KS equation is globally well posed for sufficiently regular
%	initial data \cite{NicolaenkoScheurer1984,Tadmor1986}. Its long-time
%	dynamics also possess a remarkable finite-dimensional structure,
%	including finite-dimensional global attractors and inertial manifolds
%	\cite{FoiasNicolaenkoSellTemam1988,NicolaenkoScheurerTemam1985}. These results have made the
%	one-dimensional KS equation a fundamental model for studying dissipative
%	dynamics, nonlinear instability, and chaotic behavior.

	\subsection{Multidimensional KS equation}
	For $d\geq2$, the natural scalar generalization of
	\eqref{eq:1d-KS equation} is
	\begin{equation}\label{eq:d-KS equation}
		\varphi_t+\Delta^2\varphi+\Delta \varphi
		+\frac12|\nabla \varphi|^2=0,
		\ \  x\in\mathbb T_L^d,
	\end{equation}
	where $\varphi$ is a scalar-valued function. The multidimensional equation
	retains the competition between the fourth-order dissipative term and
	the destabilizing second-order term, but its nonlinear structure leads
	to substantially greater analytical difficulties. In particular, the
	global well-posedness problem for the periodic multidimensional KS equation
	remains open in general for arbitrary large smooth initial data in
	dimensions $d\geq2$ \cite{LariosRahmanYamazaki2022}.
	As in one dimension, one may differentiate the scalar equation by
	introducing
$
	\boldsymbol{v}=\nabla \varphi.
$
	Differentiating \eqref{eq:d-KS equation} yields
	\begin{equation}\label{eq:gradient-KS equation}
		\boldsymbol{v}_t+\Delta^2\boldsymbol{v}+\Delta \boldsymbol{v}
		+\nabla\left(\frac12|\boldsymbol{v}|^2\right)=\boldsymbol{0}.
	\end{equation}
	Since $\boldsymbol{v}=\nabla \varphi$,
$
	\nabla\left(\frac12|\nabla \varphi|^2\right)
	=
	(\nabla \varphi\cdot\nabla)\nabla \varphi
	=
	(\boldsymbol{v}\cdot\nabla)\boldsymbol{v},
$
	and hence
	\begin{equation}\label{eq:vector-KS equation}
		\boldsymbol{v}_t+\Delta^2\boldsymbol{v}+\Delta \boldsymbol{v}+(\boldsymbol{v}\cdot\nabla)\boldsymbol{v}=\boldsymbol{0}.
	\end{equation}
	It is important to emphasize that \eqref{eq:vector-KS equation} is the
	gradient formulation of the scalar equation \eqref{eq:d-KS equation}, rather
	than an arbitrary vector-valued equation. The field $\boldsymbol{v}$ satisfies the
	structural constraint
$
	\boldsymbol{v}=\nabla \varphi.
$
	In particular, in three dimensions,
$
	\nabla\times \boldsymbol{v}=\boldsymbol{0},
$
	or, equivalently,
$
	\partial_i v_j=\partial_j v_i,
	\  1\leq i,j\leq 3.
$
	
	The analytical difficulty of the multidimensional problem is already
	visible in the basic energy balance for the gradient formulation.
	For a general vector field $\boldsymbol{v}$, periodic integration by parts gives
	\begin{align*}
	\bigl((\boldsymbol{v}\cdot\nabla)\boldsymbol{v},\boldsymbol{v}\bigr)
	=
	-\frac12
	\int_{\mathbb T_L^d}
	|\boldsymbol{v}|^2\operatorname{div}\boldsymbol{v} dx.
	\end{align*}
	Consequently, the transport term does not vanish unless an additional
	condition such as
$
	\operatorname{div}\boldsymbol{v}=0
$
	is imposed. This is in contrast with the incompressible
	Navier--Stokes equations, for which
	\begin{align*}
	\bigl((\boldsymbol{v}\cdot\nabla)\boldsymbol{v},\boldsymbol{v}\bigr)=0
	\end{align*}
	under the divergence-free condition. The absence of such a
	cancellation in the KS equation is one of the principal obstacles to obtaining
	global higher-order estimates in dimensions $d\geq2$
	\cite{LariosRahmanYamazaki2022}.
	
	The difficulty is particularly pronounced because the fourth-order
	term strongly dissipates high spatial frequencies, whereas the
	second-order term destabilizes the low-frequency modes. The nonlinear
	term can interact with this large-scale instability and does not
	disappear from the higher-order energy estimates. Consequently, the
	strong fourth-order dissipation alone has not been sufficient to yield
	a general global regularity theory in higher dimensions.

	For the two-dimensional KS equation, a number of global existence results have nevertheless been obtained under additional structural assumptions. One of the first directions concerned thin or anisotropic domains, where the geometry suppresses part of the multidimensional instability and allows the problem to be treated in a manner closely related to the one-dimensional equation. Such results were initiated by Sell and Taboada \cite{SellTaboada1992}, and subsequently developed in works including Molinet \cite{Molinet2000}, Benachour, Kukavica, Rusin, and Ziane \cite{BenachourKukavicaRusinZiane2014}, and Kukavica and Massatt \cite{KukavicaMassatt2023}. In particular, the existing two-dimensional global theories have often relied on anisotropy in the domain, smallness of the initial data, or restrictions on the unstable Fourier modes.

	The size of the periodic domain plays a crucial role in this problem because it determines the number of linearly growing Fourier modes associated with the destabilizing second-order operator. When the spatial periods are sufficiently small, no linearly growing modes are present, and global existence can be established for sufficiently small initial data in suitable Wiener-algebra-based spaces; see, for example, Ambrose and Mazzucato \cite{AmbroseMazzucato2019}. A significant subsequent development was obtained by Ambrose and Mazzucato \cite{AmbroseMazzucato2021}, who considered rectangular two-dimensional domains supporting one linearly growing mode in each spatial direction. By separating the Fourier modes into low, intermediate, and high frequencies, they controlled the unstable and weakly stable modes through a Lyapunov-function argument and the high-frequency modes through Wiener-algebra estimates. This yielded global existence for sufficiently small initial data and represented the first global existence result in the two-dimensional periodic setting with a linearly growing mode in each direction.

	Another important approach has been to exploit advection and enhanced dissipation. Feng and Mazzucato \cite{FengMazzucato2022} investigated the two-dimensional KS equation with an additional advection term, while Coti Zelati, Dolce, Feng, and Mazzucato \cite{CotiZelatiDolceFengMazzucato2021} established global existence for the KS equation in the presence of a prescribed shear flow. In the latter work, assuming that the shear possesses only finitely many critical points and that linearly growing modes occur only in the direction of the shear, global existence was obtained for arbitrarily large \(L^2\) initial data, provided the dissipation is sufficiently weak. The underlying mechanism is the enhanced dissipation generated by the interaction between the shear-induced mixing and the fourth-order diffusion, which transfers energy from large unstable scales toward small scales where it can be efficiently dissipated.

	Several other aspects of the two-dimensional problem have also been investigated. These include anisotropically reduced KS models, analyticity and Gevrey regularity, and modified equations designed to recover global well-posedness while retaining important features of the original dynamics. For example, Larios and Yamazaki \cite{LariosYamazaki2020} introduced an anisotropically reduced two-dimensional KS equation and established its global well-posedness. Ioakim and Smyrlis \cite{IoakimSmyrlis2016} studied analyticity for KS-type equations in two spatial dimensions, while related works have considered modified nonlinearities and boundary conditions. More recently, Enlow, Larios, and Wu \cite{EnlowLariosWu2024} proposed an algebraically calmed version of the two-dimensional KS equation that is globally well-posed for arbitrary domain size and initial data. Moreover, they proved convergence of the modified solutions to solutions of the original KS equation on their interval of existence, with an explicit algebraic convergence rate.

	For three and higher dimensions, the available theory is substantially more limited. Biswas and Swanson \cite{BiswaSwanson2007} established short-time existence of smooth solutions in \(\mathbb R^d\), together with generalized Gevrey regularity. In the periodic setting, local well-posedness in \(H^1\) for \(d=2,3\) has also been established, while further regularity results have been developed in both scalar and vector formulations. In particular, Larios, Rahman, and Yamazaki \cite{LariosRahmanYamazaki2022} derived several conditional regularity criteria for the two- and three-dimensional KS equations. Their criteria are formulated in terms of integrability properties of the scalar solution, the associated vector field, its divergence, and individual components of the vector field and its gradient.

	Despite these developments, general global well-posedness for the standard multidimensional KS equation remains open for \(d\ge2\) in the periodic setting and in \(\mathbb R^d\). This includes, in particular, the unresolved problem of global regularity for arbitrary smooth initial data in two and three dimensions. The difficulty is not simply caused by the presence of the backward-diffusion term. Rather, the fourth-order dissipation provides strong control of high frequencies, while the main obstruction is associated with the possible growth of the large-scale modes and the absence of a suitable cancellation or maximum principle.

	Thus, the existing literature reveals a striking dimensional transition: the one-dimensional KS equation possesses a comprehensive theory of global well-posedness and long-time dynamics, whereas in dimensions \(d\ge2\) global solvability is known only in particular regimes, such as thin or anisotropic domains, small-data settings, restricted unstable-mode configurations, or suitably modified/advection-enhanced models. At the same time, local existence, Gevrey and analytic regularity, conditional regularity criteria, and various dynamical properties have been established for multidimensional equations. This gap between the strong dissipative character of the KS equation and the absence of a general global theory continues to motivate the development of new analytical methods for multidimensional KS equations.

	These developments
	suggest that introducing suitable additional structure into the
	multidimensional KS equation may provide a way to recover coercive estimates
	that are unavailable for the unconstrained equation. This motivates
	the study of constrained KS equation models, where the constraint is designed
	to modify the energy structure and provide additional control of the
	nonlinear terms, with the ultimate goal of establishing global
	well-posedness.
	
		\subsection{Constrained PDEs}  A broad class of constrained evolution equations has been developed in which the dynamics are required to remain on prescribed manifolds, most notably on spheres associated with conserved or prescribed norms. Such constraints arise naturally in geometric flows, nonlinear diffusion equations, fluid models, and stochastic partial differential equations. In the deterministic parabolic setting, Rybka \cite{PR-06} studied heat flows on manifolds and their long-time behavior. Caffarelli and Lin \cite{LC+FL-09} considered nonlocal heat flows incorporating an $L^2$-energy-preserving mechanism and established well-posedness together with convergence results in the setting of singularly perturbed systems. The analysis of $L^2$-norm-preserving dynamics was further developed by Ma and Cheng \cite{LM+LC-13}, who investigated global existence, stability, asymptotic behavior, and gradient estimates for such flows on manifolds. Brze\'zniak and Hussain \cite{ZB+JH-24} studied constrained nonlinear heat equations and established global well-posedness, invariance of the $L^2$-sphere, and an underlying gradient-flow structure.
		
		These ideas have also been developed for nonlinear parabolic equations involving nonlocal terms specifically designed to preserve the $L^2$-norm. Antonelli et al. \cite{PA+PC+BS-24} considered such equations both on bounded domains and in the whole space, proving local and global well-posedness and obtaining detailed information on their long-time behavior. In particular, they established convergence to stationary solutions and, for positive initial data on a ball, strong convergence to the corresponding ground state. Shakarov \cite{BS-25} subsequently investigated another class of nonlinear parabolic equations with a nonlocal $L^2$-norm-preserving term, focusing on well-posedness and convergence toward stationary states. Constrained nonlinear heat equations with polynomial nonlinearities were further studied by Bawalia et al. \cite{AB+ZB+MTM}, who proved global existence and uniqueness of strong solutions, invariance of the $L^2$-unit sphere, and strong convergence of positive solutions to the unique positive ground state. In a related direction, Bawalia et al. \cite{AB+ZB+MTM+PR} incorporated the \L{}ojasiewicz--Simon gradient inequality into the analysis of nonlinear heat equations on Hilbert manifolds with finite-codimensional constraints, providing a framework for studying convergence toward stationary states; see also \cite{PR-06}. Brze\'zniak et al. \cite{ZB+GD+MM-18} investigated constrained two-dimensional Navier--Stokes equations on $\mathbb{R}^2$ and $\mathbb{T}^2$, with particular emphasis on global well-posedness and the vanishing-viscosity limit. More recently, \cite{ABH-26} considered a sphere-constrained modified Swift--Hohenberg equation and established global well-posedness, energy dissipation, the existence of restricted global attractors, and properties of the associated equilibrium set.

		The incorporation of constraints into stochastic evolution equations leads to additional analytical challenges, since the noise must be compatible with the underlying manifold structure. Brze\'zniak and Hussain \cite{ZB+JH-20} initiated the study of constrained stochastic nonlinear heat equations driven by Stratonovich-type noise and established existence and uniqueness of mild solutions in smooth bounded two-dimensional domains. This theory was subsequently extended by Bawalia et al. \cite{AB+ZB+MTM2} to arbitrary spatial dimensions and polynomial nonlinearities of order $2\leq p<\infty$. Related constrained stochastic fluid models were considered by Brze\'zniak et al. \cite{ZB+GD-21}, who studied two-dimensional stochastic Navier--Stokes equations with multiplicative Gaussian noise in Stratonovich form. They constructed martingale solutions, established pathwise uniqueness, and obtained strong solutions through a Yamada--Watanabe-type argument. 
		
		Constrained dynamics have also been investigated in the stochastic damped-wave setting, where both the geometric constraint and the singular small-mass limit have to be taken into account. In the work \cite{HT+MTM},  we established global well-posedness and investigated the long-time dynamics of an \(L^2\)-constrained damped nonlinear wave equation, including the existence of infinitely many stationary solutions and convergence of trajectories toward stationary states. Brze\'zniak and Cerrai \cite{ZB+SC-25} established existence and uniqueness for constrained stochastic nonlinear damped-wave equations perturbed by multiplicative Gaussian noise. They also investigated the asymptotic behavior of the solutions through a Smoluchowski--Kramers approximation, identifying the limiting dynamics in the regime in which the mass parameter tends to zero. Cerrai and Xie \cite{SC+MX-25} subsequently extended this framework to a more general class in which the diffusion coefficient depends additionally on the velocity variable. They established the corresponding well-posedness results and analyzed the limiting behavior as the perturbed mass vanishes.
		
		Taken together, these works illustrate that norm-preserving and manifold-valued constraints provide a unified framework for a broad range of deterministic and stochastic evolution equations. The existing literature addresses global solvability, invariance of the constraint manifold, stability, asymptotic behavior, gradient structures, attractors, and singular limits across several classes of equations. Nevertheless, the interaction between such geometric constraints and higher-order nonlinear dissipative equations presents additional difficulties, particularly when the unconstrained equation itself has a delicate global theory. This provides a natural motivation for studying constrained higher-order evolution equations within an appropriate Hilbert-manifold framework.

		\subsection{Difficulties, approaches and novelties} The KS equation arises naturally in the description of the dynamics of planar flame fronts \cite{Sivashinsky1977}, making the one- and two-dimensional settings particularly relevant from a physical perspective. In this context, periodic boundary conditions provide a natural idealization of the underlying spatial structure and are therefore adopted throughout the analysis. Although other configurations, including channel-type geometries, are also of physical interest, we restrict our attention here to problems posed on periodic domains.
		
		\subsubsection{Motivation}
		Let $(\mathcal{H},\|\cdot\|_{\mathcal{H}})$ be a Hilbert space with inner product
		$(\cdot,\cdot)_{\mathcal{H}}$, and let
		\begin{align*}
			\mathcal{M}=\{h\in\mathcal{H}:\|h\|_{\mathcal{H}}=1\}
		\end{align*}
		denote the unit sphere in $\mathcal{H}$. For $h\in\mathcal{M}$, its tangent space is
		\begin{align*}
			T_h\mathcal{M}
			=\{z\in\mathcal{H}:(h,z)_{\mathcal{H}}=0\}.
		\end{align*}
		Let $F:D(F)\subset\mathcal{H}\to\mathcal{H}$ be a vector field and consider
		\begin{equation}
			\left\{
			\begin{aligned}
				\frac{du(t)}{dt}&=F(u(t)), \ \  t>0,\\
				u(0)&=u_0.
			\end{aligned}
			\right.
		\end{equation}
		Even if this problem admits a unique global solution, the corresponding flow need not preserve
		$\mathcal{M}$, since $F$ may not be tangent to $\mathcal{M}$; that is,
	$
		F(h)\notin T_h\mathcal{M}
	$
		in general. The sphere is invariant only if
	$
		F(h)\in T_h\mathcal{M}$ whenever $h\in D(F)\cap\mathcal{M}.
	$
		To enforce this tangency condition, we project $F(h)$ onto $T_h\mathcal{M}$. Let us define
		\begin{align*}
			\pi_h(z):=z-(h,z)_{\mathcal{H}}h,
			\ \ z\in\mathcal{H},
		\end{align*}
		and set
		\begin{align*}
			\widetilde{F}(h):=\pi_h(F(h))
			=F(h)-(h,F(h))_{\mathcal{H}}h,
			\ \  h\in D(F)\cap\mathcal{M}.
		\end{align*}
		Then
	$
		(\widetilde{F}(h),h)_{\mathcal{H}}=0,
	$
		so that $\widetilde{F}(h)\in T_h\mathcal{M}$. Hence, the modified evolution
		\begin{align}\label{eqn-2}
			\begin{cases}
				\displaystyle\frac{du(t)}{dt}=\widetilde{F}(u(t)), & t>0,\\
				u(0)=u_0,
			\end{cases}
		\end{align}
		is tangent to $\mathcal{M}$ and therefore preserves the constraint
		$\|u(t)\|_{\mathcal{H}}=1$ whenever $u_0\in\mathcal{M}$.
		
		If $D(F)=\mathcal{H}$ and $F$ is locally Lipschitz continuous, then $\widetilde{F}$ is also locally Lipschitz continuous, and \eqref{eqn-2} admits a unique local-in-time solution. Moreover, for $u_0\in\mathcal{M}$, the solution remains on $\mathcal{M}$ throughout its interval of existence.
		
		\subsubsection{One-dimensional constrained KS equation}
	We first consider the one-dimensional problem \eqref{eq:1d-v-KS equation}. 	To this end, denote  the nonlinear mapping $F: H^4(\mathbb T_L)\to L^2(\mathbb T_L)$ by 
	$$F(v)=-v_{xxxx}-v_{xx}-vv_x.$$ 
	Within this framework, the modified (projected) vector field $\widetilde{F}(v)$, obtained by projecting $F(v)$ onto the tangent space of the unit sphere at $v$, is expressed as:
	\begin{align}\label{eqn-2-1}
		\widetilde{F}(v)& = \pi_v[F(v)] = -v_{xxxx}-v_{xx}-vv_x - (v,-v_{xxxx}-v_{xx}-vv_x)_{L^2} v 
		\nonumber\\&=-v_{xxxx}-v_{xx}-vv_x +(\|v_{xx}\|_{L^2}^2-\|v_{x}\|_{L^2}^2)v,
	\end{align}
	since $(v,vv_x)=0$ by integration by parts. 
	Thus the initial value problem  \eqref{eq:1d-v-KS equation} reduces to 
		\begin{equation}\label{eq:constrained-v}
		\left\{
		\begin{aligned}
			v_t+v_{xxxx}+v_{xx}+vv_x&=\lambda_v(t)v,
			&& (x,t)\in\mathbb T_L\times(0,T),\\
			v(x,0)&=v_0(x),
			&&x\in\mathbb T_L,
		\end{aligned}
		\right.
	\end{equation}
	where
	\begin{equation}\label{eq:lambda-v}
		\lambda_v(t)
		=
		\|v_{xx}(t)\|_{L^2}^2-\|v_x(t)\|_{L^2}^2, \ t\in(0,T)
	\end{equation}
	and 
	\begin{equation}
		v_0\in\mathcal{M}=\left\{v\in L^2(\mathbb T_L):\|v\|_{L^2}=1\right\}.
	\end{equation}

It is well known that the one-dimensional KS equation admits global solutions. Motivated by this fact, we establish the global solvability of the constrained one-dimensional KS equation \eqref{eq:constrained-v}. To this end, we employ a Fourier--Galerkin approximation. We first establish the local existence and uniqueness of solutions to the resulting finite-dimensional Galerkin system. We then derive uniform a priori energy estimates, with Agmon's inequality playing a crucial role in controlling the nonlinear terms and obtaining bounds that are independent of the Galerkin approximation. By the Banach--Alaoglu theorem, we extract weakly convergent subsequences from the sequence of approximate solutions. The Aubin--Lions--Simons compactness lemma then yields strong convergence in the appropriate function spaces, allowing us to pass to the limit in the Galerkin formulation and obtain a global solution. Finally, we verify that both the Galerkin approximations and their limit remain on the manifold \(\mathcal{M}\). This preservation of the constraint, together with the established regularity, leads to the uniqueness of the resulting strong solution.

	\subsubsection{Two- and three-dimensional constrained KS equation}
	Let us now consider the multidimensional problem \eqref{eq:vector-KS equation} and define the nonlinear mapping $F: H^4(\mathbb T_L^d;\mathbb R^d)\to L^2(\mathbb T_L^d;\mathbb R^d),$  $d\in\{2,3\}$ by  
	\begin{align*}
	F(\boldsymbol{v})=	-\Delta^2\boldsymbol{v}-\Delta \boldsymbol{v}-(\boldsymbol{v}\cdot\nabla)\boldsymbol{v}.
	\end{align*}
	To ensure that the dynamics remain tangent to the unit sphere, we replace \(F(\boldsymbol{v})\) by its orthogonal projection onto the tangent space at \(\boldsymbol{v}\), denoted by \(\widetilde{F}(\boldsymbol{v})\), which is given by
	\begin{align}
		\widetilde{F}(\boldsymbol{v})& = \pi_{\boldsymbol{v}}[F(\boldsymbol{v})]=-\Delta^2\boldsymbol{v}-\Delta \boldsymbol{v}-(\boldsymbol{v}\cdot\nabla)\boldsymbol{v}-(\boldsymbol{v},-\Delta^2\boldsymbol{v}-\Delta \boldsymbol{v}-(\boldsymbol{v}\cdot\nabla)\boldsymbol{v})_{L^2}\boldsymbol{v}
		\nonumber\\&=-\Delta^2\boldsymbol{v}-\Delta \boldsymbol{v}-(\boldsymbol{v}\cdot\nabla)\boldsymbol{v}+\left(\|\Delta\boldsymbol{v}\|_{L^2}^2-\|\nabla\boldsymbol{v}\|_{L^2}^2-\frac12
		\int_{\mathbb T_L^d}
		|\boldsymbol{v}|^2\operatorname{div}\boldsymbol{v} dx\right)\boldsymbol{v},
	\end{align}
	where we have performed integration by parts. Thus, the initial value problem \eqref{eq:vector-KS equation} can be reformulated as
		\begin{equation}\label{eq:KSE}
		\begin{cases}
			\boldsymbol{v}_t+\Delta^2\boldsymbol{v}+\Delta \boldsymbol{v}+(\boldsymbol{v}\cdot\nabla)\boldsymbol{v}=\lambda_{\boldsymbol{v}}(t)\boldsymbol{v},
			& (x,t)\in\mathbb T_L^d\times(0,T),\\
			\boldsymbol{v}(0)=\boldsymbol{v}_0,\ x\in\mathbb T_L^d,
		\end{cases}
	\end{equation}
	where
	\begin{equation}\label{eq:lambda-1}
		\lambda_{\boldsymbol{v}}(t)
		=
		\norm{\Delta \boldsymbol{v}(t)} _{L^2}^2-\norm{\nabla \boldsymbol{v}(t)} _{L^2}^2
		-
		\frac12
		\int_{\mathbb T_L^d}
		|\boldsymbol{v}(t)|^2\operatorname{div}\boldsymbol{v}(t) dx
	\end{equation}
	and 
		\begin{equation}
			\boldsymbol{v}_0\in\mathcal{M}=\left\{\boldsymbol{v}\in L^2(\mathbb T_L^d;\mathbb R^d):\|\boldsymbol{v}\|_{L^2}=1\right\}.
		\end{equation}

	The global well-posedness of the unconstrained KS equation \eqref{eq:vector-KS equation} in two and three spatial dimensions remains an open problem. We establish the existence and uniqueness of a maximal strong solution to the constrained two- and three-dimensional KS equation \eqref{eq:KSE}, in the sense of Definition \ref{def-strong-m}, up to a maximal time \(T_{\max}\). The local well-posedness is obtained by combining suitable semigroup estimates with the Banach contraction principle.
	
		In the \emph{two-dimensional case}, we further prove that the maximal solution extends globally in time. A crucial feature of the constrained problem is that the \(L^2\)-norm is preserved along the evolution. Indeed, taking the \(L^2\)-inner product of the equation with \(\boldsymbol{v}\) and using the constraint yields the conservation of the \(L^2\)-energy, without requiring an estimate of the nonlinear term
	$ \bigl((\boldsymbol{v}\cdot\nabla)\boldsymbol{v},\boldsymbol{v}\bigr). $
	However, to obtain higher-order a priori estimates, we take the inner product of the equation with \(\boldsymbol{v}_t\), which leads to the nonlinear term
	$ \bigl((\boldsymbol{v}\cdot\nabla)\boldsymbol{v},\boldsymbol{v}_t\bigr). $
	We carefully estimate this term by exploiting Agmon's inequality together with the \(L^2\)-constraint. These estimates provide the necessary control of the higher-order norms and rule out finite-time blow-up. Consequently, the maximal solution in two dimensions can be extended globally in time, yielding the global well-posedness of the constrained two-dimensional KS equation.
	
	The main novelties of this work can be summarized as follows:
	\begin{itemize}
		\item Global well-posedness of the \(L^2\)-constrained one-dimensional KS equation \eqref{eq:constrained-v} (Theorem \ref{thm-one-d-global}).
	\item 	Local well-posedness of the \(L^2\)-constrained two- and three-dimensional KS equations \eqref{eq:KSE} (Theorem \ref{thm-local}).
	\item 	Global well-posedness of the \(L^2\)-constrained two-dimensional KS equation (Theorem \ref{thm-main}).
	\end{itemize}
	The third contribution provides a partial resolution, within the constrained setting, of the well-known open problem of global well-posedness for the two-dimensional KS equation. We point out that the additional Lagrange multiplier appearing in	\eqref{eq:KSE} introduces further difficulties in establishing global well-posedness. These difficulties can be handled in two spatial dimensions, but remain unresolved in three dimensions, particularly in the presence of the nonlinear term $(\boldsymbol{v}\cdot\nabla)\boldsymbol{v}$ (Remark \ref{rem-3D}).
	
	\subsection{Main results}
We present the main results of this work in this section. We begin by introducing the notion of a strong solution to problem \eqref{eq:constrained-v}.

	\begin{definition}\label{def-strong}
		Let $T>0$ and let
		\begin{equation*}
			v_0\in H^4(\mathbb T_L)\cap\mathcal{M}.
		\end{equation*}
		Then a function
		\begin{equation*}
			v \in
			C([0,\infty);H^2(\mathbb T_L)\cap\mathcal{M})
			\cap
			H^1_{\mathrm{loc}}
			(0,\infty;L^2(\mathbb T_L))\cap L^2_{\mathrm{loc}}
			(0,\infty;H^4(\mathbb T_L))
		\end{equation*}
		is called \emph{strong solution} if it satisfies the following:
		\begin{itemize}
			\item $v_t\in L^2_{\mathrm{loc}}(0,\infty;L^2(\mathbb T_L))$ and	$\mathcal{N}(v)\in L^2_{\mathrm{loc}}(0,\infty;L^2(\mathbb T_L)),$
			where
			\begin{equation*}
				\mathcal{N}(v)=
				-v_{xxxx}-v_{xx}-vv_x +(\|v_{xx}\|_{L^2}^2-\|v_{x}\|_{L^2}^2) v.
			\end{equation*}
			\item $v_t=\mathcal{N}(v) \ \text{ in }\   L^2(\mathbb T_L)\ \text{ for a.e. }\ t\in (0,T).$\vspace{0.2cm}
			\item The initial condition is prescribed by
			\begin{equation*}
				v(0)=v_0\ \text{ in }\ H^2(\mathbb T_L). 
			\end{equation*}
		\end{itemize}
	\end{definition}

	The following result establishes the existence and uniqueness of a strong solution to the \(L^2\)-constrained one-dimensional KS equation \eqref{eq:constrained-v}.

	\begin{theorem}\label{thm-one-d-global}
		Let $\mathbb{T}_L=\mathbb{R}/(L\mathbb{Z})$ and let
		\begin{align*}
		v_0\in H^2(\mathbb{T}_L)\cap\mathcal{M}.
		\end{align*}
		%		\begin{equation}\label{eq:constrained-KS}
			%			\left\{
			%			\begin{aligned}
				%				u_t+u_{xxxx}+u_{xx}+uu_x&=\lambda(t)u,\ \ 
				%				 (x,t)\in\mathbb{T}_L\times(0,\infty),\\
				%				u(x,0)&=u_0(x),\ \ 
				%				x\in\mathbb{T}_L,
				%			\end{aligned}
			%			\right. 
			%		\end{equation}
		%		where
		%		\begin{equation}\label{eq:lambda}
			%			\lambda(t)
			%			=
			%			\|u_{xx}(t)\|_{L^2}^2-	\|u_x(t)\|_{L^2}^2. 
			%		\end{equation}
		Then $L^2$-constrained KS equation  \eqref{eq:constrained-v}  admits a unique global strong solution
		\begin{align*}
		v\in
		C([0,\infty);H^2(\mathbb{T}_L))
		\cap
		H^1_{\mathrm{loc}}(0,\infty;L^2(\mathbb{T}_L))\cap L^2_{\mathrm{loc}}(0,\infty;H^4(\mathbb{T}_L)). 
		\end{align*}
		Moreover, the following invariant condition is satisfied: 
		\begin{align*}
		\|v(t)\|_{L^2(\mathbb{T}_L)}=1
		\ \ \text{ for all }\ t\geq0.
		\end{align*}
	\end{theorem}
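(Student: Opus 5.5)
We argue through a Fourier--Galerkin scheme, as announced in the introduction. Let $P_n$ denote the $L^2$-orthogonal projection onto $\mathrm{span}\{e^{2\pi i kx/L}:|k|\le n\}$. We consider
\begin{align*}
v^n_t+v^n_{xxxx}+v^n_{xx}+P_n(v^nv^n_x)=\lambda_{v^n}(t)v^n,\qquad v^n(0)=\frac{P_nv_0}{\|P_nv_0\|_{L^2}},
\end{align*}
which is well defined for $n$ large, since $P_nv_0\to v_0\neq0$. The right-hand side is a locally Lipschitz vector field on a finite-dimensional space, so we obtain a unique local solution. Testing with $v^n$ and using $(P_n(v^nv^n_x),v^n)=(v^nv^n_x,v^n)=0$ gives
\begin{align*}
\frac12\frac{d}{dt}\|v^n\|_{L^2}^2=\lambda_{v^n}\bigl(\|v^n\|_{L^2}^2-1\bigr).
\end{align*}
Hence $\|v^n(t)\|_{L^2}=1$ by uniqueness for this linear ODE in $\|v^n\|_{L^2}^2-1$. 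In particular the Galerkin solutions stay on $\mathcal{M}$, which gives a uniform $L^2$ bound.

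\textbf{Main step: a uniform $H^2$ bound.} Testing with $v^n_t$, the Lagrange term becomes $\lambda_{v^n}(v^n,v^n_t)=\frac{\lambda_{v^n}}2\frac{d}{dt}\|v^n\|^2_{L^2}=0$ by the constraint. This leaves
\begin{align*}
\|v^n_t\|_{L^2}^2+\frac12\frac{d}{dt}\bigl(\|v^n_{xx}\|_{L^2}^2-\|v^n_x\|_{L^2}^2\bigr)=-(v^nv^n_x,v^n_t).
\end{align*}
We bound the nonlinear term by $\|v^n\|_{L^\infty}\|v^n_x\|_{L^2}\|v^n_t\|_{L^2}$. Agmon's inequality gives $\|v\|_{L^\infty}\le C\|v\|_{L^2}^{1/2}\|v\|_{H^1}^{1/2}$, and interpolation gives $\|v_x\|_{L^2}\le\|v\|_{L^2}^{1/2}\|v_{xx}\|_{L^2}^{1/2}$. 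Together with $\|v^n\|_{L^2}=1$, this yields
\begin{align*}
|(v^nv^n_x,v^n_t)|\le\frac12\|v^n_t\|^2_{L^2}+C\bigl(1+\|v^n_{xx}\|_{L^2}^{2}\bigr).
\end{align*}
The energy $E=\|v_{xx}\|^2-\|v_x\|^2\ge\frac12\|v_{xx}\|^2-C$ is coercive. Gronwall's inequality then gives a bound on $\sup_{[0,T]}\|v^n_{xx}\|_{L^2}$ and on $\int_0^T\|v^n_t\|^2_{L^2}$ depending only on $T$ and $\|v_0\|_{H^2}$, since $\|P_nv_0\|_{H^2}\le C\|v_0\|_{H^2}$. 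Consequently $\lambda_{v^n}$ is bounded in $L^\infty(0,T)$. Reading the equation as $v^n_{xxxx}=-v^n_t-v^n_{xx}-P_n(v^nv^n_x)+\lambda_{v^n}v^n$, we also get $v^n$ bounded in $L^2(0,T;H^4)$. This is the step I expect to be the main obstacle: the sign-indefinite $\lambda$ term is only harmless because of the exact cancellation $(v,v_t)=0$, and the nonlinear bound must be kept at most linear in the energy so that Gronwall closes without finite-time blow-up.

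\textbf{Passage to the limit.} By Banach--Alaoglu we extract $v^n\rightharpoonup^* v$ in $L^\infty(0,T;H^2)$, $v^n\rightharpoonup v$ in $L^2(0,T;H^4)$, and $v^n_t\rightharpoonup v_t$ in $L^2(0,T;L^2)$. Aubin--Lions--Simon gives $v^n\to v$ strongly in $C([0,T];H^1)\cap L^2(0,T;H^3)$. Strong $L^2(0,T;H^2)$ convergence gives $\lambda_{v^n}\to\lambda_v$ in $L^1(0,T)$, and $v^n\to v$ in $C([0,T];L^2)$, so $\lambda_{v^n}v^n\to\lambda_vv$ in $L^1(0,T;L^2)$. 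Similarly $P_n(v^nv^n_x)\to vv_x$. Hence $v$ solves the equation in $L^2$ for a.e.\ $t$, and $\|v(t)\|_{L^2}=1$ follows from uniform convergence. Continuity $v\in C([0,T];H^2)$ follows from the Lions--Magenes lemma, since $v\in L^2(0,T;H^4)$ and $v_t\in L^2(0,T;L^2)$. Because $T$ is arbitrary, a diagonal argument gives a global solution.

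\textbf{Uniqueness.} Let $w=v_1-v_2$ for two strong solutions with the same initial data. Testing the difference equation with $w$ gives
\begin{align*}
\frac12\frac{d}{dt}\|w\|^2+\|w_{xx}\|^2-\|w_x\|^2+(v_1w_x+wv_{2,x},w)=\lambda_1\|w\|^2+(\lambda_1-\lambda_2)(v_2,w).
\end{align*}
Both solutions lie on $\mathcal{M}$, so $(v_2,w)=-\frac12\|w\|^2$; hence the last term equals $-\frac12(\lambda_1-\lambda_2)\|w\|^2$ and contains no derivatives of $w$. The convective terms are controlled by $C(\|v_1\|_{H^2}+\|v_2\|_{H^2})\|w\|^2$. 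We then have
\begin{align*}
\frac{d}{dt}\|w\|^2\le g(t)\|w\|^2,\qquad g\in L^1(0,T),
\end{align*}
where $g$ depends on $\|v_i\|_{H^2}$ and $|\lambda_i|$. Gronwall's inequality gives $w\equiv0$.
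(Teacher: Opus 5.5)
Your proposal is correct and follows essentially the paper's route: Fourier--Galerkin with normalized data $P_Nv_0/\|P_Nv_0\|_{L^2}$, preservation of $\|v^N\|_{L^2}=1$ via the linear ODE, testing with $v^N_t$ so the Lagrange term vanishes, Agmon plus $\|v_x\|_{L^2}^2\le\|v_{xx}\|_{L^2}$ against the coercive energy, Aubin--Lions--Simon, identification of $\lambda$ by strong $L^2(0,T;H^2)$ convergence, and Lions--Magenes. Closing with linear Gronwall instead of the paper's sublinear inequality $F_N'\le C(1+F_N)^{3/4}$ is immaterial. The only genuine variation is in uniqueness. There your identity $(v_2,w)=-\tfrac12\|w\|_{L^2}^2$, valid for two solutions on $\mathcal{M}$, replaces the paper's Lipschitz bound $|\lambda_{v_1}-\lambda_{v_2}|\le C_T(\|w_x\|_{L^2}+\|w_{xx}\|_{L^2})$ and its absorption into $\|w_{xx}\|_{L^2}^2$. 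This is a slightly cleaner use of the constraint that only needs $\lambda_{v_i}\in L^\infty(0,T)$.
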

	
	The proof of Theorem \ref{thm-one-d-global} is given in Section \ref{sec3}.
		
		Let us now provide the definition of a strong solution to problem \eqref{eq:KSE}.

	\begin{definition}\label{def-strong-m}
		Let $T>0$ and let
		\begin{equation*}
			\boldsymbol{v}_0\in H^2(\mathbb T_L^d;\mathbb R^d)\cap\mathcal{M}.
		\end{equation*}
		Then a function
		\begin{equation*}
			\boldsymbol{v} \in
			C([0,\infty);H^2(\mathbb T_L^d;\mathbb R^d))
			\cap
			H^1_{\mathrm{loc}}
			(0,\infty;L^2(\mathbb T_L^d;\mathbb R^d))\cap L^2_{\mathrm{loc}}
			(0,\infty;H^4(\mathbb T_L^d;\mathbb R^d))
		\end{equation*}
		is called \emph{strong solution} if it satisfies the following:
		\begin{itemize}
			\item $\boldsymbol{v}_t\in L^2_{\mathrm{loc}}(0,\infty;L^2(\mathbb T_L^d;\mathbb R^d))$ and	$\mathcal{N}(\boldsymbol{v})\in L^2_{\mathrm{loc}}(0,\infty;L^2(\mathbb T_L^d;\mathbb R^d)),$
			where
			\begin{equation*}
				\mathcal{N}(\boldsymbol{v})=-\Delta^2\boldsymbol{v}-\Delta \boldsymbol{v}-(\boldsymbol{v}\cdot\nabla)\boldsymbol{v}+\left(\|\Delta\boldsymbol{v}\|_{L^2}^2-\|\nabla\boldsymbol{v}\|_{L^2}^2-\frac12
				\int_{\mathbb T_L^d}
				|\boldsymbol{v}|^2\operatorname{div}\boldsymbol{v} dx\right)\boldsymbol{v}.
			\end{equation*}
			\item $\boldsymbol{v}_t=\mathcal{N}(\boldsymbol{v}) \ \text{ in }\   L^2(\mathbb T_L^d;\mathbb R^d)\ \text{ for a.e. }\ t\in (0,T).$\vspace{0.2cm}
			\item The initial condition is prescribed by
			\begin{equation*}
				\boldsymbol{v}(0)=\boldsymbol{v}_0\ \text{ in }\ H^2(\mathbb T_L^d;\mathbb R^d). 
			\end{equation*}
		\end{itemize}
	\end{definition}
	
	The following result guarantees the local existence and uniqueness of a strong solution to the \(L^2\)-constrained two- and three-dimensional KS equation \eqref{eq:KSE}.

		\begin{theorem}[Local well-posedness in dimensions $d=2,3$]\label{thm-local}
		Let $d\in\{2,3\}$ and let
		$
		\mathbb T_L^d=(\mathbb R/L\mathbb Z)^d.
		$
		For any
		$
		\boldsymbol{v}_0\in
		H^2(\mathbb T_L^d;\mathbb R^d)\cap\mathcal{M}.
		$
%		consider the initial value problem
%		\begin{equation}\label{eqn-equation-main}
%			\begin{cases}
%				\boldsymbol{v}_t+\Delta^2\boldsymbol{v}
%				+\Delta\boldsymbol{v}
%				+(\boldsymbol{v}\cdot\nabla)\boldsymbol{v}
%				=
%				\lambda_{\boldsymbol v}(t)\boldsymbol{v},
%				& (x,t)\in\mathbb T_L^d\times(0,T),\\begin{align*}1mm]
%				\boldsymbol{v}(x,0)=\boldsymbol{v}_0(x),
%				& x\in\mathbb T_L^d,
%			\end{cases}
%		\end{equation}
%		where $\lambda_{\boldsymbol v}$ is defined in \eqref{eqn-lambda-1}. 
%		
			Then there exists a time
		$
		T=T\bigl(\|\boldsymbol v_0\|_{H^2},L\bigr)>0
		$
		such that \eqref{eq:KSE} admits a unique solution satisfying
		\begin{align}\label{eqn-reg-1}
			\boldsymbol v\in
			C\bigl([0,T];H^2(\mathbb T_L^d;\mathbb R^d)\cap\mathcal{M}\bigr)
			\cap
			H^1\bigl(0,T;L^2(\mathbb T_L^d;\mathbb R^d)\bigr)
			\cap
			L^2\bigl(0,T;H^4(\mathbb T_L^d;\mathbb R^d)\bigr).
		\end{align}
		
		Moreover, for every $R>0$, there exists
		$
		T_R=T_R(L)>0
		$
		such that, whenever
		\begin{align*}
		\|\boldsymbol v_0\|_{H^2}
		+
		\|\boldsymbol w_0\|_{H^2}
		\le R,
		\end{align*}
		the corresponding solutions $\boldsymbol v$ and $\boldsymbol w$ exist on
		$[0,T_R]$ and satisfy
		\begin{equation}\label{eq:local-Lipschitz}
			\sup_{0\le t\le T_R}
			\|\boldsymbol v(t)-\boldsymbol w(t)\|_{H^2}
			\le
			C_R
			\|\boldsymbol v_0-\boldsymbol w_0\|_{H^2}.
		\end{equation}
		In particular, the solution map
		$$
		H^2(\mathbb T_L^d;\mathbb R^d)\ni	\boldsymbol v_0
		\mapsto
		\boldsymbol v\in C([0,T_R];H^2(\mathbb T_L^d;\mathbb R^d))
		$$
		is locally Lipschitz .
		
		Finally, if $T_{\max}$ denotes the maximal existence time of the
		solution, then
		\begin{align}\label{eqn-max}
			T_{\max}<\infty
			\quad\Longrightarrow\quad
			\limsup_{t\uparrow T_{\max}}
			\|\boldsymbol v(t)\|_{H^2}
			=\infty.
		\end{align}
		Equivalently, the solution can be continued beyond any finite time
		$T_{\max}$ as long as
		\begin{align*}
		\sup_{0\le t<T_{\max}}
		\|\boldsymbol v(t)\|_{H^2}<\infty.
		\end{align*}
	\end{theorem}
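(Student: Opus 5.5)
We will prove the theorem by a fixed point argument in the mild formulation, using the analytic semigroup generated by $-A$, where $A=\Delta^2+\Delta+cI$ on $L^2(\mathbb T_L^d;\mathbb R^d)$ with domain $H^4$. The shift $c>1/4$ is chosen so that $A$ is positive and self-adjoint; its symbol is $|k|^4-|k|^2+c\ge c-\tfrac14$. We rewrite \eqref{eq:KSE} as
\begin{align*}
\boldsymbol v(t)=e^{-tA}\boldsymbol v_0+\int_0^t e^{-(t-s)A}G(\boldsymbol v(s))\,ds,\qquad G(\boldsymbol v)=-(\boldsymbol v\cdot\nabla)\boldsymbol v+\bigl(\lambda_{\boldsymbol v}+c\bigr)\boldsymbol v .
\end{align*}
The natural working space is
\begin{align*}
X_T=C([0,T];H^2)\cap L^2(0,T;H^4)\cap H^1(0,T;L^2),
\end{align*}
with the norm $\|\boldsymbol v\|_{X_T}=\sup_t\|\boldsymbol v\|_{H^2}+\|\boldsymbol v\|_{L^2(H^4)}+\|\boldsymbol v_t\|_{L^2(L^2)}$. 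We work in the closed ball of radius $2C_0\|\boldsymbol v_0\|_{H^2}$ in $X_T$. The linear part is handled by maximal $L^2$-regularity for the analytic semigroup, which is explicit on the torus via Fourier multipliers:
\begin{align*}
\|e^{-tA}\boldsymbol v_0\|_{X_T}\le C_0\|\boldsymbol v_0\|_{H^2},\qquad \Bigl\|\int_0^\cdot e^{-(\cdot-s)A}f\,ds\Bigr\|_{X_T}\le C_0\|f\|_{L^2(0,T;L^2)} .
\end{align*}
Here $H^2=D(A^{1/2})$ is the trace space.

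\textbf{Nonlinear estimates.} The key step is to bound $\|G(\boldsymbol v)-G(\boldsymbol w)\|_{L^2(0,T;L^2)}$ by $C(R)\,T^{\theta}\|\boldsymbol v-\boldsymbol w\|_{X_T}$ for some $\theta>0$.

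For the transport term we use $H^2\hookrightarrow L^\infty$ (valid for $d\le3$). This gives
\begin{align*}
\|(\boldsymbol v\cdot\nabla)\boldsymbol v-(\boldsymbol w\cdot\nabla)\boldsymbol w\|_{L^2}\le C\bigl(\|\boldsymbol v\|_{H^2}+\|\boldsymbol w\|_{H^2}\bigr)\|\boldsymbol v-\boldsymbol w\|_{H^2},
\end{align*}
and integrating in time yields a factor $T^{1/2}\sup_t(\cdot)$.

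The multiplier is the delicate part, since $\lambda_{\boldsymbol v}$ contains $\|\Delta\boldsymbol v\|_{L^2}^2$, which is only bounded in time. For the difference we write
\begin{align*}
\lambda_{\boldsymbol v}\boldsymbol v-\lambda_{\boldsymbol w}\boldsymbol w=\lambda_{\boldsymbol v}(\boldsymbol v-\boldsymbol w)+(\lambda_{\boldsymbol v}-\lambda_{\boldsymbol w})\boldsymbol w .
\end{align*}
Both terms are controlled using
\begin{align*}
|\lambda_{\boldsymbol v}-\lambda_{\boldsymbol w}|\le C\bigl(1+\|\boldsymbol v\|_{H^2}+\|\boldsymbol w\|_{H^2}\bigr)^2\|\boldsymbol v-\boldsymbol w\|_{H^2},
\end{align*}
where the cubic term $\int|\boldsymbol v|^2\operatorname{div}\boldsymbol v$ is handled by Hölder together with $H^2\hookrightarrow L^\infty$. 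Since everything is bounded by $\sup_t\|\cdot\|_{H^2}$, the time integral again gains a factor $T^{1/2}$. The map is therefore a contraction for $T$ small depending only on $R=\|\boldsymbol v_0\|_{H^2}$ and $L$. Banach's fixed point theorem gives existence, and uniqueness in $X_T$ follows from the same estimate via Gronwall.

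\textbf{Constraint, Lipschitz dependence, blow-up.} The fixed point lies in $H^1(L^2)\cap L^2(H^4)$, so the map $t\mapsto\|\boldsymbol v(t)\|_{L^2}^2$ is absolutely continuous. Testing with $\boldsymbol v$ and integrating the cubic term by parts (exactly as in the derivation of \eqref{eq:lambda-1}) gives
\begin{align*}
\tfrac12\tfrac{d}{dt}\|\boldsymbol v\|_{L^2}^2=\bigl(\lambda_{\boldsymbol v}-\|\Delta\boldsymbol v\|^2+\|\nabla\boldsymbol v\|^2+\tfrac12\textstyle\int|\boldsymbol v|^2\operatorname{div}\boldsymbol v\bigr)\cdot 1+\lambda_{\boldsymbol v}\bigl(\|\boldsymbol v\|_{L^2}^2-1\bigr).
\end{align*}
The first bracket vanishes, so $y=\|\boldsymbol v\|_{L^2}^2-1$ satisfies the linear ODE $y'=2\lambda_{\boldsymbol v}y$ with $y(0)=0$ and $\lambda_{\boldsymbol v}\in L^\infty(0,T)$. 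Hence $y\equiv0$, which proves the invariance of $\mathcal M$. Note that this requires defining $G$ with the formula \eqref{eq:lambda-1} off the sphere.

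The Lipschitz estimate \eqref{eq:local-Lipschitz} follows from the same difference bounds, using a common time $T_R$ chosen for the ball of radius $R$. Since $T$ depends only on the $H^2$-norm of the data, the usual continuation argument gives \eqref{eqn-max}: if $\sup_{t<T_{\max}}\|\boldsymbol v\|_{H^2}$ were finite, one would restart from a time close to $T_{\max}$ with a uniform existence time and extend past $T_{\max}$.

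\textbf{Main obstacle.} The main obstacle is the nonlocal multiplier $\|\Delta\boldsymbol v\|^2\boldsymbol v$: it sits at the critical $H^2$ level and must be controlled in $X_T$ with a small factor in $T$. The plan relies on $\sup_t\|\boldsymbol v\|_{H^2}$ being bounded in the ball. If the trace-space constant is not uniform in $T$, the fallback is to fix $T\le1$ and obtain smallness from $\|e^{-tA}\boldsymbol v_0-\boldsymbol v_0\|$ instead.
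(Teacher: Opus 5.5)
Your proposal is correct and follows the same overall strategy as the paper. Both use a Banach fixed point for the Duhamel formulation resting on the local Lipschitz continuity of the nonlinearity from $H^2$ into $L^2$. Both use the linear ODE $y'=2\lambda_{\boldsymbol v}y$ for $y=\|\boldsymbol v\|_{L^2}^2-1$, with $\lambda_{\boldsymbol v}$ given by its formula off the sphere, as the paper does. Both close with the standard restart argument for the blow-up alternative.

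The genuine difference is the fixed-point space. The paper contracts in $C([0,T];H^2)$ only, using the smoothing bound $\|S_A(t)\boldsymbol u\|_{H^2}\le C_T t^{-1/2}\|\boldsymbol u\|_{L^2}$, so that $\int_0^t(t-s)^{-1/2}\,ds=2t^{1/2}$ produces the small factor. It then recovers $L^2(0,T;H^4)\cap H^1(0,T;L^2)$ in a separate step, through a Fourier energy estimate for $\boldsymbol v_t+B\boldsymbol v=G$ with $B=A+\ell I$. You instead build that maximal $L^2$-regularity estimate into the norm of $X_T$ and take the small factor from $\|f\|_{L^2(0,T;L^2)}\le T^{1/2}\sup_t\|f(t)\|_{L^2}$.

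What each route buys:
\begin{itemize}
\item Your route merges the paper's two steps. It gives the strong-solution regularity and the Lipschitz dependence in the full $X_T$-norm at no extra cost, and it places uniqueness directly in the class stated in the theorem. You still need the routine remark, needed in the paper too, that such solutions satisfy the Duhamel formula.
\item The paper's route needs only the elementary multiplier bound $(1+r)^2e^{-2(r^2-r)t}\le C_Tt^{-1}$ for the contraction. Its difference estimate also applies to any two mild solutions in $C([0,T];H^2)$.
\end{itemize}

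Your closing hedge about the trace-space constant is unnecessary. The shifted symbol is bounded below by $c-\tfrac14>0$, so a direct Fourier computation gives both linear bounds with constants independent of $T$.

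Note also that your $G$ carries $+(\lambda_{\boldsymbol v}+c)\boldsymbol v$, which is the sign dictated by the equation. The paper's $F$ is written with $-\lambda_{\boldsymbol v}\boldsymbol v$; this slip affects none of the estimates.
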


		The proof of Theorem \ref{thm-local} is provided in Section \ref{sec-4}.  The following result demonstrates the global existence and uniqueness of a strong solution to the \(L^2\)-constrained two-dimensional KS equation \eqref{eq:KSE}.

			\begin{theorem}\label{thm-main}
			Let
			$
			\mathbb T_L^2=(\mathbb R/L\mathbb Z)^2
			$
			and let
			\begin{align*}
			\boldsymbol{v}_0\in H^2(\mathbb T_L^2;\mathbb R^2)\cap\mathcal{M}. 
			\end{align*}
			Consider the equation \eqref{eq:KSE} for $d=2$.
%			\begin{equation}\label{eq:KSE}
%				\begin{cases}
%					\boldsymbol{v}_t+\Delta^2\boldsymbol{v}+\Delta \boldsymbol{v}+(\boldsymbol{v}\cdot\nabla)\boldsymbol{v}=\lambda_{\boldsymbol{v}}(t)\boldsymbol{v},
%					& (x,t)\in\mathbb T_L^2\times(0,\infty),\\
%					\boldsymbol{v}(0)=\boldsymbol{v}_0,\ x\in\mathbb T_L^2,
%				\end{cases}
%			\end{equation}
%			where
%			\begin{align*}
%			(\boldsymbol{v}\cdot\nabla)\boldsymbol{v}
%			=
%			v_1\partial_1\boldsymbol{v}+v_2\partial _{2}\boldsymbol{v}
%			\end{align*}
%			and
%			\begin{equation}\label{eq:lambda-1}
%				\lambda_{\boldsymbol{v}}(t)
%				=
%				\norm{\Delta \boldsymbol{v}(t)} _{L^2}^2-\norm{\nabla \boldsymbol{v}(t)} _{L^2}^2
%				-
%				\frac12
%				\int_{\mathbb T_L^2}
%				|\boldsymbol{v}(t)|^2\operatorname{div}\boldsymbol{v}(t) dx.
%			\end{equation}
			Then \eqref{eq:KSE} admits a unique global solution
			\begin{align*}
			\boldsymbol{v}\in
			C([0,\infty);H^2(\mathbb T_L^2;\mathbb R^2))
			\cap
			H^1_{\mathrm{loc}}
			(0,\infty;L^2(\mathbb T_L^2;\mathbb R^2))\cap L^2_{\mathrm{loc}}
			(0,\infty;H^4(\mathbb T_L^2;\mathbb R^2)).
			\end{align*}
			Moreover, the following invariance result holds: 
			\begin{align*}
			\norm{\boldsymbol{v}(t)} _{L^2}=1
			\ \text{ for every }\ t\geq0.
			\end{align*}
		\end{theorem}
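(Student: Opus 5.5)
The plan is to combine the local theory of Theorem \ref{thm-local} with a global-in-time a priori bound on $\|\boldsymbol v(t)\|_{H^2}$, and then use the blow-up alternative \eqref{eqn-max}. Let $\boldsymbol v$ be the maximal solution on $[0,T_{\max})$ given by Theorem \ref{thm-local}. It is unique, and on every $[0,T]\subset[0,T_{\max})$ it has the regularity \eqref{eqn-reg-1}. I will show that for every finite $T$,
\begin{align*}
\sup_{0\le t<\min\{T,T_{\max}\}}\|\boldsymbol v(t)\|_{H^2}\le C(T,L,\|\boldsymbol v_0\|_{H^2}).
\end{align*}
By \eqref{eqn-max} this forces $T_{\max}=\infty$. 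Uniqueness is inherited from the local result, or equivalently from \eqref{eq:local-Lipschitz} applied on successive intervals.

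\textbf{Step 1: invariance of the constraint.} Since $\boldsymbol v\in H^1(0,T;L^2)\cap L^2(0,T;H^4)$, the map $t\mapsto\|\boldsymbol v\|_{L^2}^2$ is absolutely continuous. A direct computation using \eqref{eq:lambda-1} gives $(F(\boldsymbol v),\boldsymbol v)_{L^2}=-\lambda_{\boldsymbol v}$. Hence
\begin{align*}
\frac{d}{dt}\|\boldsymbol v\|_{L^2}^2=2\lambda_{\boldsymbol v}(t)\bigl(\|\boldsymbol v\|_{L^2}^2-1\bigr).
\end{align*}
This is a linear ODE for $\|\boldsymbol v\|_{L^2}^2-1$ with locally integrable coefficient $\lambda_{\boldsymbol v}\in L^1_{\mathrm{loc}}$, because $\|\Delta\boldsymbol v\|_{L^2}^2$ is bounded on compact subintervals. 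Its initial value is zero, so $\|\boldsymbol v(t)\|_{L^2}=1$ for all $t$.

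\textbf{Step 2: energy identity from testing with $\boldsymbol v_t$.} Set
\begin{align*}
E(t)=\|\Delta\boldsymbol v(t)\|_{L^2}^2-\|\nabla\boldsymbol v(t)\|_{L^2}^2.
\end{align*}
By the Lions--Magenes lemma (justified by $\boldsymbol v_t\in L^2(L^2)$ and $\boldsymbol v\in L^2(H^4)$), we have
\begin{align*}
\frac{d}{dt}\|\Delta\boldsymbol v\|_{L^2}^2=2(\Delta^2\boldsymbol v,\boldsymbol v_t),\qquad \frac{d}{dt}\|\nabla\boldsymbol v\|_{L^2}^2=-2(\Delta\boldsymbol v,\boldsymbol v_t).
\end{align*}
Testing \eqref{eq:KSE} with $\boldsymbol v_t$ and using $(\boldsymbol v,\boldsymbol v_t)=\frac12\frac{d}{dt}\|\boldsymbol v\|_{L^2}^2=0$ from Step 1, the Lagrange multiplier term drops out. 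We obtain
\begin{align*}
\|\boldsymbol v_t\|_{L^2}^2+\frac12\frac{d}{dt}E(t)=-\bigl((\boldsymbol v\cdot\nabla)\boldsymbol v,\boldsymbol v_t\bigr)\le\frac12\|\boldsymbol v_t\|_{L^2}^2+\frac12\|\boldsymbol v\|_{L^\infty}^2\|\nabla\boldsymbol v\|_{L^2}^2.
\end{align*}

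\textbf{Step 3: closing the estimate in $d=2$.} This is the main point, and it is where the constraint and the dimension enter. The two-dimensional Agmon inequality on the torus, including the mean/low-mode contribution, combined with $\|\boldsymbol v\|_{L^2}=1$, gives $\|\boldsymbol v\|_{L^\infty}^2\le C(1+\|\Delta\boldsymbol v\|_{L^2})$. Interpolation gives $\|\nabla\boldsymbol v\|_{L^2}^2\le\|\boldsymbol v\|_{L^2}\|\Delta\boldsymbol v\|_{L^2}=\|\Delta\boldsymbol v\|_{L^2}$. Hence the right-hand side of Step 2 is at most $C(1+\|\Delta\boldsymbol v\|_{L^2}^2)$, which is only quadratic. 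Moreover,
\begin{align*}
\|\Delta\boldsymbol v\|_{L^2}^2=E+\|\nabla\boldsymbol v\|_{L^2}^2\le E+\tfrac12\|\Delta\boldsymbol v\|_{L^2}^2+\tfrac12,
\end{align*}
so $\|\Delta\boldsymbol v\|_{L^2}^2\le 2E+1$, and $E\ge -\frac14$ is bounded below. Therefore
\begin{align*}
E'(t)+\|\boldsymbol v_t\|_{L^2}^2\le C(1+E(t)),
\end{align*}
and Gronwall's inequality bounds $E$, hence $\|\boldsymbol v\|_{H^2}$, on any finite interval in terms of $\|\boldsymbol v_0\|_{H^2}$, $T$ and $L$. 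As a by-product we also get a bound on $\int_0^T\|\boldsymbol v_t\|_{L^2}^2\,dt$.

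\textbf{Step 4: the remaining regularity.} The $L^2(0,T;H^4)$ bound then follows from the equation itself. We estimate $\|\Delta^2\boldsymbol v\|_{L^2}\le\|\boldsymbol v_t\|_{L^2}+\|\Delta\boldsymbol v\|_{L^2}+\|\boldsymbol v\|_{L^\infty}\|\nabla\boldsymbol v\|_{L^2}+|\lambda_{\boldsymbol v}|$. Here $|\lambda_{\boldsymbol v}|\le C(1+\|\boldsymbol v\|_{H^2}^3)$ is bounded by Step 3.

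The step I expect to be the genuine obstacle is Step 3, i.e. making the nonlinear term at most linear in $E$. In 3D the same argument gives $\|\boldsymbol v\|_{L^\infty}^2\lesssim\|\nabla\boldsymbol v\|_{L^2}\|\Delta\boldsymbol v\|_{L^2}\lesssim\|\Delta\boldsymbol v\|_{L^2}^{3/2}$. This produces a superlinear power $\|\Delta\boldsymbol v\|_{L^2}^{5/2}$, and the Gronwall argument fails. That is consistent with the remark that the method does not extend to $d=3$.
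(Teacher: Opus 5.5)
Your proposal is correct and follows essentially the same route as the paper: the local theory with the blow-up alternative, invariance of the $L^2$-sphere via the linear ODE for $\|\boldsymbol v\|_{L^2}^2-1$, and testing with $\boldsymbol v_t$ so that the Lagrange multiplier drops out; then the two-dimensional Agmon inequality together with $\|\nabla\boldsymbol v\|_{L^2}^2\le\|\Delta\boldsymbol v\|_{L^2}$ makes the nonlinear term quadratic in $\|\Delta\boldsymbol v\|_{L^2}$, followed by Gronwall and recovery of the $L^2(0,T;H^4)$ bound from the equation. The differences are only cosmetic: you bound $\|\Delta\boldsymbol v\|_{L^2}^2\le 2E+1$ directly by Young's inequality where the paper completes the square with $F=E+\frac18$, and you justify the time derivatives of the norms explicitly via Lions--Magenes.
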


	The proof of Theorem \ref{thm-main} is deferred to Section \ref{sec-5}.

		\subsection{Organization of the paper} The remainder of the paper is organized as follows. In Section \ref{sec-2}, we	introduce the functional framework and collect the preliminary analytical	results needed throughout the paper. In particular, we define the linear	operator associated with the constrained KS equation and	show that it generates an analytic $C_0$-semigroup (Proposition \ref{prop-analytic}). We then establish the	relevant semigroup estimates and properties that will be used in the	subsequent analysis. We also prove the local Lipschitz continuity of the	nonlinear operator associated with the equation.
		
		Section \ref{sec3} is devoted to the one-dimensional constrained	KS equation. We first establish, in Proposition	\ref{prop-equiv-1}, the equivalence between the constrained slope formulation and the	corresponding scalar formulation. We then prove the well-posedness of the	one-dimensional constrained KS equation in Theorem	\ref{thm-one-d-global}. The proof is based on a Fourier--Galerkin approximation, followed	by uniform a priori estimates and a compactness argument to pass to the	limit.
		
		In Section \ref{sec-4}, we turn to the constrained KS	equation in two and three spatial dimensions. We begin by establishing the	equivalence between the constrained slope formulation and the scalar	formulation in the two-dimensional case. We then employ the Banach	contraction mapping theorem to establish the existence and uniqueness of a	local maximal solution to problem \eqref{eq:KSE} (Theorem \ref{thm-local}). 
		
		Finally, in Section \ref{sec-5}, by exploiting the geometric constraint together with Agmon's	inequality, we derive the crucial a priori estimates that prevent finite-time 	blow-up and allow the local maximal solution to be extended globally. This	yields the global well-posedness of the constrained KS	equation in two  spatial dimensions (Theorem \ref{thm-main}).

	\section{Preliminaries}\label{sec-2}
	This section introduces the notation and fundamental concepts that will be employed throughout the subsequent analysis.
	
	\subsection{Functional setting}	In what follows, let
	\begin{align*}
	\mathbb T_L^d:=\left(\mathbb R/L\mathbb Z\right)^d,
	\ \  d\in\{1,2,3\},
	\end{align*}
	denote the $d$-dimensional torus of period $L>0$. We identify
	$\mathbb T_L^d$ with the periodic box $[0,L]^d$, with opposite faces
	identified. Let
$
	x=(x_1,\ldots,x_d)\in\mathbb T_L^d
$
	denote the spatial variable. We consider the vector field
	\begin{align*}
	\boldsymbol{v}=\nabla\phi
	=
	\left(
	\frac{\partial\phi}{\partial x_1},
	\ldots,
	\frac{\partial\phi}{\partial x_d}
	\right)^{\!\top},
	\end{align*}
	and hence
	$
	\boldsymbol{v}:\mathbb T_L^d\to\mathbb R^d.
	$
	In particular, for $d=1$, $\boldsymbol{v}=v_1$ is scalar-valued; for $d=2$,
$
	\boldsymbol{v}=(v_1,v_2)^\top;
$
	and for $d=3$,
	$
	\boldsymbol{v}=(v_1,v_2,v_3)^\top.
$
We denote by
$
L^2(\mathbb T_L^d;\mathbb R^d),
$
the space of (equivalence classes of) strongly measurable functions
$
\boldsymbol{u}:\mathbb T_L^d\to\mathbb R^d
$
such that
\begin{align*}
\int_{\mathbb T_L^d}|\boldsymbol{u}(x)|_{\mathbb R^d}^2 dx<\infty.
\end{align*}
Here, two functions $\boldsymbol{u},\boldsymbol{v}:\mathbb T_L^d\to\mathbb R^d$ are identified
whenever
$
\boldsymbol{u}(x)=\boldsymbol{v}(x)
\ \text{ for almost every }\ x\in\mathbb T_L^d.
$
For
$
\boldsymbol{u}=(u_1,\ldots,u_d)^\top,\ 
\boldsymbol{v}=(v_1,\ldots,v_d)^\top
\in L^2(\mathbb T_L^d;\mathbb R^d),
$ the space $L^2(\mathbb T_L^d;\mathbb R^d)$ is equipped with the inner
product
\begin{align*}
(\boldsymbol{u},\boldsymbol{v})_{L^2}
:=
\int_{\mathbb T_L^d}\boldsymbol{u}(x)\cdot \boldsymbol{v}(x) dx
=
\sum_{i=1}^d
\int_{\mathbb T_L^d}u_i(x)v_i(x) dx,
\end{align*}
and the corresponding norm
\begin{align*}
\|\boldsymbol{u}\|_{L^2}
:=
\left(
\int_{\mathbb T_L^d}|\boldsymbol{u}(x)|_{\mathbb R^d}^2 dx
\right)^{1/2}.
\end{align*} 
		Every $\boldsymbol{u}\in L^2(\mathbb T_L^d;\mathbb R^d)$ admits the Fourier
	representation 
	\begin{align*}
	\boldsymbol{u}(x)
	=
	\sum_{k\in\mathbb Z^d}
	\widehat{\boldsymbol{u}}_k
	e^{\,i\frac{2\pi}{L}k\cdot x},
	\ \ 
	\widehat{\boldsymbol{u}}_k\in\mathbb C^d,\  \text{ where }\ 
	\widehat{\boldsymbol{u}}_{-k}=\overline{\widehat{\boldsymbol{u}}}_k.
	\end{align*}
The conjugate-symmetry condition ensures that the Fourier series is
real-valued. The Parseval identity gives
	$$\|\boldsymbol{u}\|_{L^2}^2=L^d\sum_{k\in\mathbb{Z}^d}|\widehat{\boldsymbol{u}}_k|_{\mathbb{C}^d}^2.$$ 
	The Fourier modes are
$
	e_k(x)
	=
	e^{\,i\frac{2\pi}{L}k\cdot x},
	\ k\in\mathbb Z^d,
$
	and satisfy
	\begin{align*}
	-\Delta e_k
	=
	\mu_k e_k,
	\ \
	\mu_k
	=
	\frac{4\pi^2}{L^2}|k|^2.
	\end{align*}
The normalized coefficients associated with the orthonormal basis are $\widetilde{e}_k(x)=\frac{1}{L^{d/2}} e^{\,i\frac{2\pi}{L}k\cdot x},
\ k\in\mathbb Z^d.$	For $s\in\mathbb R$, we define the Sobolev space
$
	H^s(\mathbb T_L^d;\mathbb R^d)
$
	by
	\begin{align*}
	\|\boldsymbol{u}\|_{H^s(\mathbb T_L^d;\mathbb R^d)}^2=\|\boldsymbol{u}\|_{H^s}^2:
	=L^d
	\sum_{k\in\mathbb Z^d}
	\left(
	1+\frac{4\pi^2}{L^2}|k|^2
	\right)^s
	|\widehat{\boldsymbol{u}}_k|_{\mathbb C^d}^2.
	\end{align*}
	In particular,
	\begin{align*}
	H^1(\mathbb T_L^d;\mathbb R^d)
	=
	\left\{
	\boldsymbol{u}\in L^2(\mathbb T_L^d;\mathbb R^d):
	\nabla \boldsymbol{u}\in L^2(\mathbb T_L^d;\mathbb R^{d\times d})
	\right\}.
	\end{align*}
	We denote $H^s(\mathbb T_L;\mathbb R)$ as $H^s(\mathbb T_L)$, for all $s\geq 0$. We use the following periodic elliptic estimate in the sequel:
	\begin{equation}\label{eq:elliptic}
		\norm{\boldsymbol{u}}_{H^4}
		\leq
		C_L
		\left(
	\norm{\boldsymbol{u}} _{L^2}+	\norm{\Delta^2\boldsymbol{u}} _{L^2}
		\right).
	\end{equation}
	For completeness, we provide a proof. 
	Indeed, using the Fourier representation
	$
	\boldsymbol{u}(x)
	=
	\sum_{k\in\mathbb Z^d}
	\widehat{\boldsymbol{u}}_k e^{i\xi_k\cdot x},
	\ 
	\xi_k=\frac{2\pi}{L}k,
	$
	we have
	\begin{align*}
		\|\boldsymbol{u}\|_{H^4}^2
		&=
		L^d\sum_{k\in\mathbb Z^d}
		(1+|\xi_k|^2)^4
		|\widehat{\boldsymbol{u}}_k|_{\mathbb C^d}^2\leq
		8L^d\sum_{k\in\mathbb Z^d}
		(1+|\xi_k|^8)
		|\widehat{\boldsymbol{u}}_k|_{\mathbb C^d}^2,
	\end{align*}
	where we have used the elementary inequality
	$
	(1+r^2)^4\leq 8(1+r^8),
	\  r\geq0.
	$
	On the other hand,
	\[
	\Delta^2\boldsymbol{u}
	=
	\sum_{k\in\mathbb Z^d}
	|\xi_k|^4\widehat{\boldsymbol{u}}_k
	e^{i\xi_k\cdot x},
	\]
	and hence, by Parseval's identity,
	\begin{align*}
		L^d\sum_{k\in\mathbb Z^d}
		|\widehat{\boldsymbol{u}}_k|_{\mathbb C^d}^2
		=
		\|\boldsymbol{u}\|_{L^2}^2 \ \text{ and }\ 
		L^d\sum_{k\in\mathbb Z^d}
		|\xi_k|^8
		|\widehat{\boldsymbol{u}}_k|_{\mathbb C^d}^2
		=
		\|\Delta^2\boldsymbol{u}\|_{L^2}^2.
	\end{align*}
	Therefore, we infer 
	\[
	\|\boldsymbol{u}\|_{H^4}^2
	\leq
	C_L
	\left(
	\|\boldsymbol{u}\|_{L^2}^2
	+
	\|\Delta^2\boldsymbol{u}\|_{L^2}^2
	\right),
	\]
	which, upon taking the square root, yields \eqref{eq:elliptic}.
	
	Although \(\boldsymbol v=\nabla\phi\) is automatically a zero-mean vector field on \(\mathbb T_L^d\), we do not impose the zero-mean condition explicitly in the definition of the phase space. Indeed, none of the estimates developed below relies on the Poincaré inequality or requires the zero Fourier mode to be excluded. We therefore work in the full space
	$ H^s(\mathbb T_L^d;\mathbb R^d), $
		while noting that the particular solutions arising from the representation \(\boldsymbol v=\nabla\phi\) belong to the corresponding zero-mean subspace. This choice enables us, whenever possible, to establish our results without imposing the structural assumption \(\boldsymbol v=\nabla\phi\).
	Nevertheless, the results obtained for the vector formulation can be directly transferred to the scalar formulation by identifying \(\boldsymbol v\) with \(\nabla\phi\). It is important to emphasize, however, that in the absence of the gradient-field assumption \(\boldsymbol v=\nabla\phi\), the vector and scalar formulations of the KS equation are not necessarily equivalent. This distinction is addressed in Propositions \ref{prop-equiv-1} and \ref{prop-equivalence-2}.

\subsection{The linear operator and its analytic semigroup}
	Let
	$
	\mathbb{X}:=L^2(\mathbb T_L^d;\mathbb R^d),
	\  d\in\{2,3\},
	$
	and define the linear operator
	\begin{align*}
		A:=\Delta^2+\Delta\ \text{
			with domain }\ 
		D(A):=H^4(\mathbb T_L^d;\mathbb R^d).
	\end{align*}
	Then, we have the following result:
	\begin{proposition}\label{prop-analytic}
	The operator	$-A$ also generates an analytic $C_0$-semigroup
		$
		\{S_A(t)\}_{t\geq 0}
		$
		such that 
		\begin{align}\label{eqn-est-3}
			\|S_A(t)\|_{\mathcal L(\mathbb{X})}
			\leq e^{\ell t},
			\  t\geq0,
		\end{align}
		for some 	$
		\ell>\frac14
		$. 
		\end{proposition}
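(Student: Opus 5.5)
The plan is to diagonalize $A=\Delta^2+\Delta$ in the Fourier basis $\{\widetilde e_k\}_{k\in\mathbb Z^d}$, which is orthonormal in $\mathbb X$ (componentwise), and to read off every property from the symbol. For $\boldsymbol u=\sum_k\widehat{\boldsymbol u}_k e_k\in H^4$ we have
\begin{align*}
A\boldsymbol u=\sum_{k\in\mathbb Z^d}a_k\widehat{\boldsymbol u}_k e_k,\qquad a_k:=\mu_k^2-\mu_k,\quad \mu_k=\frac{4\pi^2}{L^2}|k|^2 .
\end{align*}
The real function $r\mapsto r^2-r$ attains its minimum $-\tfrac14$ at $r=\tfrac12$. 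Hence $a_k\ge-\tfrac14$ for every $k$, and $A$ is self-adjoint on $D(A)=H^4$. Self-adjointness follows because the multiplier is real, and $D(A)$ coincides with $\{\boldsymbol u:\sum_k|a_k|^2|\widehat{\boldsymbol u}_k|^2<\infty\}$. This identification uses $|a_k|^2\simeq(1+\mu_k)^4$ for large $|k|$, which is exactly the elliptic estimate \eqref{eq:elliptic} combined with $\|\boldsymbol u\|_{L^2}$ for the finitely many low modes.

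I then define the semigroup directly by
\begin{align*}
S_A(t)\boldsymbol u:=\sum_k e^{-ta_k}\widehat{\boldsymbol u}_k e_k .
\end{align*}
By Parseval, $\|S_A(t)\boldsymbol u\|_{L^2}^2=L^d\sum_k e^{-2ta_k}|\widehat{\boldsymbol u}_k|^2\le e^{t/2}\|\boldsymbol u\|_{L^2}^2$. This gives $\|S_A(t)\|_{\mathcal L(\mathbb X)}\le e^{t/4}\le e^{\ell t}$ for any $\ell>\tfrac14$, in fact for $\ell\ge \tfrac14$.

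The remaining properties are checked in turn. The semigroup law is immediate from the multiplier form. Strong continuity follows from dominated convergence: each term $|e^{-ta_k}-1|^2|\widehat{\boldsymbol u}_k|^2\to0$ and is bounded by $(e^{t/2}+1)^2|\widehat{\boldsymbol u}_k|^2$. The generator is identified as $-A$ by the same dominated convergence argument applied to $t^{-1}(e^{-ta_k}-1)+a_k$ on $H^4$, together with the domain characterization above. Alternatively, one can invoke the spectral theorem: $A+\tfrac14 I\ge0$ is self-adjoint, so $-(A+\tfrac14)$ generates a contraction semigroup that is analytic of angle $\pi/2$.

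For analyticity I will extend $t$ to $z$ with $\operatorname{Re}z>0$ and use the bound $|e^{-za_k}|=e^{-\operatorname{Re}z\,a_k}\le e^{\operatorname{Re}z/4}$. I will also verify the standard estimate
\begin{align*}
\|AS_A(t)\|_{\mathcal L(\mathbb X)}\le \sup_k|a_k|e^{-ta_k}\le \frac{C}{t}e^{t/4},\qquad t>0,
\end{align*}
which follows from $\sup_{s\ge-1/4}(s+\tfrac14)e^{-ts}\le \frac{e^{t/4}}{et}$. This, or equivalently the sectorial resolvent bound $\|(\lambda+A)^{-1}\|\le C/|\lambda-\tfrac14|$ on a sector around $\lambda=\tfrac14$, gives analyticity.

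The only step that needs any care is the domain identification $D(A)=H^4$, that is, showing that $|a_k|$ is comparable to $(1+\mu_k)^2$ away from finitely many $k$. I expect no genuine obstacle there: the comparison holds for $\mu_k\ge 2$, and the low modes are finitely many.
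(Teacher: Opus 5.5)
Your proposal is correct and follows essentially the paper's route: you diagonalize $A$ in the Fourier basis, define the semigroup as the multiplier $e^{-ta_k}$, and obtain the growth bound, strong continuity and the generator from Parseval's identity and dominated convergence. The differences are cosmetic. The paper first passes to the strictly positive $B=A+\ell I$, so that $1-e^{-x}\le x$ applies directly, $S_B$ is a contraction and $S_A(t)=e^{\ell t}S_B(t)$, and it proves analyticity by holomorphic extension of $S_B(z)$ to $\operatorname{Re}z>0$. You instead work with $a_k\ge-\frac14$ directly, which gives the slightly sharper bound $e^{t/4}$, and check analyticity through the sectorial resolvent bound or the estimate on $\|AS_A(t)\|$; that estimate, as you wrote it for all $t>0$, should be stated for $A+\frac14 I$ or restricted to $0<t\le 1$, which is all the criterion needs.
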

		\begin{proof}
	For $k\in\mathbb Z^d$, define
	$
	\xi_k:=\frac{2\pi}{L}k
	$
	and
	$
	e_k(x):=e^{i\xi_k\cdot x}.
	$
	Then
	$
	\Delta e_k=-|\xi_k|^2e_k,
	$
	and consequently
	\begin{align}\label{eqn-est-0}
		Ae_k
		=
		\bigl(|\xi_k|^4-|\xi_k|^2\bigr)e_k.
	\end{align}
	Writing
	$
	r_k:=|\xi_k|^2,
	$
	we have
	\begin{align*}
	r_k^2-r_k
	=
	\left(r_k-\frac12\right)^2-\frac14
	\geq-\frac14.
	\end{align*}
	Let us fix a constant
	$
	\ell>\frac14
	$
	and define
	\begin{align}\label{eqn-sel-b}
		B:=A+\ell I.
	\end{align}
	Then $B$ is self-adjoint on $\mathbb{X}$. Its Fourier symbol is
	$
	\beta_k
	:=
	|\xi_k|^4-|\xi_k|^2+\ell,
	$
	and 
	\begin{align}\label{eqn-betak}
		\beta_k
		:=
		\left(|\xi_k|^2-\frac12\right)^2
		+\ell-\frac14 \geq \ell-\frac14>0,
		\ \ k\in\mathbb Z^d,
	\end{align}
	so that $B$ is strictly positive. We now construct the semigroup generated by $-B$. For $t\geq0$, let us define
	\begin{align*}
		S_B(t)\boldsymbol{u}
		:=
		\sum_{k\in\mathbb Z^d}
		e^{-t\beta_k}\widehat{\boldsymbol{u}}_ke_k,
		\ \text{ where }\ 
		\boldsymbol{u}(x)=\sum_{k\in\mathbb Z^d}\widehat{\boldsymbol{u}}_ke_k(x),
		\ 
		\widehat{\boldsymbol{u}}_k\in\mathbb C^d.
	\end{align*}
	It can be easily seen that $S_B(t+s)=S_B(t)S_B(s),
	\ 
	S_B(0)=I.$  	It remains to verify strong continuity at $t=0$. By Parseval's identity, we have
	\begin{align*}
		\|S_B(t)	\boldsymbol{u}-	\boldsymbol{u}\| _{L^2}^2
		&=
		L^d\sum_{k\in\mathbb Z^d}
		|e^{-t\beta_k}-1|^2|	\widehat{\boldsymbol{u}}_k|_{\mathbb{C}^d}^2.
	\end{align*}
	For every fixed $k$,
	$
	e^{-t\beta_k}\to1
	\ \text{ as }\ t\downarrow0,
	$
	while
	$
	|e^{-t\beta_k}-1|^2\leq4.
	$
	Since
	$L^d\sum_{k\in\mathbb Z^d}|	\widehat{\boldsymbol{u}}_k|_{\mathbb{C}^d}^2=
	\|	\boldsymbol{u}\| _{L^2}^2<\infty,$
	the dominated convergence theorem gives
	\begin{align*}
	\lim_{t\downarrow0}\|S_B(t)	\boldsymbol{u}-	\boldsymbol{u}\| _{L^2}=0.
	\end{align*}
	Since $\beta_k>0$,
	$
	0<e^{-t\beta_k}\leq1.
	$
	Therefore, by Parseval's identity, we have 
	\begin{align*}
		\|S_B(t)	\boldsymbol{u}\|_{L^2}^2
		=
		L^d\sum_{k\in\mathbb Z^d}
		e^{-2t\beta_k}|	\widehat{\boldsymbol{u}}_k|_{\mathbb{C}^d}^2\leq
		L^d\sum_{k\in\mathbb Z^d}|	\widehat{\boldsymbol{u}}_k|_{\mathbb{C}^d}^2=
		\|\boldsymbol{u}\|_{L^2}^2.
	\end{align*}
	Hence, we conclude 
	\begin{align}\label{eqn-101}
	\|S_B(t)\|_{\mathcal L(\mathbb{X})}\leq1,
	\ t\geq0,
	\end{align}
	and  the semigroup is bounded, in fact contractive.
	Thus $\{S_B(t)\}_{t\geq0}$ is a contractive $C_0$-semigroup on $\mathbb{X}$.
	We next identify its generator. The domain of $B$ is
	\begin{align*}
	D(B)
	=
	\left\{
	\boldsymbol{u}=\sum_{k\in\mathbb Z^d}\widehat{\boldsymbol{u}}_ke_k\in \mathbb{X}:
	\sum_{k\in\mathbb Z^d}\beta_k^2|\widehat{\boldsymbol{u}}_k|_{\mathbb{C}^d}^2<\infty
	\right\}.
	\end{align*}
	For $u\in D(B)$, we have 
	\begin{align*}
	\frac{S_B(t)\boldsymbol{u}-\boldsymbol{u}}{t}
	=
	\sum_{k\in\mathbb Z^d}
	\frac{e^{-t\beta_k}-1}{t}\widehat{\boldsymbol{u}}_ke_k.
	\end{align*}
	For every $k$, it is immediate that 
	$
	\frac{e^{-t\beta_k}-1}{t}
	\longrightarrow
	-\beta_k
	\ \text{ as }\ t\downarrow0.
	$
	Moreover, since $1-e^{-x}\leq x$ for $x\geq0$, we get 
	$
	\left|
	\frac{e^{-t\beta_k}-1}{t}
	\right|
	=
	\frac{1-e^{-t\beta_k}}{t}
	\leq
	\beta_k.
	$ Therefore, it follows that 
	\begin{align*}
	\left|
	\frac{e^{-t\beta_k}-1}{t}+\beta_k
	\right|^2
	|\widehat{\boldsymbol{u}}_k|_{\mathbb{C}^d}^2
	\leq
	4\beta_k^2|\widehat{\boldsymbol{u}}_k|_{\mathbb{C}^d}^2.
	\end{align*}
	The right-hand side is summable because $\boldsymbol{u}\in D(B)$. Therefore, another application of the dominated convergence theorem gives
	\begin{align*}
		\frac{S_B(t)\boldsymbol{u}-\boldsymbol{u}}{t}
		\longrightarrow
		-\sum_{k\in\mathbb Z^d}\beta_k\widehat{\boldsymbol{u}}_ke_k
		=
		-B\boldsymbol{u}
		\ \text{ in }\ \mathbb{X}.
	\end{align*}
	Thus the generator of $\{S_B(t)\}_{t\geq0}$ is $-B$.
	We finally verify analyticity. For $z\in\mathbb C$ with
	$
	\operatorname{Re}z>0,
	$
	define
	\begin{align*}
	S_B(z)\boldsymbol{u}
	:=
	\sum_{k\in\mathbb Z^d}
	e^{-z\beta_k}\widehat{\boldsymbol{u}}_ke_k.
	\end{align*}
	Since
	$
	|e^{-z\beta_k}|
	=
	e^{-(\operatorname{Re}z)\beta_k}
	\leq1,
	$
	we have
	$
	\|S_B(z)\boldsymbol{u}\|_{L^2}\leq\|\boldsymbol{u}\|_{L^2}.
	$
	Let
	$
	0<\theta<\frac{\pi}{2}
	$
	and consider the closed sector
	\begin{align*}
	\Sigma_\theta
	:=
	\left\{
	z\in\mathbb C:
	|\arg z|\leq\frac{\pi}{2}-\theta
	\right\}.
	\end{align*}
	For $z\in\Sigma_\theta$, we immediately have 
	$
	\operatorname{Re}z
	\geq
	|z|\sin\theta.
	$
	Hence, for every $m\in\mathbb N$, this leads to 
	\begin{align*}
	\beta_k^m|e^{-z\beta_k}|
	\leq
	\beta_k^m
	e^{-|z|\sin\theta\,\beta_k}.
	\end{align*}
	On every compact subset of $\Sigma_\theta\cap\{z:\operatorname{Re}z>0\}$,
	the right-hand side is uniformly bounded in $k$, since
	$
	\sup_{r\geq0}r^m e^{-ar}<\infty
	\  (a>0).
	$
	Consequently, the Fourier series defining $S_B(z)$ may be differentiated
	term by term on such compact subsets. In particular, we have 
	\begin{align*}
	\frac{d}{dz}S_B(z)\boldsymbol{u}
	=
	-\sum_{k\in\mathbb Z^d}
	\beta_ke^{-z\beta_k}\widehat{\boldsymbol{u}}_ke_k,
	\end{align*}
	and therefore
	$
	z\mapsto S_B(z)\boldsymbol{u}
	$
	is holomorphic in the right half-plane.
	Thus
	$-B$ generates a bounded analytic $C_0$-semigroup on $\mathbb{X}$.
	Since
	$
	B=A+\ell I,
	$
	we have
	$
	A=B-\ell I.
	$
	Therefore
	$
	e^{-tA}
	=
	e^{\ell t}e^{-tB}.
	$
	Consequently, $-A$ also generates an analytic $C_0$-semigroup, which we denote by
	$
	S_A(t):=e^{-tA}=e^{\ell t}S_B(t), \ t\geq 0.
	$
	Using the contractivity of $S_B(\cdot)$ (see \eqref{eqn-101}), we arrive at \eqref{eqn-est-3}, which completes the proof. 
	\end{proof}

\subsubsection{The space $H^2$ as the fractional domain} 	For the local theory provided in Section \ref{sec-4}, it is convenient to use the strictly positive	operator $B$.	Let us first show that $\beta_k$ is comparable to $1+|\xi_k|^4$. Since 
	$$
	\beta_k
	=
	|\xi_k|^4-|\xi_k|^2+\ell=
	\left(|\xi_k|^2-\frac12\right)^2+\ell-\frac14,
	$$
	we distinguish two cases.
	If $|\xi_k|^2\leq1$, then
	\begin{align*}
	\beta_k\geq \ell-\frac14
	\geq
	\frac{\ell-\frac14}{2}(1+|\xi_k|^4),
	\end{align*}
	because $1+|\xi_k|^4\leq2$.
	If $|\xi_k|^2\geq1$, then
	$
	\left(|\xi_k|^2-\frac12\right)^2
	\geq
	\frac14|\xi_k|^4,
	$
	and hence
	\begin{align*}
	\beta_k
	\geq
	\frac14|\xi_k|^4
	\geq
	\frac18(1+|\xi_k|^4).
	\end{align*}
	Therefore there exists a constant $\ell_1>0$, depending only on $\ell$, such that
	\begin{align*}
	\beta_k\geq \ell_1(1+|\xi_k|^4).
	\end{align*}
	For the upper bound, we use
	$
	|\xi_k|^2
	\leq
	\frac12|\xi_k|^4+\frac12
	$
	to obtain
	\begin{align*}
	\beta_k
	\leq
	|\xi_k|^4+\frac12|\xi_k|^4+\frac12+\ell
	=
	\frac32|\xi_k|^4+\left(\ell+\frac12\right).
	\end{align*}
	Thus there exists $\ell_2>0$, depending only on $\ell$, such that
	\begin{align*}
	\beta_k\leq \ell_2(1+|\xi_k|^4).
	\end{align*}
	Consequently,
	\begin{align*}
	\beta_k\asymp1+|\xi_k|^4,
	\ \  k\in\mathbb Z^d.
	\end{align*}
	In particular, 
	there exist constants $\ell_1,\ell_2>0$, independent of $k$, such that
	\begin{align}\label{eqn-est-1}
		\ell_1(1+|\xi_k|^4)
		\leq
		\beta_k
		\leq
		\ell_2(1+|\xi_k|^4),
		\ \  k\in\mathbb Z^d.
	\end{align}
	%		Here and below, $a_k\asymp b_k$ means that there exist constants
	%		$\ell_1,\ell_2>0$, independent of $k$, such that
	%		\begin{align*}
	%		\ell_1b_k\leq a_k\leq \ell_2b_k.
	%		\end{align*}
	If
	$
	\boldsymbol{u}(x)=\sum_{k\in\mathbb Z^d}\widehat{\boldsymbol{u}}_ke^{i\xi_k\cdot x},
	$
	then, by the spectral definition of fractional powers, we have 
	\begin{align*}
	\|B^{1/2}\boldsymbol{u}\| _{L^2}^2
	=
	L^d\sum_{k\in\mathbb Z^d}
	\beta_k|\widehat{\boldsymbol{u}}_k|_{\mathbb{C}^d}^2.
	\end{align*}
	Using \eqref{eqn-est-1}, we find 
	\begin{align*}
	\|B^{1/2}\boldsymbol{u}\| _{L^2}^2
	\asymp
	\sum_{k\in\mathbb Z^d}
	(1+|\xi_k|^4)|\widehat{\boldsymbol{u}}_k|_{\mathbb{C}^d}^2.
	\end{align*}
	On the other hand,
	\begin{align}\label{eqn-est-12}
	\|\boldsymbol{u}\|_{H^2}^2
	\asymp
	\sum_{k\in\mathbb Z^d}
	(1+|\xi_k|^2)^2|\widehat{\boldsymbol{u}}_k|_{\mathbb{C}^d}^2,
	\end{align}
	and
	$
	(1+|\xi_k|^2)^2
	=
	1+2|\xi_k|^2+|\xi_k|^4
	\asymp
	1+|\xi_k|^4.
	$
	Consequently,
	$
	D(B^{1/2})
	=
	H^2(\mathbb T_L^d;\mathbb R^d)
	$
	with equivalent norms.
	Thus $H^2(\mathbb T_L^d;\mathbb R^d)$ is the natural phase space for the fourth-order
	parabolic equation \eqref{eq:vector-KS equation}.

\subsubsection{Semigroup properties} 
We need the following properties of the semigroup $\{S_A(t)\}_{t\geq 0}$ in the following sequel.
\begin{proposition}
	The semigroup $\{S_A(t)\}_{t\geq 0}$ enjoys the following properties: 
	\begin{align}
		&	\|S_A(t)\boldsymbol{u}\|_{H^2}
		\leq
		C_Tt^{-1/2}\|\boldsymbol{u}\|_{L^2},
		\ \ 0<t\leq T, \  \boldsymbol{u} \in L^2(\mathbb T_L^d;\mathbb R^d), \label{eq:SA-L2-H2} \\
		&	\|S_A(t)\boldsymbol{u}\|_{H^2}
		\leq
		C_T\|\boldsymbol{u}\|_{H^2},
		\  0\leq t\leq T, \ \boldsymbol{u} \in H^2(\mathbb T_L^d;\mathbb R^d). \label{eq:SA-H2-H2}
	\end{align}
	\end{proposition}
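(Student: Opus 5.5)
The plan is to work entirely on the Fourier side, as in the proof of Proposition \ref{prop-analytic}. We use the factorization $S_A(t)=e^{\ell t}S_B(t)$, the two-sided bound \eqref{eqn-est-1} on the symbol $\beta_k$, and the norm equivalence $D(B^{1/2})=H^2(\mathbb T_L^d;\mathbb R^d)$ established above. Concretely, we write $\boldsymbol{u}=\sum_k\widehat{\boldsymbol{u}}_ke_k$, so that $S_A(t)\boldsymbol{u}=\sum_k e^{\ell t}e^{-t\beta_k}\widehat{\boldsymbol{u}}_ke_k$. By \eqref{eqn-est-12} and \eqref{eqn-est-1}, there is a constant $C_{\ell}$, independent of $k$ and $t$, such that
\begin{align*}
\|S_A(t)\boldsymbol{u}\|_{H^2}^2\leq C_\ell\, e^{2\ell t}\,L^d\sum_{k\in\mathbb Z^d}\beta_k e^{-2t\beta_k}|\widehat{\boldsymbol{u}}_k|_{\mathbb C^d}^2 .
\end{align*}

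For \eqref{eq:SA-L2-H2}, we use the elementary bound $\sup_{r\geq0} r e^{-2tr}=\frac{1}{2et}$. This gives $\beta_k e^{-2t\beta_k}\leq (2et)^{-1}$ uniformly in $k$. Parseval's identity then yields
\begin{align*}
\|S_A(t)\boldsymbol{u}\|_{H^2}^2\leq \frac{C_\ell e^{2\ell T}}{2e\,t}\|\boldsymbol{u}\|_{L^2}^2, \qquad 0<t\leq T.
\end{align*}
Taking square roots gives \eqref{eq:SA-L2-H2} with $C_T=(C_\ell e^{2\ell T}/(2e))^{1/2}$.

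For \eqref{eq:SA-H2-H2}, we instead bound $e^{-2t\beta_k}\leq1$, which is valid since $\beta_k>0$ by \eqref{eqn-betak}. Thus the right-hand side above is at most $C_\ell e^{2\ell T}L^d\sum_k\beta_k|\widehat{\boldsymbol{u}}_k|^2=C_\ell e^{2\ell T}\|B^{1/2}\boldsymbol{u}\|_{L^2}^2$. Applying \eqref{eqn-est-1} and \eqref{eqn-est-12} once more bounds this by $C e^{2\ell T}\|\boldsymbol{u}\|_{H^2}^2$ for $0\leq t\leq T$. Equivalently, $S_B(t)$ commutes with $B^{1/2}$ and is contractive by \eqref{eqn-101}.

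There is no real obstacle here. The only point requiring care is bookkeeping: the constants in the equivalence $\|\cdot\|_{H^2}\asymp\|B^{1/2}\cdot\|_{L^2}$ depend only on $\ell$ and $L$. They enter through the factor $(1+|\xi_k|^2)^2\asymp 1+|\xi_k|^4\asymp\beta_k$, and they must be tracked so that $C_T$ depends on $T$ only through $e^{\ell T}$.
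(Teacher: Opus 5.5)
Your proof is correct. It differs from the paper's mainly in how you handle the indefinite symbol $|\xi_k|^4-|\xi_k|^2$.

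The paper works with the unshifted multiplier and proves $(1+r)^2e^{-2(r^2-r)t}\le C_Tt^{-1}$ by a low/high frequency split. For $0\le r\le 2$ it uses $e^{-2(r^2-r)t}\le e^{T/2}$; for $r\ge 2$ it uses $r^2-r\ge r^2/2$.

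You instead factor out $e^{\ell t}$ and compare the $H^2$ weight with the strictly positive symbol $\beta_k$ via \eqref{eqn-est-1}. You then close with the single bound $\sup_{r\ge0}re^{-2tr}=(2et)^{-1}$. This is just the standard estimate $\|B^{1/2}S_B(t)\|\le(2et)^{-1/2}$ for a positive self-adjoint operator.

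Your route is shorter, avoids the case split, and reuses the identification $D(B^{1/2})=H^2$ already established. The price is the equivalence constants $\ell_1,\ell_2$ and a growth rate $e^{\ell T}$ with $\ell>\frac14$. The paper's case analysis is self-contained and needs neither the shifted operator nor those constants.

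For \eqref{eq:SA-H2-H2} the two arguments agree in substance: $S_A(t)$ is a Fourier multiplier that commutes with the $H^2$ weight and is bounded by $e^{\ell t}$. Your passage through $\|B^{1/2}\cdot\|_{L^2}$ is valid but unnecessary. Bounding the multiplier directly by $e^{t/4}$ against the weight $(1+|\xi_k|^2)^2$ gives $\|S_A(t)\boldsymbol{u}\|_{H^2}\le e^{t/4}\|\boldsymbol{u}\|_{H^2}$ with no equivalence constants.

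A minor point on your bookkeeping remark: the constants $\ell_1,\ell_2$ depend only on $\ell$, not on $L$.
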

	\begin{proof}
	We infer from \eqref{eqn-est-0} that
	\begin{align*}
	S_A(t)\boldsymbol{u}
	=
	\sum_{k\in\mathbb Z^d}
	e^{-t(r_k^2-r_k)}
	\widehat{\boldsymbol{u}}_k e_k.
	\end{align*}
	where $r_k=|\xi_k|^2$. 
	By the Fourier characterization of
	the periodic $H^2$-norm and Parseval's identity, for all $\boldsymbol{u} \in L^2(\mathbb T_L^d;\mathbb R^d)$, we have 
	\begin{equation}\label{eqn-est-2}
		\|S_A(t)\boldsymbol{u}\|_{H^2}^2
		\leq
		C
		\sum_{k\in\mathbb Z^d}
		(1+|\xi_k|^2)^2
		e^{-2t(r_k^2-r_k)}
		|\widehat{\boldsymbol{u}}_k|_{\mathbb{C}^d}^2=
		C
		\sum_{k\in\mathbb Z^d}
		(1+r_k)^2
		e^{-2t(r_k^2-r_k)}
		|\widehat{\boldsymbol{u}}_k|_{\mathbb{C}^d}^2.
	\end{equation}
	It is therefore sufficient to establish
	\begin{equation}\label{eq:multiplier-estimate}
		(1+r)^2e^{-2(r^2-r)t}
		\leq
		C_Tt^{-1},
		\ \ 
		r\geq0,\ \ 0<t\leq T.
	\end{equation}
	We consider two cases.
	
	\noindent
	\emph{Case 1: $0\leq r\leq2$.}
	Since
	$
	r^2-r
	=
	\left(r-\frac12\right)^2-\frac14
	\geq-\frac14,
	$
	we have
	\begin{align*}
	e^{-2(r^2-r)t}
	\leq e^{t/2}
	\leq e^{T/2}.
	\end{align*}
	Moreover,
	$
	(1+r)^2\leq9,
	$
	so that
	\begin{align*}
		(1+r)^2e^{-2(r^2-r)t}
		\leq
		9e^{T/2}\leq 	9Te^{T/2}t^{-1},
	\end{align*}
	since $0<t\leq T$.

	\medskip
	\noindent
	\emph{Case 2: $r\geq2$.}
	In this case,
	$
	r^2-r
	=
	r^2\left(1-\frac1r\right)
	\geq
	\frac12r^2.
	$
	Therefore, we infer 
	\begin{align*}
	e^{-2(r^2-r)t}
	\leq
	e^{-tr^2}.
	\end{align*}
	Also, since $r\geq2$,
	$
	1+r\leq\frac32r\leq2r,
	$
	and hence
	$
	(1+r)^2\leq4r^2.
	$
	Consequently, for $0<t\leq T$, we have 
	\begin{align*}
		(1+r)^2e^{-2(r^2-r)t}
		\leq
		4r^2e^{-tr^2}\leq Ct^{-1},
	\end{align*}
	since  $	\sup_{r\geq0}r^2e^{-r^2}<\infty$. 
	Combining the two cases gives \eqref{eq:multiplier-estimate}, and \eqref{eq:SA-L2-H2} follows from \eqref{eqn-est-2}.
	Moreover, one can deduce the estimate  \eqref{eq:SA-H2-H2}  from \eqref{eqn-est-3}. 
	\end{proof}

	\subsection{Local Lipschitz continuity}  Since the spatial dimension is either two or three, the periodic Sobolev embeddings
	give
	$
	H^2(\mathbb T_L^d;\mathbb{R}^d)
	\hookrightarrow
	L^\infty(\mathbb T_L^d;\mathbb{R}^d)
	$
	and
	$
	H^2(\mathbb T_L^d;\mathbb{R}^d)
	\hookrightarrow
	W^{1,4}(\mathbb T_L^d;\mathbb{R}^d).
	$ 	
	Therefore, we have 
	\begin{equation}\label{eq:sobolev-correct}
		\|\boldsymbol{v}\|_{L^{\infty}}
		+
		\|\boldsymbol{v}\|_{L^4}
		+
		\|\nabla \boldsymbol{v}\|_{L^4}
		\leq
		C_L\|\boldsymbol{v}\|_{H^2}.
	\end{equation}
	We  define $F:H^2(\mathbb T_L^d;\mathbb{R}^d)\to L^2(\mathbb T_L^d;\mathbb{R}^d)$ by 
	\begin{align*}
		F(\boldsymbol{v})=-(\boldsymbol{v}\cdot\nabla)\boldsymbol{v}-\lambda_{\boldsymbol{v}}\boldsymbol{v},
	\end{align*}
	where
	\begin{equation}\label{eqn-lambda-1}
		\lambda_{\boldsymbol v}
		=
		\|\Delta\boldsymbol v\|_{L^2}^2
		-
		\|\nabla\boldsymbol v\|_{L^2}^2
		-
		\frac12
		\int_{\mathbb T_L^d}
		|\boldsymbol v|^2
		\operatorname{div}\boldsymbol v dx.
	\end{equation}
	
	\begin{lemma}
	The operator 
		$	F:H^2(\mathbb T_L^d;\mathbb{R}^d)\to L^2(\mathbb T_L^d;\mathbb{R}^d)$
		is locally Lipschitz for $d=2,3$. 
	\end{lemma}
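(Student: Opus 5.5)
We fix $R>0$ and take $\boldsymbol v,\boldsymbol w\in H^2(\mathbb T_L^d;\mathbb R^d)$ with $\|\boldsymbol v\|_{H^2},\|\boldsymbol w\|_{H^2}\le R$. The goal is an estimate $\|F(\boldsymbol v)-F(\boldsymbol w)\|_{L^2}\le C(R,L)\|\boldsymbol v-\boldsymbol w\|_{H^2}$, and the only tool needed is the embedding \eqref{eq:sobolev-correct}, valid for $d=2,3$. Write $\boldsymbol z=\boldsymbol v-\boldsymbol w$. We split
\begin{align*}
F(\boldsymbol v)-F(\boldsymbol w)
=-(\boldsymbol z\cdot\nabla)\boldsymbol v-(\boldsymbol w\cdot\nabla)\boldsymbol z
-(\lambda_{\boldsymbol v}-\lambda_{\boldsymbol w})\boldsymbol v-\lambda_{\boldsymbol w}\boldsymbol z
\end{align*}
and treat the transport part and the multiplier part separately.

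\emph{Transport part.} By H\"older's inequality and \eqref{eq:sobolev-correct},
\begin{align*}
\|(\boldsymbol z\cdot\nabla)\boldsymbol v\|_{L^2}\le\|\boldsymbol z\|_{L^\infty}\|\nabla\boldsymbol v\|_{L^2}\le C_LR\|\boldsymbol z\|_{H^2},
\qquad
\|(\boldsymbol w\cdot\nabla)\boldsymbol z\|_{L^2}\le\|\boldsymbol w\|_{L^\infty}\|\nabla\boldsymbol z\|_{L^2}\le C_LR\|\boldsymbol z\|_{H^2}.
\end{align*}

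\emph{Multiplier part.} First, $\lambda$ is bounded on the ball: $|\lambda_{\boldsymbol w}|\le \|\boldsymbol w\|_{H^2}^2+\|\boldsymbol w\|_{H^2}^2+\tfrac12\|\boldsymbol w\|_{L^4}^2\|\operatorname{div}\boldsymbol w\|_{L^2}\le C(R^2+R^3)$. Hence $\|\lambda_{\boldsymbol w}\boldsymbol z\|_{L^2}\le C(R^2+R^3)\|\boldsymbol z\|_{H^2}$.

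Next, for $|\lambda_{\boldsymbol v}-\lambda_{\boldsymbol w}|$ we use the identities $\|\Delta\boldsymbol v\|^2-\|\Delta\boldsymbol w\|^2=(\Delta\boldsymbol z,\Delta\boldsymbol v+\Delta\boldsymbol w)$ and likewise for the gradient terms. Each is bounded by $2R\|\boldsymbol z\|_{H^2}$. For the cubic term we write
\begin{align*}
|\boldsymbol v|^2\operatorname{div}\boldsymbol v-|\boldsymbol w|^2\operatorname{div}\boldsymbol w
=(\boldsymbol z\cdot(\boldsymbol v+\boldsymbol w))\operatorname{div}\boldsymbol v+|\boldsymbol w|^2\operatorname{div}\boldsymbol z .
\end{align*}
The $L^1$ norm of the first term is at most $\|\boldsymbol z\|_{L^\infty}\|\boldsymbol v+\boldsymbol w\|_{L^2}\|\operatorname{div}\boldsymbol v\|_{L^2}$, and that of the second is at most $\|\boldsymbol w\|_{L^4}^2\|\nabla\boldsymbol z\|_{L^2}$. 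Both are $\le CR^2\|\boldsymbol z\|_{H^2}$, again by \eqref{eq:sobolev-correct}. Therefore $|\lambda_{\boldsymbol v}-\lambda_{\boldsymbol w}|\le C(R+R^2)\|\boldsymbol z\|_{H^2}$, and multiplying by $\|\boldsymbol v\|_{L^2}\le R$ gives the required bound for $(\lambda_{\boldsymbol v}-\lambda_{\boldsymbol w})\boldsymbol v$.

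Summing the four contributions yields $\|F(\boldsymbol v)-F(\boldsymbol w)\|_{L^2}\le C_L(R+R^2+R^3)\|\boldsymbol v-\boldsymbol w\|_{H^2}$. This also shows that $F$ indeed maps $H^2$ into $L^2$, by taking $\boldsymbol w=0$.

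The only point needing care is the nonlocal multiplier $\lambda$. It contains $\|\Delta\boldsymbol v\|^2$, which is exactly at the $H^2$ level, so it is Lipschitz on $H^2$ but not on any weaker space. The cubic integral requires $H^2\hookrightarrow L^\infty$ and $L^4$, which is precisely where the restriction $d\le3$ enters.
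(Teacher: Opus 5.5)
Your proof is correct and follows essentially the same route as the paper. You use the same four-term splitting, H\"older's inequality combined with the embeddings \eqref{eq:sobolev-correct}, difference-of-squares bounds for $\|\Delta\cdot\|_{L^2}^2$ and $\|\nabla\cdot\|_{L^2}^2$, and the same decomposition $|\boldsymbol v|^2\operatorname{div}\boldsymbol v-|\boldsymbol w|^2\operatorname{div}\boldsymbol w=(|\boldsymbol v|^2-|\boldsymbol w|^2)\operatorname{div}\boldsymbol v+|\boldsymbol w|^2\operatorname{div}(\boldsymbol v-\boldsymbol w)$ for the cubic integral. The differences are only cosmetic: you place $\boldsymbol z$ in $L^\infty$ where the paper pairs $L^4$ with $L^4$, and you omit the harmless factor $\sqrt d$ in $\|\operatorname{div}\boldsymbol z\|_{L^2}\le\sqrt d\,\|\nabla\boldsymbol z\|_{L^2}$.
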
 
\begin{proof}	 If
	$\norm {\boldsymbol{v}}_{H^2}+\norm{\boldsymbol{w}}_{H^2}\le R$, then an application of H\"older's inequality yields 
	\begin{align}
		\norm{(\boldsymbol{v}\cdot\nabla)\boldsymbol{v}-(\boldsymbol{w}\cdot\nabla)\boldsymbol{w}}_{L^2}
		&\le \norm{\boldsymbol{v}-\boldsymbol{w}}_{L^4}\norm{\nabla \boldsymbol{v}}_{L^4}
		+\norm{\boldsymbol{w}}_{L^\infty}\norm{\nabla(\boldsymbol{v}-\boldsymbol{w})}_{L^2}\le C_R\norm{\boldsymbol{v}-\boldsymbol{w}}_{H^2}.
	\end{align}
	Moreover, we have 
	\begin{align}\label{eqn-102}
		&	\left|\norm{\Delta \boldsymbol{v}}_{L^2}^2-\norm{\Delta \boldsymbol{w}}_{L^2}^2\right|+	\left|\norm{\nabla \boldsymbol{v}}_{L^2}^2-\norm{\nabla \boldsymbol{w}}_{L^2}^2\right|
		\nonumber\\&\leq \left(\norm{\Delta \boldsymbol{v}}_{L^2}+\norm{\Delta \boldsymbol{w}}_{L^2}\right)\norm{\Delta (\boldsymbol{v}-\boldsymbol{w})}_{L^2}+\left(\norm{\nabla \boldsymbol{v}}_{L^2}+\norm{\nabla \boldsymbol{w}}_{L^2}\right)\norm{\nabla (\boldsymbol{v}-\boldsymbol{w})}_{L^2}
		\nonumber\\&	\le C_R\norm{\boldsymbol{v}-\boldsymbol{w}}_{H^2}.
	\end{align}
	Let us denote 
	\begin{align*}
	I(\boldsymbol{v})=\int_{\mathbb T_L^d}|\boldsymbol{v}|^2\operatorname{div}\boldsymbol{v} dx.
	\end{align*}
	Then,	one can observe that 
	\begin{align}
		|I(\boldsymbol{v})-I(\boldsymbol{w})|&\leq \int_{\mathbb T_L^d}\left||\boldsymbol{v}|^2-|\boldsymbol{w}|^2\right||\operatorname{div}\boldsymbol{v}| dx + \int_{\mathbb T_L^d}|\boldsymbol{w}|^2|\operatorname{div}(\boldsymbol{v}-\boldsymbol{w})| dx\\
		&\le \sqrt{d}\norm{|\boldsymbol{v}|+|\boldsymbol{w}|}_{L^4}\norm{\boldsymbol{v}-\boldsymbol{w}}_{L^4}\norm{\nabla \boldsymbol{v}}_{L^2}+\sqrt{d}\norm{\boldsymbol{w}}_{L^\infty}^2\norm{\nabla(\boldsymbol{v}-\boldsymbol{w})}_{L^2}\\
		&\le C_R\norm{\boldsymbol{v}-\boldsymbol{w}}_{H^2},
	\end{align}
where we have used the fact that $\|\operatorname{div}\boldsymbol{v}\|_{L^2}\leq\sqrt{d}\|\nabla \boldsymbol{v}\|_{L^2}$.	Using the definition of $\lambda_{\boldsymbol{v}}$ from \eqref{eqn-lambda-1} and \eqref{eqn-102}, we immediately have 
	\begin{align*}
	|\lambda_{\boldsymbol{v}}-\lambda_{\boldsymbol{w}}|\le C_R\norm{\boldsymbol{v}-\boldsymbol{w}}_{H^2},
	\ \  |\lambda_{\boldsymbol{w}}|\le C_R,
	\end{align*}
	and consequently
	\begin{align}\label{eq:FLip-1}
		\norm{F(\boldsymbol{v})-F(\boldsymbol{w})}_{L^2}&\le  \norm{(\boldsymbol{v}\cdot\nabla)\boldsymbol{v}-(\boldsymbol{w}\cdot\nabla)\boldsymbol{w}}_{L^2}+	|\lambda_{\boldsymbol{v}}-\lambda_{\boldsymbol{w}}|\|	\boldsymbol{v}\|_{L^2} +|\lambda_{\boldsymbol{w}}|\|\boldsymbol{v}-\boldsymbol{w}\|_{L^2}\nonumber\\&\leq C_R\norm{\boldsymbol{v}-\boldsymbol{w}}_{H^2}.
	\end{align}
	In particular, we deduce 
	\begin{align}\label{eq:FLip-2}
		\norm{F(\boldsymbol{v})}_{L^2}\le C_R\norm{\boldsymbol{v}}_{H^2}
		\ \text{ for }\ \norm{\boldsymbol{v}}_{H^2}\le R,
	\end{align}
	which completes the proof.
	\end{proof}

		\section{Proof of Theorem \ref{thm-one-d-global}}\label{sec3}
	
The aim of this section is to establish the well-posedness of the one-dimensional KS equation by means of a Fourier--Galerkin approximation and compactness arguments. Before establishing the well-posedness result, we first establish the equivalence between the constrained slope formulation and the scalar formulation of the 1D KS equation \eqref{eq:constrained-v}.

\begin{proposition}[Equivalence between the constrained slope and scalar formulations]\label{prop-equiv-1}
	Let $\mathbb T_L=\mathbb R/L\mathbb Z$ and let
	$v_0\in L^2(\mathbb T_L)$ satisfy
$
	\|v_0\|_{L^2}=1.
$
	Suppose that $v$ is a sufficiently regular periodic solution of \eqref{eq:constrained-v}.
%	\begin{equation}\label{eq:constrained-v}
%		\left\{
%		\begin{aligned}
%			v_t+v_{xxxx}+v_{xx}+vv_x&=\lambda(t)v,
%			&& (x,t)\in\mathbb T_L\times(0,T),\\
%			v(x,0)&=v_0(x),
%			&&x\in\mathbb T_L,
%		\end{aligned}
%		\right.
%	\end{equation}
%	where
%	\begin{equation}\label{eq:lambda-v}
%		\lambda(t)
%		=
%		\|v_{xx}(t)\|_{L^2}^2-\|v_x(t)\|_{L^2}^2.
%	\end{equation}
	Then, writing $	v=u_x,$
	the corresponding scalar function $u$ satisfies, up to an additive
	function of time,
	\begin{equation}\label{eq:constrained-u}
		u_t+u_{xxxx}+u_{xx}
		+\frac12(u_x)^2
		=\lambda_{u_x}(t)u,
	\end{equation}
	together with the constraint
	\begin{align*}
	\|u_x(t)\|_{L^2}=1, \ \text{ for all }\ t\geq 0,
	\end{align*}
where $\lambda_{u_x}(t)=
\|u_{xxx}(t)\|_{L^2}^2
-\|u_{xx}(t)\|_{L^2}^2.$ 	

Conversely, if $u$ satisfies \eqref{eq:constrained-u} and
	$\|u_x(t)\|_{L^2}=1$ for all $t\geq 0$, then $v=u_x$ satisfies
	\eqref{eq:constrained-v}.
\end{proposition}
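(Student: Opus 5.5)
The plan is to pass between the two formulations by spatial differentiation and integration. The only subtleties are the additive function of time and the zero-mean condition needed for $u$ to be periodic. The constraint $\|u_x(t)\|_{L^2}=1$ is then simply the invariance $\|v(t)\|_{L^2}=1$ for \eqref{eq:constrained-v}.

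\emph{Step 1: invariance of the constraint and of the mean.} For a sufficiently regular solution $v$ of \eqref{eq:constrained-v}, I would take the $L^2$-inner product with $v$. Periodic integration by parts gives $(v_{xxxx},v)=\|v_{xx}\|_{L^2}^2$, $(v_{xx},v)=-\|v_x\|_{L^2}^2$ and $(vv_x,v)=\tfrac13\int_{\mathbb T_L}\partial_x(v^3)\,dx=0$. Hence
\begin{align*}
\frac12\frac{d}{dt}\|v\|_{L^2}^2=-\|v_{xx}\|_{L^2}^2+\|v_x\|_{L^2}^2+\lambda_v(t)\|v\|_{L^2}^2=\lambda_v(t)\bigl(\|v\|_{L^2}^2-1\bigr),
\end{align*}
using \eqref{eq:lambda-v}. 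So $y(t):=\|v(t)\|_{L^2}^2-1$ solves the linear ODE $y'=2\lambda_v y$ with $y(0)=0$, and therefore $y\equiv0$. Integrating the equation over $\mathbb T_L$ kills the three derivative terms and gives $\frac{d}{dt}\int v\,dx=\lambda_v(t)\int v\,dx$. Thus the mean vanishes for all $t$ provided it vanishes at $t=0$. This mean-zero condition on $v_0$ is implicit in writing $v=u_x$ with $u$ periodic, and I would state it explicitly as part of the hypothesis.

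\emph{Step 2: constructing $u$.} With $\int v(t)\,dx=0$, I would set $u(x,t)=\int_0^x v(y,t)\,dy+a(t)$, which is $L$-periodic in $x$, with $a$ to be chosen later. Define
\begin{align*}
G:=u_t+u_{xxxx}+u_{xx}+\tfrac12(u_x)^2-\lambda_v u .
\end{align*}
Then $\partial_xG=v_t+v_{xxxx}+v_{xx}+vv_x-\lambda_v v=0$, so $G=c(t)$ depends only on time. This $c(t)$ is exactly the ``additive function of time'' in the statement. It can be removed entirely: replacing $a$ by $a+b$, where $b'-\lambda_v b=-c$ (a linear ODE with continuous coefficient, solvable globally), shifts $G$ to $G-c=0$. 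Finally, $\lambda_v=\|v_{xx}\|_{L^2}^2-\|v_x\|_{L^2}^2=\|u_{xxx}\|_{L^2}^2-\|u_{xx}\|_{L^2}^2=\lambda_{u_x}$, which gives \eqref{eq:constrained-u}. Step 1 gives $\|u_x\|_{L^2}=1$.

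\emph{Step 3: the converse.} If $u$ solves \eqref{eq:constrained-u} with $\|u_x\|_{L^2}=1$, I would differentiate in $x$. Using $\partial_x\bigl(\tfrac12 u_x^2\bigr)=u_xu_{xx}=vv_x$ and the fact that $\lambda_{u_x}$ is independent of $x$, one gets $v_t+v_{xxxx}+v_{xx}+vv_x=\lambda_{u_x}v$ with $\lambda_{u_x}=\lambda_v$. This is \eqref{eq:constrained-v}, and $\|v\|_{L^2}=1$.

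I expect no serious obstacle. The point needing care is the bookkeeping of the time-dependent constant in Step 2, together with the zero-mean requirement making $u$ periodic. The regularity needed to justify the integrations by parts and the ODE arguments is covered by the standing ``sufficiently regular'' assumption.
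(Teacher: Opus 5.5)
Your proposal is correct and follows the paper's proof essentially step by step. Like the paper, you use the $L^2$ identity to get the linear ODE for $\|v\|_{L^2}^2-1$, then integrate $\partial_x[\,\cdot\,]=0$ to obtain a time-dependent $c(t)$ removed by solving $h'-\lambda h=-c$, and finally differentiate in $x$ for the converse; your extra check that $\frac{d}{dt}\int_{\mathbb T_L} v\,dx=\lambda_v\int_{\mathbb T_L} v\,dx$, so a periodic primitive $u$ exists once $v_0$ has zero mean, makes explicit a point the paper leaves implicit in ``writing $v=u_x$''.
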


\begin{proof}
	We divide the proof into the following three parts.
	
	\smallskip
	\noindent
	\textbf{Step 1:} \emph{Preservation of the constraint.}
	Taking the $L^2$-inner product of
	\eqref{eq:constrained-v} with $v$, we obtain
	\begin{align}\label{eqn-111}
	(v_t,v)+(v_{xxxx},v)+(v_{xx},v)+(vv_x,v)
	=\lambda_v(t)\|v\|_{L^2}^2.
	\end{align}
	By periodicity and integration by parts, we find 
	\begin{align*}
	(v_t,v)=\frac12\frac{d}{dt}\|v\|_{L^2}^2,\ 
	(v_{xxxx},v)=\|v_{xx}\|_{L^2}^2,\ 
	(v_{xx},v)=-\|v_x\|_{L^2}^2.
	\end{align*}
	Moreover, we have 
	\begin{align*}
	(vv_x,v)
	=
	\int_{\mathbb T_L}v^2v_x dx
	=
	\frac13\int_{\mathbb T_L}(v^3)_x dx
	=0.
	\end{align*}
	Consequently, we infer from \eqref{eqn-111} that 
	\begin{equation}\label{eq:energy-v}
		\frac12\frac{d}{dt}\|v(t)\|_{L^2}^2
		+\|v_{xx}(t)\|_{L^2}^2
		-\|v_x(t)\|_{L^2}^2
		=
		\lambda_v(t)\|v(t)\|_{L^2}^2,
	\end{equation}
for all $t\in[0,T]$. 	Since
$	\lambda_v(t)
	=
	\|v_{xx}(t)\|_{L^2}^2
	-\|v_x(t)\|_{L^2}^2,$
	equation \eqref{eq:energy-v} becomes
	\begin{align*}
	\frac12\frac{d}{dt}\left(\|v(t)\|_{L^2}^2-1\right)
	=
	\lambda_v(t)\big(\|v(t)\|_{L^2}^2-1\big).
	\end{align*}
Therefore, if 
$	\|v_0\|_{L^2}=1,$
	then
$	\|v(t)\|_{L^2}^2\equiv1$
	is a solution of the above scalar differential equation. By
	uniqueness for this ordinary differential equation, we find 
$$\|v(t)\|_{L^2}=1,\ \  t\geq 0.$$
	Thus the constraint is preserved.
	
	\smallskip
	\noindent
	\textbf{Step 2:} \emph{From the constrained slope equation to the scalar
		equation.}
	Since
	$
	v=u_x,
$
	we have
$
	v_t=(u_t)_x,\ 
	v_{xx}=(u_{xx})_x,\ 
	v_{xxxx}=(u_{xxxx})_x.
	$
	Furthermore, we have 
	\begin{align*}
	vv_x
	=
	u_xu_{xx}
	=
	\frac12\big(u_x^2\big)_x.
	\end{align*}
	Substituting these identities into \eqref{eq:constrained-v} gives
	\begin{align*}
	(u_t)_x+(u_{xxxx})_x+(u_{xx})_x
	+\frac12(u_x^2)_x
	=
	\lambda_{u_x}(t)u_x,
	\end{align*}
which implies 
	\begin{align*}
	\left[
	u_t+u_{xxxx}+u_{xx}
	+\frac12(u_x)^2-\lambda_{u_x}(t)u
	\right]_x=0.
	\end{align*}
	Hence there exists a function $c(t)$, independent of $x$, such that
	\begin{equation}\label{eq:constant-time}
		u_t+u_{xxxx}+u_{xx}
		+\frac12(u_x)^2
		=
		\lambda_{u_x}(t)u+c(t).
	\end{equation}
The term $c(t)$ can be removed by changing $u$ by a function of
	time only. Indeed, let
	\begin{align*}
	\widetilde u(t,x)=u(t,x)+h(t),
	\end{align*}
	where $h$ solves
	\begin{align*}
	h'(t)-\lambda_{u_x}(t)h(t)=-c(t).
	\end{align*}
	Since $h$ is independent of $x$, we have 
	$
	\widetilde u_x=u_x,\ 
	\widetilde u_{xx}=u_{xx},\ 
	\widetilde u_{xxxx}=u_{xxxx}.
$
	Consequently, \eqref{eq:constant-time} becomes
	\begin{align*}
	\widetilde u_t+\widetilde u_{xxxx}
	+\widetilde u_{xx}
	+\frac12(\widetilde u_x)^2
	=
	\lambda_{u_x}(t)\widetilde u.
	\end{align*}
	Thus, after fixing the irrelevant spatially constant component of $u$,
	we obtain \eqref{eq:constrained-u}.
	Since $v=u_x$ and $\|v(t)\|_{L^2}=1$, we also have
	$\|u_x(t)\|_{L^2}=1$, for all $t\geq 0$.

	\smallskip
	\noindent
	\textbf{Step 3:} \emph{Converse implication.}
		Conversely, suppose that $u$ satisfies
	\eqref{eq:constrained-u} and
$	\|u_x(t)\|_{L^2}=1$ for all $t\geq 0$. 
Let us set 
$
	v=u_x.
$
	Differentiating \eqref{eq:constrained-u} with respect to
	$x$ gives
	\begin{align*}
	v_t+v_{xxxx}+v_{xx}
	+\frac12\big(v^2\big)_x
	=
	\lambda_v(t)v.
	\end{align*}
	Since
$
	\frac12(v^2)_x=vv_x,
$
	we obtain \eqref{eq:constrained-v}.
	Moreover, we have 
	$
	\|v(t)\|_{L^2}
	=
	\|u_x(t)\|_{L^2}
	=1, 
$ for all $t\geq 0$. 
	Taking the $L^2$-inner product with $v$ as in Step 1 and using
	$\|v(t)\|_{L^2}=1$ yields
	\begin{align*}
	\lambda_v(t)
	=
	\|v_{xx}(t)\|_{L^2}^2
	-\|v_x(t)\|_{L^2}^2.
	\end{align*}
	Since $v=u_x$, we get 
$
	\lambda_{u_x}(t)
	=
	\|u_{xxx}(t)\|_{L^2}^2
	-\|u_{xx}(t)\|_{L^2}^2.
$
	Therefore the constrained slope formulation and the constrained scalar
	formulation are equivalent, up to the addition of a function of time
	to the scalar potential $u$.
\end{proof}

Let us now provide the proof of Theorem \ref{thm-one-d-global}.

	\begin{proof}[Proof of Theorem \ref{thm-one-d-global}]
		We use a Fourier--Galerkin approximation to obtain global solvability results for \eqref{eq:constrained-v}.  The proof is divided into the following steps: 
		
		\smallskip
		
		\noindent
		\textbf{Step 1:} \emph{Fourier--Galerkin approximation.}
			Let
		\begin{align*}
		\widetilde{e}_k(x)=\frac1{\sqrt L}e^{2\pi ikx/L},
		\ \  k\in\mathbb Z,
		\end{align*}
		be the standard orthonormal Fourier basis of $L^2(\mathbb T_L)$.
		For $N\in\mathbb N$, let $P_N$ be the orthogonal projection onto
	$	X_N=\operatorname{span}\{\widetilde{e}_k:|k|\le N\}.$
		Since
$		P_Nv_0\to v_0
		\ \text{ in }\ H^2(\mathbb T_L),$
		we may define
		\begin{align*}
		v_0^N
		=
		\frac{P_Nv_0}{\|P_Nv_0\| _{L^2}}.
		\end{align*}
		For all sufficiently large $N$ this is well-defined, and
		\begin{align*}
		\|v_0^N\| _{L^2}=1,
		\ \ 
		v_0^N\to v_0
		\ \text{ in }\ H^2(\mathbb T_L).
		\end{align*}
		The finitely many remaining $N$ are irrelevant and may be discarded.
		
		We first consider the following finite-dimensional system obtained through the Fourier--Galerkin approximation:
		\begin{equation}\label{eq:galerkin}
			\begin{cases}
				v_t^N
				+
				P_Nv_{xxxx}^N
				+
				P_Nv_{xx}^N
				+
				P_N(v^Nv_x^N)
				=\lambda_{v^N}^N(t)v^N,
				\\
				v^N(0)=v_0^N,
			\end{cases}
		\end{equation}
		where
		\begin{equation}\label{eq:lambdaN}
			\lambda_{v^N}^N(t)
			=
			\|v_{xx}^N(t)\| _{L^2}^2
			-
			\|v_{x}^N(t)\| _{L^2}^2.
		\end{equation}
			Since $X_N$ is finite-dimensional and the nonlinear vector field $$\lambda_{v^N}^N(t)v^N-\left(P_Nv_{xxxx}^N
		+
		P_Nv_{xx}^N
		+
		P_N(v^Nv_x^N)\right)$$  is locally Lipschitz continuous, the Picard--Lindelöf theorem guarantees the existence of a unique maximal solution
		\begin{align*}
		v^N\in C^1([0,T_N);X_N).
		\end{align*}
		
		\smallskip
		
		\noindent
		\textbf{Step 2:} \emph{Preservation of the $L^2$-constraint.}
			Taking the $L^2$-inner product of \eqref{eq:galerkin} with $v^N$ gives
		\begin{align}\label{eqn-001}
		\frac12\frac{d}{dt}\|v^N\| _{L^2}^2
		+
		\|v_{xx}^N\| _{L^2}^2
		-
		\|v_{x}^N\| _{L^2}^2
		+
		(v^Nv_{x}^N,v^N)
		=
		\lambda_{v^N}^N(t)\|v^N\| _{L^2}^2,
		\end{align}
		where we have used the fact that $P_N$ is an orthogonal projection and
		$v^N\in X_N$.
	By periodicity, we obtain  
		\begin{align}\label{eqn-002}
		(v^Nv_{x}^N,v^N)
		=
		\int_{\mathbb T_L}(v^N)^2v_{x}^N dx
		=
		\frac13\int_{\mathbb T_L}\partial_x (v^N)^3 dx
		=0.
		\end{align}
		Using \eqref{eq:lambdaN} and \eqref{eqn-002} in \eqref{eqn-001}, we obtain
		\begin{align*}
		\frac12\frac{d}{dt}\left(\|v^N\| _{L^2}^2-1\right)
		=
		\left(
		\|v_{xx}^N\| _{L^2}^2-\|v_{x}^N\| _{L^2}^2
		\right)
		\left(
		\|v^N\| _{L^2}^2-1
		\right).
		\end{align*}
		Since
	$
		\|v_{0}^N\|_{L^2}^2=1,
	$
		uniqueness of solutions to this scalar ordinary differential equation, together with the equivalence of norms in finite-dimensional spaces, gives
		\begin{equation}\label{eq:constraintN}
			\|v^N(t)\| _{L^2}=1
			\ \text{ for all }\ t\in[0,T_N).
		\end{equation}
		
		\smallskip
		
		\noindent
		\textbf{Step 3:} \emph{Energy estimate.}
			Since $v^N$ is smooth in time and belongs to the finite-dimensional space
		$X_N$, we may take the inner product of \eqref{eq:galerkin} with $v_{t}^N(\cdot)$ to obtain
		\begin{align}\label{eqn-003}
	&	\|v_{t}^N(t)\| _{L^2}^2
		+
		(v_{xxxx}^N(t),v_{t}^N(t))
		+
		(v_{xx}^N(t),v_{t}^N(t))
		+
		(v^N(t)v_{x}^N(t),v_{t}^N(t))
	\nonumber\\&	=
		\lambda_{v^N}^N(t)(v^N(t),v_{t}^N(t)),
		\end{align}
	for a.e. $t\in[0,T]$.	Using \eqref{eq:constraintN}, we infer 
		\begin{align*}
		(v^N(t),v_{t}^N(t))
		=
		\frac12\frac{d}{dt}\|v^N(t)\| _{L^2}^2
		=0,
		\end{align*}
	for a.e. $t\in[0,T]$.	Moreover, periodic integration by parts gives 	for a.e. $t\in[0,T]$
		\begin{align*}
		(v_{xxxx}^N(t),v_{t}^N(t))+(v_{xx}^N(t),v_{t}^N(t))
%		=
%		\frac12\frac{d}{dt}\|v_{xx}^N(t)\| _{L^2}^2
%		-\frac12\frac{d}{dt}\|v_{x}^N(t)\| _{L^2}^2
		=
		\frac12\frac{d}{dt}
		\left(
		\|v_{xx}^N(t)\| _{L^2}^2-\|v_{x}^N(t)\| _{L^2}^2
		\right).
		\end{align*}
	Therefore, we infer from \eqref{eqn-003} that 
		\begin{equation}\label{eq:energyN}
			\frac12\frac{d}{dt}
			\left(
			\|v_{xx}^N(t)\| _{L^2}^2-\|v_{x}^N(t)\| _{L^2}^2
			\right)
			+
			\|v_{t}^N(t)\| _{L^2}^2
			=
			-(v^N(t)v_{x}^N(t),v_{t}^N(t)),
		\end{equation}
for a.e. $t\in[0,T]$.		Let us define 
		\begin{align}\label{eqn-eN}
		E_N(t)
		=
		\frac12
		\left(
		\|v_{xx}^N(t)\| _{L^2}^2-\|v_{x}^N(t)\| _{L^2}^2
		\right), \ t\in[0,T]. 
		\end{align}
	Therefore, from \eqref{eq:energyN}, we immediately have 
		\begin{equation}\label{eq:energy-ineq-N}
		E_N'(t)+\|v_{t}^N(t)\| _{L^2}^2
		=
		-(v^N(t)v_{x}^N(t),v_{t}^N(t))\leq 
		\frac12\|v_{t}^N(t)\| _{L^2}^2
		+
		\frac12\|v^N(t)v_{x}^N(t)\| _{L^2}^2,
		\end{equation}
	for a.e. $t\in[0,T]$, where we have used Young's inequality.

		\smallskip
		
		\noindent
		\textbf{Step 4:} \emph{Estimate of the nonlinear term.}
		In one dimension, Agmon's  inequality gives
		\begin{align}\label{eqn-Agmon}
		\|v\|_\infty^2
		\le
		C_L
		\left(
		\|v\| _{L^2}^2+\|v\| _{L^2}\|v_x\| _{L^2}
		\right),
		\ \
		v\in H^1(\mathbb T_L).
		\end{align}
		Applying this to $v^N$ and using \eqref{eq:constraintN}, we find 
		\begin{align*}
		\|v^N\|_{L^{\infty}}^2
		\le
		C\left(1+\|v_{x}^N\| _{L^2}\right),
		\end{align*}
	so that 
		\begin{align}\label{eqn-007}
			\|v^Nv_{x}^N\| _{L^2}^2
			\le
			\|v^N\|_{L^{\infty}}^2\|v_{x}^N\| _{L^2}^2\le
			C
			\left(
			\|v_{x}^N\| _{L^2}^2
			+
			\|v_{x}^N\| _{L^2}^3
			\right).
		\end{align}
		Consequently, we infer from \eqref{eq:energy-ineq-N} that 
		\begin{equation}\label{eq:EprimeN}
			E_N'(t) +\frac12\|v_{t}^N(t)\| _{L^2}^2
			\le
			C
			\left(
			\|v_{x}^N(t)\| _{L^2}^2
			+
			\|v_{x}^N(t)\| _{L^2}^3
			\right),
		\end{equation}
		for a.e. $t\in[0,T]$. 
		
		\smallskip
		
		\noindent
		\textbf{Step 5:} \emph{A priori $H^2$ estimate.}
			Since $\|v^N\| _{L^2}=1$, periodic integration by parts gives
		\begin{align}\label{eqn-004}
		\|v_{x}^N\| _{L^2}^2
		=
		-(v^N,v_{xx}^N)
		\le
		\|v_{xx}^N\| _{L^2}.
		\end{align}
		Let us set
		\begin{align*}
		Z_N (t):=\|v_{xx}^N(t) \| _{L^2}, \ t\in[0,T]. 
		\end{align*}
		Then, from \eqref{eqn-eN}, we have 
		\begin{align}\label{eqn-eN-bound}
		E_N(t)
		\ge
		\frac12(Z_N^2(t)-Z_N(t))
		=
		\frac12\left(Z_N(t)-\frac12\right)^2-\frac18, \ t\in[0,T]. 
		\end{align}
		Let us define
		\begin{align*}
		F_N(t)=E_N(t)+\frac18, \ t\in[0,T].
		\end{align*}
			Then, it is immediate from \eqref{eqn-eN-bound} that 
		\begin{align}\label{eqn-006}
		F_N(t)\ge0
	\ \text{	and }\ 
		F_N(t)
		\ge
		\frac12
		\left(Z_N(t)-\frac12\right)^2.
		\end{align}
	Therefore, we deduce for $t\in[0,T]$ that 
		\begin{align*}
		4F_N(t)+\frac12
		\geq
		2\left(Z_N(t)-\frac12\right)^2+\frac12
		=2Z_N^2(t)-2Z_N(t)+1= Z_N^2(t)+\left(Z_N(t)-1\right)^2\geq  Z_N^2(t).
		\end{align*}
		Consequently, we find 
		\begin{align*}
		Z_N^2(t)
		\le
		4F_N(t)+\frac12\Longrightarrow 
		Z_N(t)
		\le
		C(1+F_N(t))^{1/2}, \ t\in[0,T]. 
		\end{align*}
		Also, from \eqref{eqn-004}, we further have 
		\begin{align*}
		\|v_{x}^N(t)\| _{L^2}^2\le Z_N(t)
		\le
		C(1+F_N(t))^{1/2},
		\end{align*}
		and hence
		\begin{align*}
		\|v_{x}^N(t)\| _{L^2}^3
		\le
		C(1+F_N(t))^{3/4}, \ t\in[0,T]. 
		\end{align*}
		Therefore, we deduce 
		\begin{align*}
		\|v_{x}^N(t)\| _{L^2}^2+\|v_{x}^N(t)\| _{L^2}^3
		\le
		C(1+F_N(t))^{3/4}, \ t\in[0,T]. 
		\end{align*}
		Since $F_N'=E_N'$, \eqref{eq:EprimeN} yields
		\begin{align*}
		F_N'(t)
		\le
		C(1+F_N(t))^{3/4}, \ \text{ for a.e. } \  t\in[0,T].  
		\end{align*}
		Putting
	$
		G_N(t)=1+F_N(t),
	$
		we obtain
		\begin{align*}
		G_N'(t)\le CG_N(t)^{3/4}, \ \text{ for a.e. } \  t\in[0,T] 
\end{align*}
which implies 
\begin{align*}
		G_N(t)^{1/4}
		\le
		G_N(0)^{1/4}+CT, \ \text{ for all }\ t\in[0,T]. 
		\end{align*}
		Since
	$
		v_{0}^N\to v_0
		\ \text{ in }\ H^2(\mathbb T_L),
	$
		the sequence $G_N(0)$ is uniformly bounded. Therefore, for every
		$T>0$,
		\begin{align*}
		\sup_{0\le t\le T}F_N(t)\le C_T,
		\end{align*}
		where $C_T$ is independent of $N$.
		We recall  from \eqref{eqn-006} that
		\begin{align*}
 	\frac12
		\left(
		Z_N(t)-\frac12
		\right)^2 \leq F_N(t)
		\le C_T \ \Longrightarrow 
		\left|
		Z_N(t)-\frac12
		\right|
		\le
		\sqrt{2C_T}.
		\end{align*}
		Consequently, we get 
		\begin{align}\label{eqn-est-10}
		\sup_{0\le t\le T}	\|v_{xx}^N(t)\| _{L^2}
		=
		\sup_{0\le t\le T}	Z_N(t)
		\le
		\frac12+\sqrt{2C_T}
		\le C_T.
		\end{align}
		Moreover, using the fact that  $\|v^N(t)\| _{L^2}=1$ for all $t\in[0,T]$ and \eqref{eqn-004}, we obtain 
		\begin{equation}\label{eq:H2N}
				\sup_{0\le t\le T}
				\|v^N(t)\|_{H^2(\mathbb T_L)}
				\le \sup_{0\le t\le T}
				\left(
				\|v^N(t)\| _{L^2}^2
				+
				\|v_{x}^N(t)\| _{L^2}^2
				+
				\|v_{xx}^N(t)\| _{L^2}^2
				\right) \leq C_T.
		\end{equation}

		\smallskip
		
		\noindent
		\textbf{Step 6:} \emph{Estimate for $v_{t}^N$.}	Integrating \eqref{eq:EprimeN} over $(0,T)$ gives
		\begin{align*}
		E_N(T)-E_N(0)
		+
		\frac12\int_0^T\|v_{t}^N(t)\| _{L^2}^2 dt
		\le
		C\int_0^T
		\left(
		\|v_{x}^N(t)\| _{L^2}^2
		+
		\|v_{x}^N(t)\| _{L^2}^3
		\right) dt.
		\end{align*}
		Since
	$
		E_N(T)\ge-\frac18
		$
		and \eqref{eq:H2N} holds, we obtain
		\begin{equation}\label{eq:utN}
				\int_0^T\|v_{t}^N(t)\| _{L^2}^2 dt
				\le C_T,
		\end{equation}
		where $C_T$ is independent of $N$.
		
		\smallskip
		
		\noindent
		\textbf{Step 7:} \emph{Uniform $H^4$ estimate and the multiplier.}
			By the definition of $\lambda_{v^N}^N$ and using  \eqref{eq:H2N}, we get 
		\begin{align*}
		|\lambda_{v^N}^N(t)|
		=
	\left|	\|v_{xx}^N(t)\| _{L^2}^2-	\|v_{x}^N(t)\| _{L^2}^2\right| \leq 
		\|v_{x}^N(t)\| _{L^2}^2
		+
		\|v_{xx}^N(t)\| _{L^2}^2\leq C_T,
		\end{align*}
so that 
		$	\|\lambda_{v^N}^N\|_{L^\infty(0,T)}
			\le C_T.$ 
		Hence, from  \eqref{eq:galerkin}, \eqref{eqn-007}, \eqref{eqn-est-10} and \eqref{eq:H2N}, we find 
		\begin{align*}
		P_N(v_{xxxx}^N)
		=
		-v_{t}^N-P_N(v_{xx}^N)-P_N(v^Nv_{x}^N)-\lambda_{v^N}^Nv^N,
		\end{align*}
	and 
		\begin{align*}
		\|P_N(v_{xxxx}^N)\|_{L^2(0,T;L^2)}
		\le C_T.
		\end{align*}
		Since $v^N\in X_N$, $P_N$ commutes with $\partial_x$ and therefore
	$
		P_N(v_{xxxx}^N)
		=
		v_{xxxx}^N,
	$
	so that 
		\begin{align}\label{eqn-est-11}
		\|v_{xxxx}^N\|_{L^2(0,T;L^2)}
		\le C_T.
		\end{align}
		We now show that this estimate, together with the previously established \(H^2\)-estimate \eqref{eq:H2N}, implies a uniform \(H^4\)-bound. For each \(t\in[0,T]\), since \(v^N(t)\) is periodic, integration by parts and Young's inequality yield 
		\begin{align*}
		\|v_{xxx}^N(t)\|_{L^2}^2
		&=
		-\int_{\mathbb T_L}v_{xx}^N(t)v_{xxxx}^N(t)\,dx
		\le
		\|v_{xx}^N(t)\|_{L^2}\|v_{xxxx}^N(t)\|_{L^2}
		\nonumber\\&	\le
		\frac12\|v_{xx}^N(t)\|_{L^2}^2
		+
		\frac12\|v_{xxxx}^N(t)\|_{L^2}^2.
	\end{align*}
	Therefore, using the standard definition of the periodic \(H^4\)-norm, we find 
		\begin{align*}
			\int_0^T	\|v^N(t)\|_{H^4}^2dt 
			&\le
			C_L	\int_0^T\left(	\|v^N(t)\|_{H^2}^2+	\|v_{xxx}^N(t)\|_{L^2}^2+	\|v_{xxxx}^N(t)\|_{L^2}^2\right)dt\\
			&\le
			C_L	\int_0^T\left(
			\|v^N(t)\|_{H^2}^2
			+
			\|v_{xxxx}^N(t)\|_{L^2}^2
			\right)dt\leq C_T,
		\end{align*}
where we have used \eqref{eq:H2N} and \eqref{eqn-est-11}.  	We finally have 
\begin{equation}\label{eq:H4N}
	\|v^N\|_{L^2(0,T;H^4(\mathbb T_L))}
	\le C_T,
\end{equation}
where $C_T$ is independent of $N$.

		\smallskip
		
		\noindent
		\textbf{Step 8:} \emph{Global existence of the Fourier--Galerkin solutions.}
	The estimate \eqref{eq:H2N} is uniform with respect to $N$. Suppose,
	for contradiction, that $T_N<\infty$ for some $N$. By the
	finite-dimensional Picard--Lindel\"of theorem, the maximal solution
	$v^N$ can be continued beyond $T_N$ provided that
	\begin{align*}
	\limsup_{t\uparrow T_N}\|v^N(t)\|_{X_N}<\infty.
	\end{align*}
	Since $X_N$ is finite-dimensional, all norms on $X_N$ are equivalent.
	In particular, there exists a constant $C_N>0$ such that
$
	\|z\|_{X_N}\leq C_N\|z\|_{H^2},
	\ z\in X_N.
$
	Consequently, the uniform estimate \eqref{eq:H2N} yields
	\begin{align*}
	\sup_{0\leq t<T_N}\|v^N(t)\|_{X_N}
	\leq
	C_N\sup_{0\leq t<T_N}\|v^N(t)\|_{H^2}
	<\infty.
	\end{align*}
	Thus, the finite-dimensional norm of $v^N(t)$ remains bounded as
	$t\uparrow T_N$. The continuation criterion for the finite-dimensional
	ODE then implies that $v^N$ can be extended to an interval
	$[0,T_N+\varepsilon]$ for some $\varepsilon>0$, contradicting the
	maximality of $T_N$.	Therefore,
	$T_N=\infty$
	for every $N\in\mathbb N$. Hence each Galerkin approximation admits a
	global solution.
		
		\smallskip
		
		\noindent
		\textbf{Step 9:} \emph{Compactness.}
			Fix $T>0$. From \eqref{eq:H2N}, \eqref{eq:utN} and \eqref{eq:H4N} , we infer 
		\begin{align*}
	&	\{v^N\}
		\ \text{ is bounded in }
	L^\infty(0,T;H^2(\mathbb T_L))\cap 	L^2(0,T;H^4(\mathbb T_L))
	\ \text{
		and }\\
	&	\{v_{t}^N\}
		\ \text{ is bounded in }
		L^2(0,T;L^2(\mathbb T_L)).
	\end{align*}
By the Banach--Alaoglu theorem, there exists a subsequence,
still denoted by $\{v^N\}$, and some
$v\in L^\infty(0,T;H^2(\mathbb T_L))\cap L^2(0,T;H^4(\mathbb T_L)),$ such that
\begin{equation}\label{eqn-strong-conv}
\begin{aligned}
v^N&\xrightarrow{w^*} v
\ \text{ in }\ L^\infty(0,T;H^2(\mathbb T_L)),\\
v^N&\xrightarrow{w} v
\ \text{ in }\ L^2(0,T;H^4(\mathbb T_L)),\\
v_{t}^N&\xrightarrow{w} v_t
\ \text{ in }\ L^2(0,T;L^2(\mathbb T_L)).
\end{aligned}
\end{equation}
		Since
	$
		H^4(\mathbb T_L)
		\hookrightarrow\hookrightarrow
		H^2(\mathbb T_L)
		\hookrightarrow
		L^2(\mathbb T_L),
	$ and $
	H^2(\mathbb T_L)
	\hookrightarrow\hookrightarrow
	H^1(\mathbb T_L)
	\hookrightarrow
	L^2(\mathbb T_L),
	$
		the Aubin--Lions--Simons compactness lemma (\cite[Corollary 5]{JS-87}) gives, after passing to a subsequence,
		\begin{align}
		v^N&\to v
		\ \text{ strongly in }\ 
		L^2(0,T;H^2(\mathbb T_L)),\\
		v^N&\to v
		\ \text{ strongly in }\ 
		C([0,T];H^1(\mathbb T_L)),
		\end{align}
	Using the Sobolev embedding $H^1(\mathbb T_L)
	\hookrightarrow
	L^{\infty}(\mathbb T_L) $, we obtain 
		\begin{align}
			\sup_{t\in[0,T]}\|v^N(t)- v(t)\|_{L^{\infty}}\to 0\ \text{ as }\ N\to\infty  \label{strong-2}.
		\end{align}
	Since 
		\begin{align}
			\|v^Nv_{x}^N-vv_x\| _{L^2}
			\le
			\|v^N-v\|_{L^{\infty}}\|v_{x}^N\| _{L^2}
			+
			\|v\|_{L^{\infty}}\|v_{x}^N-v_x\| _{L^2},
		\end{align}
		and the right-hand side converges to zero in $L^{\infty}(0,T)$. Therefore, we have 
		\begin{align*}
			v^Nv_{x}^N\to vv_x \ \text{ strongly in }\ L^{\infty}(0,T;L^2(\mathbb T_L)). 
		\end{align*}
		
		\smallskip
		
		\noindent
		\textbf{Step 10:} \emph{Identification of the Lagrange multiplier.}
	We infer  from \eqref{eq:H2N} that 
		\begin{align*}
		\|\lambda_{v^N}^N\|_{L^\infty(0,T)}\le C_T.
		\end{align*}
		Hence, after passing to a subsequence by using the Bolzano--Weierstrass Theorem, we get 
		\begin{align*}
		\lambda_{v^N}^N\xrightarrow{w^*}\lambda
		\ \text{ in }\ L^\infty(0,T).
		\end{align*}
		However, we need to identify $\lambda$. 	The strong convergence
	given in \eqref{strong-2} 
		implies
		\begin{align*}
		v_{x}^N\to v_x
		\quad\text{and}\quad
		v_{xx}^N\to v_{xx}
		\qquad\text{in }L^2(0,T;L^2).
	\end{align*}
		Therefore
		\begin{align*}
		\|v_{x}^N\| _{L^2}^2
		\to
		\|v_x\| _{L^2}^2 \ \text{ and }\ 
		\|v_{xx}^N\| _{L^2}^2
		\to
		\|v_{xx}\| _{L^2}^2
		\ \text{ in }\ L^1(0,T),
		\end{align*}
	so that 
		\begin{align*}
		\lambda_{v^N}^N
		=
	\|v_{xx}^N\| _{L^2}^2-	\|v_{x}^N\| _{L^2}^2
		\to
	\|v_{xx}\| _{L^2}^2-	\|v_x\| _{L^2}^2
		\end{align*}
		strongly in $L^1(0,T)$.
	Since $\lambda_{v^N}^N\xrightarrow{w^*}\lambda
	\ \text{ in }\ L^\infty(0,T)$, the limit is uniquely identified as
		\begin{align*}
			\lambda(t)=
			\|v_{xx}(t)\| _{L^2}^2-\|v_x(t)\| _{L^2}^2=:\lambda_v(t),
		\end{align*}
		for a.e. $t\in(0,T)$.
		
		\smallskip
		
		\noindent
		\textbf{Step 11:} \emph{Passage to the limit in the equation.}
		Let $\psi\in C^\infty(\mathbb T_L\times[0,T])$ with $\psi(T)=0$ be periodic in $x$.
		The Galerkin equation, tested against $P_N\psi$, gives
		\begin{align}
			&\int_0^T
			\left[
			-(v^N,P_N\psi_t) +	(v_{xx}^N,(P_N\psi)_{xx})	-
			(v_{x}^N,(P_N\psi)_x)	+ (v^Nv_{x}^N,P_N\psi)	-	\lambda_{v^N}^N(t)(v^N,P_N\psi)	\right] dt	\\	&\quad	=	(v_{0}^N,P_N\psi(0)).
		\end{align}
		Using the convergences obtained above and the fact that
	$
		P_N\psi\to\psi
		\ \text{ in }\ C^\infty(\mathbb T_L\times[0,T]),
	$
		we may pass to the limit and obtain
		\begin{align*}
		\begin{aligned}
			&\int_0^T
			\Big[
			-(v,\psi_t)
			+
			(v_{xx},\psi_{xx})
			-
			(v_x,\psi_x)
			+
			(vv_x,\psi)
			-
			\lambda_v(t)(v,\psi)
			\Big] dt
			=
			(v_0,\psi(0)).
		\end{aligned}
		\end{align*}
		Therefore, we conclude 
	\begin{align}	\label{eqn-strong}
		v_t+v_{xxxx}+v_{xx}+vv_x=\lambda_v(t)v
		\end{align}
		in the distributional sense.
		Moreover, we have the following regularity by an application of the Lions--Magenes lemma (\cite[Theorem 3.1, Chapter 1]{JLL+EM-I-72})
			\begin{align}\label{eqn-reg}
		v\in
		C([0,T];H^2(\mathbb T_L))
		\cap
		H^1(0,T;L^2(\mathbb T_L))
		\cap
		L^2(0,T;H^4(\mathbb T_L)),
		\end{align}
		which implies that $v$ is a global strong solution and \eqref{eqn-strong} is satisfied in $L^2(0,T;L^2(\mathbb T_L))$.

		\smallskip
		
		\noindent
		\textbf{Step 12:} \emph{Preservation of the constraint.}
	From the convergence \eqref{strong-2}, we infer 
		$
		v\in C([0,T];H^2(\mathbb T_L)).$
		In particular,
	$
		v\in C([0,T];L^2(\mathbb T_L)).
	$
		Since
	$
		\|v^N(t)\| _{L^2}=1
	$
		for every  $t\in[0,T]$, passage to the limit gives
	$
		\|v(t)\| _{L^2}=1
		\ \text{ for every }\ t\in[0,T].
		$
		Since $T>0$ is arbitrary, we deduce 
		\begin{equation}\label{eq:constraint-final}
				\|v(t)\| _{L^2}=1
			\	 \text{ for every }\ t\ge0.
		\end{equation}

		\smallskip
		
		\noindent
		\textbf{Step 13:} \emph{Uniqueness.}
	Let $v_1$ and $v_2$ be two solutions of \eqref{eq:constrained-v} with the stated regularity in \eqref{eqn-reg} and the
		same initial datum. Let us take 
	$
		w=v_1-v_2.
	$
		Then $w$ satisfies 
		\begin{align*}
		w_t+w_{xxxx}+w_{xx}
		+
		v_1{v_1}_x-v_2{v_2}_x
		=
		\lambda_{v_1}v_1-\lambda_{v_2}v_2
		\end{align*}
in $L^2(0,T;L^2(\mathbb T_L))$, 		where
		\begin{align*}
		\lambda_{v_1}
		=
	\|{v_1}_{xx}\| _{L^2}^2-	\|{v_1}_x\| _{L^2}^2\ \text{ and }\ 
		\lambda_{v_2}
		=
	\|{v_2}_{xx}\| _{L^2}^2-	\|{v_2}_x\| _{L^2}^2. 
		\end{align*}
		We write
	\begin{align*}
		v_1{v_1}_x-v_2{v_2}_x
		=
		v_1 w_x+{v_2}_xw
	\ \text{ and }\ 
		\lambda_{v_1} v_1-\lambda_{v_2}v_2
		=
		\lambda_{v_1} w+(\lambda_{v_1}-\lambda_{v_2})v_2.
	\end{align*}
		Taking the $L^2$-inner product with $w$ gives
		\begin{align}\label{eqn-010}
		&	\frac12\frac{d}{dt}\|w\| _{L^2}^2
			+
			\|w_{xx}\| _{L^2}^2-\|w_x\| _{L^2}^2
			+
			(v_1 w_x,w)
			+
			({v_2}_xw,w)
	\nonumber\\&	=
			\lambda_{v_1}\|w\| _{L^2}^2
			+
			(\lambda_{v_1}-\lambda_{v_2})(v_2,w).
		\end{align}
		On every finite interval $[0,T]$, the $H^2$ bounds imply
		\begin{align}\label{eqn-008}
		\|v_1\|_{W^{1,\infty}}
		+
		\|v_2\|_{W^{1,\infty}}
		+
		|\lambda_{v_1}|+|\lambda_{v_2}|
		\le C_T.
		\end{align}
		Using the Cauchy--Schwarz and Young  inequalities, we find 
		\begin{align}\label{eqn-009}
		\|w_x\| _{L^2}^2=-\int_{\mathbb T_L}ww_{xx}dx\leq \|w\|_{L^2}\|w_{xx}\|_{L^2}
		\le
		\frac{1}{4}\|w_{xx}\| _{L^2}^2+\|w\| _{L^2}^2.
		\end{align}
	Using H\"older's, Gagliardo--Nirenberg's
	 and Young's inequalities, \eqref{eqn-009} and \eqref{eqn-008}, we deduce 
		\begin{align}
			|(v_1 w_x,w)|
		&	\le
			\|v_1\|_{L^{\infty}}\|w_x\| _{L^2}\|w\| _{L^2}\leq C_T\|w_{xx}\|_{L^2}^{1/2}\|w\|_{L^2}^{3/2}\le
			\frac14\|w_{xx}\| _{L^2}^2
			+
			C_T\|w\| _{L^2}^2,\\
				|({v_2}_xw,w)|&\leq \|{v_2}_{x}\|_{L^{\infty}}\|w\|_{L^2}^2
			\le
			C_T\|w\| _{L^2}^2.
		\end{align}
	Furthermore, we find 
		\begin{align*}
			|\lambda_{v_1}-\lambda_{v_2}|
			&\le 	(\|{v_1}_{xx}\| _{L^2}+\|{v_2}_{xx}\| _{L^2})\|w_{xx}\| _{L^2}+
			(\|{v_1}_x\| _{L^2}+\|{v_2}_x\| _{L^2})\|w_x\| _{L^2}
			\\
			&\le
			C_T\left(\|w_{xx}\| _{L^2}+\|w_{x}\| _{L^2}\right),
		\end{align*}
so that 
		\begin{align}
			|(\lambda_{v_1}-\lambda_{v_2})(v_2,w)|
			&\le
			C_T
			\left(
			\|w_x\| _{L^2}+\|w_{xx}\| _{L^2}
			\right)\|w\| _{L^2}
		\leq 
			\frac14\|w_{xx}\| _{L^2}^2
			+
			C_T\|w\| _{L^2}^2.
		\end{align}
		Finally, we have 
		\begin{align*}
		|\lambda_{v_1}|\|w\| _{L^2}^2
	\leq \left(	\|{v_1}_{xx}\| _{L^2}^2+	\|{v_1}_x\| _{L^2}^2\right)	\|w\| _{L^2}^2 \le C_T\|w\| _{L^2}^2.
		\end{align*}
		Combining the above estimates and using it in \eqref{eqn-010}, 
		we obtain
		\begin{align*}
		\frac{d}{dt}\|w(t)\| _{L^2}^2+\frac12\|w_{xx}\|_{L^2}^2
		\le
		C_T\|w(t)\| _{L^2}^2.
		\end{align*}
		Since
	$
		w(0)=0,
	$
		Gr\"onwall's inequality gives
	$
		w(t)=0
		\ \text{ for all }\ t\in[0,T].
		$
		Since $T>0$ is arbitrary,
	$
		v_1=v_2
		\ \text{ on }\ [0,\infty),
	$
		which completes the proof.
	\end{proof}

	\section{Proof of Theorem \ref{thm-local}}	\label{sec-4}
	
	In this section, we establish the global well-posedness of the two-dimensional KS equation by proving Theorem \ref{thm-local}.
	\begin{proposition}[Equivalence of the scalar and vector constrained formulations]\label{prop-equivalence-2}
		Let
	$
		\mathbb T_L^d=(\mathbb R/L\mathbb Z)^d,
		\  d\in\{2,3\}.
	$
		Assume that all functions are sufficiently smooth and periodic in the
		spatial variables. Let $\psi_0$ be a periodic scalar function and
		set
		\begin{align*}
		\boldsymbol{v}_0=\nabla\psi_0,
		\ 
		\|\nabla\psi_0\|_{L^2(\mathbb T_L^d;\mathbb{R}^d)}=1.
		\end{align*}
		Consider the scalar equation
		\begin{equation}\label{eq:scalar}
			\psi_t+\Delta^2\psi+\Delta\psi
			+\frac12|\nabla\psi|^2
			=
			\lambda_{\boldsymbol{v}}(t)\psi
		\end{equation}
		with
		\begin{align}\label{eqn-lambda}
		\lambda_{\boldsymbol{v}}(t)
		=
		\|\Delta \boldsymbol{v}(t)\|_{L^2}^2
		-\|\nabla \boldsymbol{v}(t)\|_{L^2}^2
		-\frac12
		\int_{\mathbb T_L^d}
		|\boldsymbol{v}(t)|^2\operatorname{div}\boldsymbol{v}(t) dx,
		\  t\in[0,T], \ 
		\boldsymbol{v}=\nabla\psi.
		\end{align}
		Then $\boldsymbol{v}=\nabla\psi$ satisfies
		\begin{equation}\label{eq:vector}
			\left\{
			\begin{aligned}
				\boldsymbol{v}_t+\Delta^2\boldsymbol{v}+\Delta \boldsymbol{v}+(\boldsymbol{v}\cdot\nabla)\boldsymbol{v}
				&=\lambda_{\boldsymbol{v}}(t)\boldsymbol{v},\\
				\boldsymbol{v}(0)&=\boldsymbol{v}_0,
			\end{aligned}
			\right.
		\end{equation}
		and
	\begin{equation}\label{eqn-invariant}
		\|\boldsymbol{v}(t)\|_{L^2}=1
		\ \text{ for all }\ t\geq 0
\end{equation}
		for which the solution exists.
		
		Conversely, suppose that $\boldsymbol{v}$ is a sufficiently regular periodic solution
		of \eqref{eq:vector} satisfying
		\begin{align*}
		\|\boldsymbol{v}_0\|_{L^2}=1,
		\ \
		\operatorname{curl}\boldsymbol{v}_0=\boldsymbol{0},
		\ \ 
		\int_{\mathbb T_L^d}\boldsymbol{v}_0(x) dx=\boldsymbol{0},
		\end{align*}
		and that the gradient structure is preserved, that is,
		\begin{align}\label{eqn-curl}
		\operatorname{curl}\boldsymbol{v}(t)=\boldsymbol{0},
		\ \ 
		\int_{\mathbb T_L^d}\boldsymbol{v}(t,x) dx=\boldsymbol{0}.
		\end{align}
		Then there exists a periodic scalar function $\psi$ such that
	$
		\boldsymbol{v}=\nabla\psi,
	$
		and, after adding a suitable function of time to $\psi$, the function
		$\psi$ satisfies \eqref{eq:scalar} and
		\begin{align*}
		\|\nabla\psi(t)\|_{L^2}=1 	\ \text{ for all }\ t\geq 0. 
		\end{align*}
		Thus the two formulations are equivalent on the class of periodic
		gradient vector fields.
	\end{proposition}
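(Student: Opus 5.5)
The plan is to mirror the proof of Proposition \ref{prop-equiv-1}. The two additional ingredients are the identity $(\boldsymbol v\cdot\nabla)\boldsymbol v=\nabla\bigl(\tfrac12|\boldsymbol v|^2\bigr)$ for gradient fields, and a Fourier construction of the potential $\psi$ in the converse direction.

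\emph{Forward direction.} Let $\psi$ solve \eqref{eq:scalar} and set $\boldsymbol v=\nabla\psi$. Since $\lambda_{\boldsymbol v}(t)$ depends only on $t$, we have $\nabla(\lambda_{\boldsymbol v}\psi)=\lambda_{\boldsymbol v}\boldsymbol v$. Also $\nabla$ commutes with $\partial_t$, $\Delta$ and $\Delta^2$ on the torus. Finally,
\begin{align*}
\nabla\Bigl(\tfrac12|\nabla\psi|^2\Bigr)=(\nabla\psi\cdot\nabla)\nabla\psi=(\boldsymbol v\cdot\nabla)\boldsymbol v,
\end{align*}
because $\partial_iv_j=\partial_i\partial_j\psi=\partial_jv_i$. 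Taking the gradient of \eqref{eq:scalar} therefore yields \eqref{eq:vector}, and $\boldsymbol v(0)=\nabla\psi_0=\boldsymbol v_0$.

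For the invariance \eqref{eqn-invariant}, take the $L^2$-inner product of \eqref{eq:vector} with $\boldsymbol v$. Periodic integration by parts gives $(\Delta^2\boldsymbol v,\boldsymbol v)=\|\Delta\boldsymbol v\|_{L^2}^2$, $(\Delta\boldsymbol v,\boldsymbol v)=-\|\nabla\boldsymbol v\|_{L^2}^2$, and
\begin{align*}
((\boldsymbol v\cdot\nabla)\boldsymbol v,\boldsymbol v)=-\tfrac12\int_{\mathbb T_L^d}|\boldsymbol v|^2\operatorname{div}\boldsymbol v\,dx .
\end{align*}
By \eqref{eqn-lambda}, the three spatial terms add up to exactly $\lambda_{\boldsymbol v}(t)$. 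Hence
\begin{align*}
\tfrac12\tfrac{d}{dt}\bigl(\|\boldsymbol v\|_{L^2}^2-1\bigr)=\lambda_{\boldsymbol v}(t)\bigl(\|\boldsymbol v\|_{L^2}^2-1\bigr).
\end{align*}
For a sufficiently regular solution, $\lambda_{\boldsymbol v}$ is continuous, hence locally integrable, on the existence interval. This linear ODE therefore has the unique solution $\|\boldsymbol v(t)\|_{L^2}^2-1=0$, since it vanishes at $t=0$.

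\emph{Converse direction.} First construct $\psi$ at each fixed time. Write $\boldsymbol v(t)=\sum_k\widehat{\boldsymbol v}_k(t)e_k$. The zero-mean condition in \eqref{eqn-curl} gives $\widehat{\boldsymbol v}_0=\boldsymbol 0$. The curl-free condition gives $k_i\widehat v_{k,j}=k_j\widehat v_{k,i}$ for all $i,j$, which in $d=2$ means the scalar curl vanishes. Thus $\widehat{\boldsymbol v}_k$ is parallel to $k$ for $k\neq0$. Define
\begin{align*}
\widehat\psi_k(t):=\frac{-i\,k\cdot\widehat{\boldsymbol v}_k(t)}{\frac{2\pi}{L}|k|^2},\quad k\neq0,\qquad \widehat\psi_0:=0.
\end{align*}
Then $\frac{2\pi}{L}ik\,\widehat\psi_k=\widehat{\boldsymbol v}_k$, so $\nabla\psi=\boldsymbol v$. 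The symmetry $\widehat\psi_{-k}=\overline{\widehat\psi_k}$ shows $\psi$ is real. Because this map is a bounded Fourier multiplier from $H^s$ to $H^{s+1}$, $\psi$ inherits the space–time regularity of $\boldsymbol v$; in particular $\psi_t$ exists and $\nabla\psi_t=\boldsymbol v_t$.

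Next substitute $\boldsymbol v=\nabla\psi$ into \eqref{eq:vector} and use the identity above in reverse. This gives
\begin{align*}
\nabla\Bigl[\psi_t+\Delta^2\psi+\Delta\psi+\tfrac12|\nabla\psi|^2-\lambda_{\boldsymbol v}\psi\Bigr]=\boldsymbol 0,
\end{align*}
so the bracket equals some $c(t)$ independent of $x$. As in Step 2 of Proposition \ref{prop-equiv-1}, replace $\psi$ by $\psi+h(t)$, where $h'-\lambda_{\boldsymbol v}h=-c$ and $h(0)=0$. This leaves $\nabla\psi$ and $\lambda_{\boldsymbol v}$ unchanged and removes $c(t)$, so $\psi$ satisfies \eqref{eq:scalar}. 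The constraint $\|\nabla\psi(t)\|_{L^2}=\|\boldsymbol v(t)\|_{L^2}=1$ follows from the forward computation, since $\boldsymbol v$ solves \eqref{eq:vector} with $\|\boldsymbol v_0\|_{L^2}=1$.

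\emph{Main obstacle.} The delicate point is not the algebra but the bookkeeping in the converse. One must check that the Fourier potential $\psi$ is differentiable in time with $\nabla\psi_t=\boldsymbol v_t$. One must also check that $c(t)$ is regular enough, for instance in $L^1_{\mathrm{loc}}$, for the ODE for $h$ to be solvable. I would obtain the latter by averaging the bracket over $\mathbb T_L^d$: $c(t)$ then equals the spatial mean of $\tfrac12|\nabla\psi|^2-\lambda_{\boldsymbol v}\psi+\psi_t$, which is controlled by the assumed regularity of $\boldsymbol v$.
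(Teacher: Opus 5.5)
Your proposal is correct and follows the paper's proof essentially step for step. It takes the gradient of the scalar equation using $(\boldsymbol v\cdot\nabla)\boldsymbol v=\tfrac12\nabla|\nabla\psi|^2$, pairs with $\boldsymbol v$ in $L^2$ to get the linear ODE for $\|\boldsymbol v\|_{L^2}^2-1$, builds the potential in Fourier variables from the curl-free and zero-mean conditions, and removes $c(t)$ by a time-dependent shift solving $h'-\lambda_{\boldsymbol v}h=-c$. Your explicit formula for $\widehat\psi_k$, the reality check, and the regularity bookkeeping for $\psi_t$ and $c(t)$ are somewhat more careful than the paper's treatment, but they do not change the route.
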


	\begin{proof}
		We divide the proof into several steps.
		
		\smallskip
		\noindent
		\textbf{Step 1:} \emph{From the scalar equation to the vector equation.}
			Let
	$\boldsymbol{v}=\nabla\psi.$
		Taking the spatial gradient of \eqref{eq:scalar}, we obtain
		\begin{align*}
		\nabla\psi_t+\nabla\Delta^2\psi
		+\nabla\Delta\psi
		+\frac12\nabla|\nabla\psi|^2
		=
		\lambda_{\boldsymbol{v}}(t)\nabla\psi.
		\end{align*}
		Since
	$
		\nabla\psi_t=\boldsymbol{v}_t,\
		\nabla\Delta^2\psi=\Delta^2\boldsymbol{v},
		\ 
		\nabla\Delta\psi=\Delta \boldsymbol{v},
	$
		it remains to consider the nonlinear term.
			For $k=1,\ldots,d$, we have 
		\begin{align*}
		\begin{aligned}
			\left[(\boldsymbol{v}\cdot\nabla)\boldsymbol{v}\right]_k
			&=
			\sum_{\ell=1}^d v_\ell\frac{\partial v_k}{\partial x_\ell} =
			\sum_{\ell=1}^d
			\frac{\partial\psi}{\partial x_\ell}
			\frac{\partial^2\psi}{\partial x_{\ell}\partial x_k}=
			\frac12\frac{\partial}{\partial x_k}
			\left(
			\sum_{\ell=1}^d
			\left(\frac{\partial \psi}{\partial_\ell}\right)^2
			\right)=
			\frac12\frac{\partial}{\partial x_k}|\nabla\psi|^2.
		\end{aligned}
		\end{align*}
		Therefore,
	$
		(\boldsymbol{v}\cdot\nabla)\boldsymbol{v}
		=
		\frac12\nabla|\nabla\psi|^2.
	$
		Consequently, we obtain  \eqref{eq:vector} and  since
		$
		\boldsymbol{v}(0)=\nabla\psi_0=\boldsymbol{v}_0,
	$
		the initial condition is also preserved.
		
		\smallskip
		\noindent
		\textbf{Step 2:} \emph{The initial constraint implies the constraint for all
			times.} 	We assume that 
		$
		\|\boldsymbol{v}_0\|_{L^2}=1.
	$
		We now prove that
	$
		\|\boldsymbol{v}(t)\|_{L^2}=1\ \text{ for all }\ t\geq 0
	$
		is preserved by the evolution.
		Take the $L^2$-inner product of the vector equation with
		$\boldsymbol{v}$. Using periodicity and integration by parts, we get 
		\begin{align*}
		(\boldsymbol{v},\Delta^2\boldsymbol{v})_{L^2}
		=
		\|\Delta \boldsymbol{v}\|_{L^2}^2
	 \ \text{ and }\ 
		(\boldsymbol{v},\Delta \boldsymbol{v})_{L^2}
		=
		-\|\nabla \boldsymbol{v}\|_{L^2}^2.
		\end{align*}
		Furthermore, we have 
		\begin{align*}
			\int_{\mathbb T_L^d}
			(\boldsymbol{v}\cdot\nabla)\boldsymbol{v}\cdot \boldsymbol{v} dx
			=
			\frac12
			\int_{\mathbb T_L^d}
			\boldsymbol{v}\cdot\nabla |\boldsymbol{v}|^2 dx=
			-\frac12
			\int_{\mathbb T_L^d}
			|\boldsymbol{v}|^2\operatorname{div}\boldsymbol{v} dx.
		\end{align*}
		Hence, we obtain 
		\begin{align*}
		\frac12\frac{d}{dt}\|\boldsymbol{v}(t)\|_{L^2}^2
		+
		\|\Delta \boldsymbol{v}(t)\|_{L^2}^2
		-
		\|\nabla \boldsymbol{v}(t)\|_{L^2}^2
		-
		\frac12
		\int_{\mathbb T_L^d}
		|\boldsymbol{v}(t)|^2\operatorname{div}\boldsymbol{v}(t) dx
		=
		\lambda_{\boldsymbol{v}}(t)\|\boldsymbol{v}(t)\|_{L^2}^2,
		\end{align*}
	for all $t\in[0,T]$.	By the definition of $\lambda_{\boldsymbol{v}}(t)$ given in \eqref{eqn-lambda}, we deduce  for all $t\in[0,T]$ that 
		\begin{align*}
		\frac12\frac{d}{dt}\left(\|\boldsymbol{v}(t)\|_{L^2}^2-1\right)
		=
		\lambda_{\boldsymbol{v}}(t)
		\left(
		\|\boldsymbol{v}(t)\|_{L^2}^2-1
		\right).
		\end{align*}
		The unique solution of this scalar linear ODE with initial condition
		$\|\boldsymbol{v}(0)\|_{L^2}^2-1=0$ is
		$
		\|\boldsymbol{v}(t)\|_{L^2}^2-1=0
	$ for all $t\geq 0$. 
		Consequently, we deduce \eqref{eqn-invariant}.
		Thus the unit $L^2$-sphere is invariant under the vector evolution.
		
		\smallskip
		\noindent
		\textbf{Step 3:} \emph{The gradient structure and the zero-mean condition.}
		Since
	$
		\boldsymbol{v}=\nabla\psi,
	$
		we automatically have
	$
		\operatorname{curl}\boldsymbol{v}=0.
	$
		Moreover, periodicity of $\psi$ implies
		\begin{align*}
		\int_{\mathbb T_L^d}\boldsymbol{v}(t,x) dx
		=
		\int_{\mathbb T_L^d}\nabla\psi(t,x) dx
		=\boldsymbol{0}.
		\end{align*}
		Thus the conditions given in \eqref{eqn-curl} are satisfied. 
		These conditions are important in the converse direction because, on
		the torus, they characterize the periodic gradient fields.
		
		\smallskip
		\noindent
		\textbf{Step 4:} \emph{From the vector equation back to a scalar potential.}
		Now suppose that $\boldsymbol{v}$ is a sufficiently regular periodic solution of
		\eqref{eq:vector} satisfying	\eqref{eqn-curl}.
		For $d=2$, the first condition in \eqref{eqn-curl} means
	$
		\partial_1v_2-\partial_2v_1=0.
	$
		For $d=3$, it means
	$
		\nabla\times \boldsymbol{v}=0.
	$
		We claim that there exists a periodic scalar function $\psi$ such
		that
	$
		\boldsymbol{v}=\nabla\psi.
	$
		Indeed, write the Fourier expansion
		\begin{align*}
	\boldsymbol{v}(t,x)
		=
		\sum_{k\in\mathbb Z^d}
		\widehat{\boldsymbol{v}}_k(t)
		e^{2\pi i k\cdot x/L}.
		\end{align*}
		The zero-mean condition gives
	$
		\widehat{\boldsymbol{v}}_0(t)=0.
	$
		The curl-free condition gives, for every $k\neq0$,
	$
		k\times\widehat{\boldsymbol{v}}_k(t)=\boldsymbol{0}.
	$
		Hence $\widehat{\boldsymbol{v}}_k(t)$ is parallel to $k$. Thus there exists a scalar
		$\widehat\psi_k(t)$ such that
		\begin{align*}
		\widehat{\boldsymbol{v}}_k(t)
		=
		\frac{2\pi i}{L}
		k\,\widehat\psi_k(t).
		\end{align*}
	By	defining
		\begin{align*}
		\psi(t,x)
		=
		\sum_{k\neq0}
		\widehat\psi_k(t)
		e^{2\pi i k\cdot x/L}
		\end{align*}
		gives a periodic scalar function satisfying
	$
		\boldsymbol{v}=\nabla\psi.
	$
		
		\smallskip
		\noindent
		\textbf{Step 5:} \emph{Recovering the scalar equation.}
			Since
	$
		\boldsymbol{v}=\nabla\psi,
	$
		we have
		\begin{align*}
			\boldsymbol{v}_t=\nabla\psi_t,
		\ \
		\Delta^2\boldsymbol{v}=\nabla\Delta^2\psi,
		\ \
		\Delta\boldsymbol{v}=\nabla\Delta\psi \ \text{ and }\ 	(\boldsymbol{v}\cdot\nabla)\boldsymbol{v}
		=
		\frac12\nabla|\nabla\psi|^2.
		\end{align*}
		Therefore the vector equation \eqref{eq:vector} becomes
		\begin{align*}
		\nabla\psi_t
		+\nabla\Delta^2\psi
		+\nabla\Delta\psi
		+\frac12\nabla|\nabla\psi|^2
		=
		\lambda_{\boldsymbol{v}}(t)\nabla\psi.
		\end{align*}
		Equivalently, we have 
		\begin{align*}
		\nabla
		\left[
		\psi_t+\Delta^2\psi+\Delta\psi
		+\frac12|\nabla\psi|^2
		-\lambda_{\boldsymbol{v}}(t)\psi
		\right]
		=0.
			\end{align*}
		Hence there exists a function $c(t)$, independent of $x$, such that
			\begin{align*}
		\psi_t+\Delta^2\psi+\Delta\psi
		+\frac12|\nabla\psi|^2
		=
		\lambda_{\boldsymbol{v}}(t)\psi+c(t).
			\end{align*}
		Since a potential is determined only up to an additive function of
		time, define
		\begin{align*}
		\widetilde\psi(t,x)
		=
		\psi(t,x)+q(t),
		\end{align*}
		where $q$ solves
		\begin{align*}
		q'(t)-\lambda_{\boldsymbol{v}}(t)q(t)=-c(t).
		\end{align*}
		Then
		$
		\nabla\widetilde\psi=\nabla\psi=\boldsymbol{v},
	$
		and
		\begin{align*}
			&\widetilde\psi_t
			+\Delta^2\widetilde\psi
			+\Delta\widetilde\psi
			+\frac12|\nabla\widetilde\psi|^2
			-\lambda(t)\widetilde\psi=
			c(t)+q'(t)-\lambda_{\boldsymbol{v}}(t)q(t)
			=0.
		\end{align*}
		Thus \eqref{eq:scalar} is satisfied  and by Step 2, we infer 
	$
		\|\boldsymbol{v}(t)\|_{L^2}=1,
	$ for all $t\geq 0$. 
		Since $\boldsymbol{v}=\nabla\widetilde\psi$,
		$
			\|\nabla\widetilde\psi(t)\|_{L^2}=1$ for all $t\geq 0$. 
		Thus $\widetilde\psi$ satisfies the scalar constrained problem.
		
		Therefore, for $d=2,3$, the scalar constrained formulation and the
		vector formulation are equivalent on the class of periodic gradient
		fields satisfying \eqref{eqn-curl}
		with the constraint imposed only at the initial time:
		$
		\|\boldsymbol{v}_0\|_{L^2}=1.
	$
		The constraint is then preserved by the evolution.
	\end{proof}

The following result establishes the local well-posedness of the KS equation in dimensions $d=2,3$ using the Banach fixed-point theorem. A similar approach can be found in \cite{AmbroseMazzucato2019}.

	\begin{proof}[Proof of Theorem \ref{thm-local}]
		We proceed the local existence and uniqueness proof in several steps.

			\smallskip
		\noindent
		\textbf{Sep 1:} \emph{Local existence.} 
		Let us define 
		\begin{align}\label{eqn-space}
		\mathcal{X}_T:=C([0,T];H^2(\mathbb T_L^d;\mathbb{R}^d)),\ \
		\norm{\boldsymbol{v}}_{\mathcal{X}_T}=\sup_{0\le t\le T}\norm{\boldsymbol{v}(t)}_{H^2}.
		\end{align}
		Let us choose $R>0$ sufficiently large such that
		\begin{align*}
		C_1\|\boldsymbol{v}_0\|_{H^2}<\frac{R}{2},
		\end{align*}
		where $C_1>0$ is the constant in \eqref{eq:SA-H2-H2}, satisfying
		\begin{align*}
		\|S_A(t)\boldsymbol{v}\|_{H^2}
		\le C_1\|\boldsymbol{v}\|_{H^2},
		\  0\le t\le1.
		\end{align*}
		We then define
		\[
		\mathcal B_R
		:=
		\left\{
		\boldsymbol{v}\in\mathcal X_T:
		\|\boldsymbol{v}\|_{\mathcal X_T}\le R
		\right\}.
		\]
		Let us define
		\begin{align*}
		(\Phi \boldsymbol{v})(t)=S_A(t)\boldsymbol{v}_0+\int_0^tS_A(t-s)F(\boldsymbol{v}(s)) ds.
		\end{align*}
		For $\boldsymbol{v}\in\mathcal B_R$, it follows from \eqref{eq:SA-L2-H2} and \eqref{eq:FLip-2} that 
		\begin{align}
			\norm{\Phi \boldsymbol{v}(t)}_{H^2}&\leq \norm{S_A(t)\boldsymbol{v}_0}_{H^2}+\int_0^t\norm{S_A(t-s)F(\boldsymbol{v}(s))}_{H^2}ds\nonumber\\
			&\le C_1\norm{\boldsymbol{v}_0}_{H^2}
			+C_1C_RR\int_0^t(t-s)^{-1/2} ds\\
			&\le C_1\norm{\boldsymbol{v}_0}_{H^2}+2C_1C_RRt^{1/2}.
		\end{align}
		Therefore, we have 
		\begin{align}
		\norm{\Phi \boldsymbol{v}}_{\mathcal{X}_T}
		\leq
		C_1\norm{\boldsymbol{v}_0}_{H^2}
		+
		2C_1C_RRT^{1/2}.
		\end{align}
		Since
	$
		C_1\norm{\boldsymbol{v}_0}_{H^2}<\frac R2,
	$
		we choose $T>0$ sufficiently small that
	$
		2C_1C_RRT^{1/2}<\frac R2.
	$
		Then, we deduce 
	$
		\norm{\Phi \boldsymbol{v}}_{\mathcal{X}_T}\leq R,
	$
	so that 
	$
		\Phi(\mathcal B_R)\subseteq\mathcal B_R.
	$

		Let us take $\boldsymbol{v},\boldsymbol{w}\in\mathcal B_R$. Then, we infer 
		\begin{align}
		\Phi \boldsymbol{v}(t)-\Phi \boldsymbol{w}(t)
		=
		\int_0^t
		S_A(t-s)
		\bigl(F(\boldsymbol{v}(s))-F(\boldsymbol{w}(s))\bigr) ds.
		\end{align}
		Using \eqref{eq:SA-L2-H2} and \eqref{eq:FLip-1},  we obtain 
		\begin{align}\label{eqn-est-5}
			\norm{\Phi \boldsymbol{v}(t)-\Phi \boldsymbol{w}(t)}_{H^2}
			&\leq
			C_1C_{R}
			\int_0^t
			(t-s)^{-1/2}
			\norm{\boldsymbol{v}(s)-\boldsymbol{w}(s)}_{H^2} ds\\
			&\leq
			2C_1C_{R}t^{1/2}
			\norm{\boldsymbol{v}-\boldsymbol{w}}_{\mathcal{X}_T}.
		\end{align}
		Consequently, we deduce 
		\begin{align*}
		\norm{\Phi \boldsymbol{v}-\Phi \boldsymbol{w}}_{\mathcal{X}_T}
		\leq
		2C_1C_{R}T^{1/2}
		\norm{\boldsymbol{v}-\boldsymbol{w}}_{\mathcal{X}_T}.
		\end{align*}
		Choose $T$ smaller, if necessary, so that
	$
		2C_1C_{R}T^{1/2}<1.
	$
		Then $\Phi$ is a contraction on $\mathcal B_R$.
		
		By the \emph{Banach contraction mapping theorem}, there exists a unique
		$
		\boldsymbol{v}\in\mathcal B_R
	$
		such that
	$
		\Phi \boldsymbol{v}=\boldsymbol{v}.
	$
		Therefore
		\begin{equation}\label{eq:mild}
			\boldsymbol{v}(t)
			=
			S_A(t)\boldsymbol{v}_0
			+
			\int_0^tS_A(t-s)F(\boldsymbol{v}(s)) ds.
		\end{equation}
It follows form \eqref{eqn-space} that 
	$
		\boldsymbol{v}\in C([0,T];H^2(\mathbb T_L^d;\mathbb{R}^d)).
	$

			\smallskip
		\noindent
		\textbf{Sep 2:} \emph{Additional regularity.}  
			It remains to prove the additional regularity. By Step 1, the fixed point $\boldsymbol{v}$ belongs to the ball $\mathcal{B}_R$. Let us put 
	$$
		G=F(\boldsymbol{v})+\ell\boldsymbol{v}, 
	$$
	where $\ell$ is the same constant defined in \eqref{eqn-sel-b}.	Since
	$\boldsymbol{v}\in \mathcal{B}_R$, estimate \eqref{eq:FLip-2} yields
$
	F(\boldsymbol{v})\in L^2(0,T;L^2(\mathbb T_L^d;\mathbb{R}^d)).
$
	Thus, we infer that $G \in L^2(0,T;L^2(\mathbb T_L^d;\mathbb{R}^d))$ and
		\begin{align}
		\boldsymbol{v}_t(t)+B \boldsymbol{v}(t)=G(t) \ \text{ in } \ L^2(\mathbb T_L^d;\mathbb{R}^d),
		\end{align}
	for a.e. $t\in(0,T)$.	In Fourier variables,  writing
	$
	G(x,t)=\sum_{k\in\mathbb Z^d}\widehat{G}_k(t)e_k(x),
	\ 
	\widehat{G}_k(t)=(G(t),e_k),
$ and using 	$\boldsymbol{v}(x,t)=\sum_{k\in\mathbb Z^d}\widehat{\boldsymbol{v}}_k(t)e_k(x),$ the above equation leads to 
		\begin{align*}
		\widehat{\boldsymbol{v}}_k'(t)+\beta_k\widehat{\boldsymbol{v}}_k(t)=\widehat{G}_k(t),
		\end{align*}
	where $\beta_k$ is defined in \eqref{eqn-betak}. 	Multiplying the equation by
	$\beta_k\overline{\widehat{\boldsymbol{v}}}_k$, taking the real part,
	and then applying the Cauchy--Schwarz and Young inequalities, we obtain
		\begin{align}
		\frac12\frac d{dt}\bigl(\beta_k|\widehat{\boldsymbol{v}}_k(t)|_{\mathbb{C}^d}^2\bigr)
		+\beta_k^2|\widehat{\boldsymbol{v}}_k(t)|_{\mathbb{C}^d}^2 &=\beta_k\operatorname{Re}\left(\widehat{G}_k(t)\overline{\widehat{\boldsymbol{v}}}_k(t)\right)\leq \beta_k| \widehat{G}_k(t)||\overline{\widehat{\boldsymbol{v}}}_k(t)|\nonumber\\&\leq  \frac12 \beta_k^2|\widehat{\boldsymbol{v}}_k(t)|_{\mathbb{C}^d}^2+
		\frac12|\widehat{G}_k(t)|_{\mathbb{C}^d}^2,
		\end{align}
	for a.e. $t\in[0,T]$.	After integration over time and summation, we deduce 
		\begin{align*}
		\sup_{0\le t\le T}\sum_{k\in\mathbb Z^d}\beta_k|\widehat{\boldsymbol{v}}_k(t)|_{\mathbb{C}^d}^2
		+ \int_0^T\sum_{k\in\mathbb Z^d}\beta_k^2|\widehat{\boldsymbol{v}}_k(t)|_{\mathbb{C}^d}^2 dt
		\le
		\sum_{k\in\mathbb Z^d}\beta_k|\widehat{\boldsymbol{v}}_{0,k}|_{\mathbb{C}^d}^2+\norm G_{L^2(0,T;L^2)}^2,
		\end{align*}
	where we have used Parseval's identity.	Since $\beta_k\asymp(1+|\xi_k|^2)^2$ (see \eqref{eqn-est-1}), we deduce 
	$
		\sum_{k\in\mathbb Z^d}\beta_k|\widehat{\boldsymbol{v}}_k|_{\mathbb{C}^d}^2
		\asymp
		\norm{\boldsymbol{v}}_{H^2}^2
	$
		and
	$
		\sum_{k\in\mathbb Z^d}\beta_k^2|\widehat{\boldsymbol{v}}_k|_{\mathbb{C}^d}^2
		\asymp
		\norm{\boldsymbol{v}}_{H^4}^2,
	$ and using the fact that $\boldsymbol{v}_{0}\in H^2(\mathbb T_L^d;\mathbb{R}^d)$, we  infer from the above estimate that 
		\begin{align*}
		\boldsymbol{v}\in L^2(0,T;H^4(\mathbb T_L^d;\mathbb{R}^d))
		\quad\text{and}\quad
		B\boldsymbol{v}\in L^2(0,T;L^2(\mathbb T_L^d;\mathbb{R}^d)).
		\end{align*}
		Therefore, we get 
	$
		\boldsymbol{v}_t=G-B\boldsymbol{v}\in L^2(0,T;L^2(\mathbb T_L^d;\mathbb{R}^d)),
	$
		and hence
	$$
	\boldsymbol{v}		\in
	C([0,T];H^2(\mathbb T_L^d;\mathbb{R}^d))
	\cap
	H^1(0,T;L^2(\mathbb T_L^d;\mathbb{R}^d))
	\cap
	L^2(0,T;H^4(\mathbb T_L^d;\mathbb{R}^d)).
	$$
		Since \(\boldsymbol{v}_0\in\mathcal{M}\), an argument analogous to that used in Step 2 of the proof of Proposition \ref{prop-equivalence-2} shows that
		$ \boldsymbol{v}(t)\in\mathcal{M} $ for all $t\in[0,T]$. Thus, the manifold \(\mathcal{M}\) is invariant under the evolution.
		
			\smallskip
		\noindent
		\smallskip
	\noindent
	\textbf{Step 3:} \emph{Continuous dependence and uniqueness.}
	Let $\boldsymbol{v}$ and $\boldsymbol{w}$ be two solutions of
	\eqref{eq:KSE}, corresponding, respectively, to the initial data
	$\boldsymbol{v}_0$ and $\boldsymbol{w}_0$, such that
	\begin{align*}
		\boldsymbol{v},\boldsymbol{w}
		\in
		C([0,T];H^2(\mathbb T_L^d;\mathbb{R}^d)\cap\mathcal{M})
		\cap
		H^1(0,T;L^2(\mathbb T_L^d;\mathbb{R}^d))
		\cap
		L^2(0,T;H^4(\mathbb T_L^d;\mathbb{R}^d)).
	\end{align*}
	Fix $R>0$ such that
	\begin{align*}
		\|\boldsymbol{v}_0\|_{H^2}
		+
		\|\boldsymbol{w}_0\|_{H^2}
		\le R.
	\end{align*}
	For $T$ sufficiently small, both solutions remain in a fixed bounded
	subset of $C([0,T];H^2(\mathbb T_L^d;\mathbb{R}^d))$. Their difference satisfies
	\begin{align*}
	\begin{aligned}
		\boldsymbol{v}(t)-\boldsymbol{w}(t)
		=
		S_A(t)(\boldsymbol{v}_0-\boldsymbol{w}_0)+
		\int_0^t
		S_A(t-s)
		\bigl(F(\boldsymbol{v}(s))-F(\boldsymbol{w}(s))\bigr) ds,
	\end{aligned}
	\end{align*}
for all $t\in[0,T]$. Calculations similar to \eqref{eq:SA-H2-H2} and \eqref{eqn-est-5} yield 
	\begin{align}\label{eqn-est-4}
	\norm{\boldsymbol{v}-\boldsymbol{w}}_{\mathcal{X}_T}
	\leq
	C_1\norm{\boldsymbol{v}_0-\boldsymbol{w}_0}_{H^2}
	+
	2C_1C_RT^{1/2}
	\norm{\boldsymbol{v}-\boldsymbol{w}}_{\mathcal{X}_T}.
	\end{align}
	Choose $T$ so that
	$
	2C_1C_RT^{1/2}\leq\frac12. 
$
Then, from \eqref{eqn-est-4}, we deduce 
	\begin{align}\label{eqn-est-6}
	\norm{\boldsymbol{v}-\boldsymbol{w}}_{\mathcal{X}_T}
	\leq
	2C_1\norm{\boldsymbol{v}_0-\boldsymbol{w}_0}_{H^2}.
	\end{align}
	Hence the solution map is locally Lipschitz from $H^2$ into
	$C([0,T];H^2(\mathbb T_L^d;\mathbb{R}^d))$.
	
	Let $[0,T_*]$ be any common interval of existence. Since
$
	\boldsymbol{v},\boldsymbol{w}\in C([0,T_*];H^2(\mathbb T_L^d;\mathbb{R}^d)),
$
	their $H^2$ norms are bounded on $[0,T_*]$. Hence there exists
	$R>0$ such that
	\begin{align*}
	\sup_{0\leq t\leq T_*}
	\left(
	\|\boldsymbol{v}(t)\|_{H^2}+\|\boldsymbol{w}(t)\|_{H^2}
	\right)\leq R.
	\end{align*}
	Choose $\delta>0$ such that
$
	2C_1C_R\delta^{1/2}<1.
$
For $\boldsymbol{v}_0= \boldsymbol{w}_0$, the contraction estimate \eqref{eqn-est-4} shows that $\boldsymbol{v}=\boldsymbol{w}$ on $[0,\delta]$. Suppose now that $\boldsymbol{v}=\boldsymbol{w}$ on $[0,t_j]$. Using $\boldsymbol{v}(t_j)=\boldsymbol{w}(t_j)$
	as the initial datum and applying the same estimate on
	$[t_j,t_j+\delta]$, we obtain
$
	\boldsymbol{v}=\boldsymbol{w}
	\ \text{ on }\ [t_j,t_j+\delta].
$
	Repeating finitely many times covers $[0,T_*]$, so that 
$
		\boldsymbol{v}=	\boldsymbol{w} \ \text{ on }\ [0,T_*].
$

	\smallskip
	\noindent
	\textbf{Step 4:} \emph{Continuation criterion.}
	Let $T_{\max}$ denote the maximal existence time.
	Suppose that
$
	T_{\max}<\infty
$
	but
$
	\sup_{0\leq t<T_{\max}}
	\norm{	\boldsymbol{v}(t)}_{H^2}
	\leq R.
$
	The fixed-point construction above depends only on the bound $R$ and
	the torus size $L$. Hence there exists
$
	\tau_R>0
$
	such that every initial datum $	\boldsymbol{v}_0\in H^2$ satisfying
$
	\norm{	\boldsymbol{v}_0}_{H^2}\leq R
$
	generates a unique solution on an interval of length $\tau_R$.
		Choose
$
	t_0<T_{\max}
$
	so close to $T_{\max}$ that
$
	T_{\max}-t_0<\tau_R.
$
	Since
$
	\norm{	\boldsymbol{v}(t_0)}_{H^2}\leq R,
$
	the local existence result applied to the initial datum $	\boldsymbol{v}(t_0)$ gives
	a solution on
$
	[t_0,t_0+\tau_R].
$
	By uniqueness, this solution coincides with the original solution on
	$
	[t_0,T_{\max}).
$
	But
$
	t_0+\tau_R>T_{\max},
$
	so the new solution extends the original solution beyond $T_{\max}$.
	This contradicts the maximality of $T_{\max}$. Therefore, we conclude 
	$$	T_{\max}<\infty
		\ \Longrightarrow\ 
		\limsup_{t\uparrow T_{\max}}
		\norm{	\boldsymbol{v}(t)}_{H^2}
		=
		\infty,
		$$
		which completes the proof. 
	\end{proof}

		\section{Proof of Theorem \ref{thm-main}}\label{sec-5}
	
Based on the local well-posedness established in Theorem \ref{thm-local}, we prove in this section the global well-posedness of the two-dimensional constrained KS equation \eqref{eq:KSE}.

	\begin{proof}[Proof of Theorem \ref{thm-main}]
The local existence of a strong solution to problem \eqref{eq:KSE} is established in Theorem \ref{thm-local}. We have also shown that this local solution is unique and can be continued as long as its $H^2$ norm remains finite. Consequently, the global $H^2$ estimate proved below implies that $T=T_{\max}=\infty.$
The preservation of the $L^2$ constraint was established in Step 2 of the proof of Proposition \ref{prop-equivalence-2}. Therefore, we have 
		\begin{equation}\label{eq:constraint}
				\norm{\boldsymbol{v}(t)} _{L^2}=1\ \text{	for all }\ 0\leq t<T. 
		\end{equation}
		In particular, we derive 
		\begin{equation}\label{eq:orthogonal}
			(\boldsymbol{v}(t),\boldsymbol{v}_t(t))
			=
			\frac12\frac{d}{dt}\norm{\boldsymbol{v}(t)} _{L^2}^2
			=0,
		\end{equation}
		for a.e. $t\in(0,T)$. 
	From the local existence result (Theorem \ref{thm-local}), we have the regularity result given in \eqref{eqn-reg-1}. 
	We may therefore perform the following calculations.
			Taking the $L^2$-inner product of \eqref{eq:KSE} with $\boldsymbol{v}_t(\cdot)$ gives
		\begin{align}\label{eqn-ener-1}
	&	\norm{\boldsymbol{v}_t(t)} _{L^2}^2
		+
		(\Delta^2\boldsymbol{v}(t),\boldsymbol{v}_t(t))
		+
		(\Delta \boldsymbol{v}(t),\boldsymbol{v}_t(t))
		+
		((\boldsymbol{v}(t)\cdot\nabla)\boldsymbol{v}(t),\boldsymbol{v}_t(t))
	\nonumber\\&	=
		\lambda_{\boldsymbol{v}}(\boldsymbol{v}(t),\boldsymbol{v}_t(t)),
		\end{align}
	for a.e. $t\in(0,T)$.	Using  \eqref{eq:orthogonal}, we infer for a.e. $t\in(0,T)$  that 
		\begin{align*}
		\lambda_{\boldsymbol{v}}(\boldsymbol{v}(t),\boldsymbol{v}_t(t))=0.
		\end{align*}
			By periodic integration by parts, we have 
		\begin{align}
			(\Delta^2\boldsymbol{v},\boldsymbol{v}_t)
			=
			(\Delta \boldsymbol{v},\Delta \boldsymbol{v}_t)
			=
			\frac12\frac{d}{dt}
			\norm{\Delta \boldsymbol{v}} _{L^2}^2\ \text{ and }\ 
			(\Delta \boldsymbol{v},\boldsymbol{v}_t)
			=
			-(\nabla \boldsymbol{v},\nabla \boldsymbol{v}_t)=
			-\frac12\frac{d}{dt}
			\norm{\nabla \boldsymbol{v}} _{L^2}^2.
		\end{align}
		Consequently, we deduce from \eqref{eqn-ener-1} that 
		\begin{equation}\label{eq:energyidentity}
			\frac12\frac{d}{dt}
			\left(
			\norm{\Delta \boldsymbol{v}(t)} _{L^2}^2
			-
			\norm{\nabla \boldsymbol{v}(t)} _{L^2}^2
			\right)
			+
			\norm{\boldsymbol{v}_t(t)} _{L^2}^2
			=
			-((\boldsymbol{v}(t)\cdot\nabla)\boldsymbol{v}(t),\boldsymbol{v}_t(t)),
		\end{equation}
	for a.e. $t\in(0,T)$.	Let us 	define
		\begin{align}\label{eqn-def-e}
		E(t)
		:=
		\frac12
		\left(
		\norm{\Delta \boldsymbol{v}(t)} _{L^2}^2
		-
		\norm{\nabla \boldsymbol{v}(t)} _{L^2}^2
		\right), \ t\in[0,T]. 
		\end{align}
		Then, it follows from  \eqref{eq:energyidentity} that 
		\begin{equation}\label{eq:Eidentity}
			E'(t)+\norm{\boldsymbol{v}_t(t)} _{L^2}^2
			=
			-((\boldsymbol{v}(t)\cdot\nabla)\boldsymbol{v}(t),\boldsymbol{v}_t(t)),
		\end{equation}
		for a.e. $t\in[0,T]$.  In order to estimate the nonlinear term on the right hand side of \eqref{eq:Eidentity}, we use the following two-dimensional Agmon inequality: 
		\begin{equation*}
			\norm{\boldsymbol{v}}_{L^\infty}
			\leq
			C_L
			\norm{\boldsymbol{v}} _{L^2}^{1/2}
			\norm{\boldsymbol{v}}_{H^2}^{1/2}.
		\end{equation*}
		By \eqref{eq:constraint}, we know that 
	$
		\norm{\boldsymbol{v}} _{L^2}=1,
	$
	and hence 
		\begin{align}\label{eq:Agmon}
		\norm{\boldsymbol{v}}_{L^\infty}
		\leq
		C_L\norm{\boldsymbol{v}}_{H^2}^{1/2}.
		\end{align}
		On the other hand, periodic integration by parts gives
		\begin{align}
		\norm{\nabla \boldsymbol{v}} _{L^2}^2
		=
		-(\boldsymbol{v},\Delta \boldsymbol{v})
		\leq
		\norm{\boldsymbol{v}} _{L^2}\norm{\Delta \boldsymbol{v}} _{L^2}
		=
		\norm{\Delta \boldsymbol{v}} _{L^2},
		\end{align}
	where we have used  \eqref{eq:constraint}. 	Thus, it is immediate that 
		\begin{equation}\label{eq:grad-est}
			\norm{\nabla \boldsymbol{v}} _{L^2}
			\leq
			\norm{\Delta \boldsymbol{v}} _{L^2}^{1/2}.
		\end{equation}
			Moreover, we infer from \eqref{eqn-est-12} that 
		\begin{align}\label{eqn-ener-6}
		\norm{\boldsymbol{v}}_{H^2}
		\leq
		C_L
		\left(
		\norm{\boldsymbol{v}} _{L^2}+\norm{\Delta \boldsymbol{v}} _{L^2}
		\right)
		=
		C_L(1+\norm{\Delta \boldsymbol{v}} _{L^2}).
		\end{align}
		Therefore, we deduce from \eqref{eq:Agmon} that 
	\begin{align}\label{eqn-ener-2}
		\norm{\boldsymbol{v}}_{L^{\infty}}
		\leq
		C_L
		(1+\norm{\Delta \boldsymbol{v}} _{L^2})^{1/2}.
		\end{align}
		It follows by using H\"older's inequality, \eqref{eq:grad-est} and \eqref{eqn-ener-2}  that
		\begin{align*}
		\begin{aligned}
			\norm{(\boldsymbol{v}\cdot\nabla)\boldsymbol{v}} _{L^2}
			&\leq
			\norm{\boldsymbol{v}}_{L^\infty}\norm{\nabla \boldsymbol{v}} _{L^2}\leq
			C_L
			(1+\norm{\Delta \boldsymbol{v}} _{L^2})^{1/2}
			\norm{\Delta \boldsymbol{v}} _{L^2}^{1/2}.
		\end{aligned}
		\end{align*}
		For every $a\geq0$, we know that 
	$
		(1+a)^{1/2}a^{1/2}
		\leq C(1+a).
	$
		Consequently, we have 
		\begin{equation}\label{eq:nonlinear}
				\norm{(\boldsymbol{v}\cdot\nabla)\boldsymbol{v}} _{L^2}
				\leq
				C_L(1+\norm{\Delta \boldsymbol{v}} _{L^2}).
		\end{equation}
	Therefore, we estimate the right hand side of \eqref{eq:Eidentity} as 
		\begin{align}\label{eqn-cruicial}
			|((\boldsymbol{v}\cdot\nabla)\boldsymbol{v},\boldsymbol{v}_t)|
			\leq
			C_L(1+\norm{\Delta \boldsymbol{v}} _{L^2})\norm{\boldsymbol{v}_t} _{L^2}\leq
			\frac12\norm{\boldsymbol{v}_t} _{L^2}^2
			+
			C_L(1+\norm{\Delta \boldsymbol{v}} _{L^2}^2),
		\end{align}
		where Young's inequality was used in the last step.  The estimate \eqref{eqn-cruicial} is the central estimate underlying the proof of this theorem.
			Substitution into \eqref{eq:Eidentity} gives
		\begin{equation}\label{eq:Eineq}
			E'(t)
			+
			\frac12\norm{\boldsymbol{v}_t(t)} _{L^2}^2
			\leq
			C_L
			\left(
			1+\norm{\Delta \boldsymbol{v}(t)} _{L^2}^2
			\right),
		\end{equation}
		foe a.e. $t\in[0,T]$. 
	
		Let us now set 
		\begin{align*}
		Z(t)=\norm{\Delta \boldsymbol{v}(t)} _{L^2}, \ t\in[0,T]. 
		\end{align*}
		From \eqref{eq:grad-est}, we infer 
		\begin{align*}
		\norm{\nabla \boldsymbol{v}(t)} _{L^2}^2\leq Z(t), \  t\in[0,T]. 
		\end{align*}
		Therefore,  the definition of $E(\cdot)$ given in \eqref{eqn-def-e} yields  for a.e. $t\in[0,T]$ that 
		\begin{align}
		E(t)
		=
		\frac12
		\left(
		Z(t)^2-\norm{\nabla \boldsymbol{v}(t)} _{L^2}^2
		\right)
		\geq
		\frac12(Z(t)^2-Z(t)) =	\frac12\left(Z(t)-\frac12\right)^2-\frac18.
		\end{align}
		Therefore, we find 
		\begin{align*}
		F(t)=E(t)+\frac18 \geq
		\frac12
		\left(
		Z(t)-\frac12
		\right)^2\geq 0. 
		\end{align*}
	Consequently, we deduce 
		\begin{align*}
		Z(t)
		\leq
		\frac12+\sqrt{2F(t)} \ \Longrightarrow \  	Z(t)^2
		\leq
		4F(t)+\frac12.
		\end{align*}
		Therefore, we arrive at 
		\begin{equation}\label{eq:Zcontrol}
			\norm{\Delta \boldsymbol{v}(t)} _{L^2}^2
			\leq
			C(1+F(t)), \ t\in [0,T]. 
		\end{equation}
		Since $F'=E'$, \eqref{eq:Eineq} and \eqref{eq:Zcontrol} imply
		\begin{align*}
		F'(t)
		\leq
		C_L(1+F(t)),
		\end{align*}
	for a.e. $t\in[0,T]$.	Hence, an application of Gr\"onwall's inequality leads to 
		\begin{equation}\label{eq:Fgronwall}
			F(t)+1
			\leq
			(F(0)+1)e^{C_Lt}, \ \text{ for all } \ t\in[0,T]. 
		\end{equation}
			In particular, for every $T>0$,  it follows from \eqref{eq:Zcontrol} that
		\begin{align}\label{eqn-uni-bound}
		\sup_{0\leq t\leq T}F(t)\leq C_T  \ \Longrightarrow \   \sup_{0\leq t\leq T}
		\norm{\Delta \boldsymbol{v}(t)} _{L^2}^2
		\leq C_T.
		\end{align}
The estimate \eqref{eq:grad-est} implies 
		\begin{align*}
		\norm{\nabla \boldsymbol{v}(t)} _{L^2}^2
		\leq
		\norm{\Delta \boldsymbol{v}(t)} _{L^2}
		\leq C_T \ \text{ for all }\ t\in[0,T]. 
		\end{align*}
		Together with \eqref{eq:constraint}, the above two estimates  give
		\begin{equation}\label{eq:H2bound}
				\sup_{0\leq t\leq T}
				\norm{\boldsymbol{v}(t)}_{H^2}
				\leq C_T,
		\end{equation}
		which proves the global $H^2$ estimate.

		Suppose, for contradiction, that
	$
		T_{\max}<\infty.
	$
		The estimate \eqref{eq:H2bound}, applied on every interval
		$[0,T]$ with $T<T_{\max}$, gives
		\begin{align*}
		\sup_{0\leq t<T_{\max}}
		\norm{\boldsymbol{v}(t)}_{H^2}
		\leq C_{T_{\max}}<\infty.
		\end{align*}
		This contradicts the continuation criterion \eqref{eqn-max}.
		Therefore
	$
			T_{\max}=\infty,
	$
	and	the solution exists globally.

		Integrating \eqref{eq:Eineq} from $0$ to $T$ gives
		\begin{align}
		\frac12\int_0^T\norm{\boldsymbol{v}_t(t)} _{L^2}^2 dt
		\leq
		E(0)-E(T)
		+
		C_LT
		+
		C_L\int_0^T\norm{\Delta \boldsymbol{v}(t)} _{L^2}^2 dt.
		\end{align}
		Since
		$
		E(T)\geq-\frac18
		$
		and
		\eqref{eqn-uni-bound} holds, 
		we obtain
		\begin{equation}\label{eq:vtbound}
				\int_0^T\norm{\boldsymbol{v}_t(t)} _{L^2}^2 dt
				\leq C_T,
		\end{equation}
	so that 
	$
		\boldsymbol{v}_t\in L^2(0,T;L^2(\mathbb T_L^2;\mathbb{R}^2)).
	$
	From \eqref{eq:lambda-1}, we observe for all $t\in[0,T]$ that 
		\begin{align}\label{eqn-lambda-est}
			|\lambda_{\boldsymbol{v}}(t)|
			\leq
			\norm{\Delta \boldsymbol{v}(t)} _{L^2}^2+\norm{\nabla \boldsymbol{v}(t)} _{L^2}^2+
			\frac12
			\left|
			\int_{\mathbb T_L^2}
			|\boldsymbol{v}(t)|^2\operatorname{div}\boldsymbol{v}(t) dx
			\right|.
		\end{align}
		For the last term in the right hand side of \eqref{eqn-lambda-est}, we use H\"older's and Gagliardo--Nirenberg's inequalities to estimate it as
		\begin{align}\label{eqn-ener-3}
			\left|
			\int |\boldsymbol{v}|^2\operatorname{div}\boldsymbol{v} dx
			\right|
		&	\leq\sqrt{2}
			\norm{\boldsymbol{v}}_{L^4}^2\norm{\nabla \boldsymbol{v}} _{L^2}\leq C_L\|\boldsymbol{v}\|_{L^2}\left(\|\boldsymbol{v}\|_{L^2}+\|\nabla \boldsymbol{v}\|_{L^2}\right) \|\nabla \boldsymbol{v}\|_{L^2} \nonumber\\&\leq C_L\left(1+ \|\nabla \boldsymbol{v}\|_{L^2}^2\right),
		\end{align}
	where we have used the fact that $ \boldsymbol{v}\in\mathcal{M}$. 	
		Therefore using \eqref{eqn-uni-bound} and \eqref{eqn-ener-3} in \eqref{eqn-lambda-est}, we arrive at 
		\begin{align}\label{eqn-ener-4}
			\sup_{0\leq t\leq T}|\lambda_{\boldsymbol{v}}(t)|
			\leq C_T.
	\end{align}

		Rearranging \eqref{eq:KSE}, we obtain 
		\begin{align}\label{eqn-ener-5}
		\Delta^2\boldsymbol{v}
		=
		-\boldsymbol{v}_t-\Delta \boldsymbol{v}-(\boldsymbol{v}\cdot\nabla)\boldsymbol{v}-\lambda_{\boldsymbol{v}} \boldsymbol{v}.
		\end{align}
		By \eqref{eqn-uni-bound} and  \eqref{eq:vtbound}, we deduce 
		\begin{align*}
		\boldsymbol{v}_t\in L^2(0,T;L^2(\mathbb T_L^2;\mathbb{R}^2)) \ \text{ and }\ 	\Delta \boldsymbol{v}\in L^\infty(0,T;L^2(\mathbb T_L^2;\mathbb{R}^2))
		\subset L^2(0,T;L^2(\mathbb T_L^2;\mathbb{R}^2)).
		\end{align*}
		Furthermore, by \eqref{eq:nonlinear} and the $H^2$-estimate \eqref{eqn-uni-bound}, we get 
		\begin{align*}
		(	\boldsymbol{v}\cdot\nabla)	\boldsymbol{v}
		\in L^\infty(0,T;L^2(\mathbb T_L^2;\mathbb{R}^2))
		\subset L^2(0,T;L^2(\mathbb T_L^2;\mathbb{R}^2)).
		\end{align*}
		Finally, the estimate \eqref{eqn-ener-4} implies 
		\begin{align*}
		\lambda_{\boldsymbol{v}}	\boldsymbol{v}
		\in L^\infty(0,T;L^2(\mathbb T_L^2;\mathbb{R}^2))
		\subset L^2(0,T;L^2(\mathbb T_L^2;\mathbb{R}^2)).
		\end{align*}
		Hence, from \eqref{eqn-ener-5}, we infer 
			\begin{align*}
		\Delta^2\boldsymbol{v}\in L^2(0,T;L^2(\mathbb T_L^2;\mathbb{R}^2)).
	\end{align*}
		Consequently, by using \eqref{eq:elliptic}, we get 
	$
		\boldsymbol{v}	\in
			L^2(0,T;H^4(\mathbb T_L^2;\mathbb R^2)).
	$
		Since $T>0$ is arbitrary, we deduce 
	\begin{align*}
			\boldsymbol{v}	 \in
		L^2_{\mathrm{loc}}
		(0,\infty;H^4(\mathbb T_L^2;\mathbb R^2)).
		\end{align*}

		The local solution satisfies
		\begin{align*}
		\boldsymbol{v}\in
		C([0,T_{\max});H^2(\mathbb T_L^2;\mathbb{R}^2))
		\cap
		H^1(0,T_{\max};L^2(\mathbb T_L^2;\mathbb{R}^2))
		\end{align*}
	and we have proven that 
	$
		T_{\max}=\infty.
	$
		Therefore, we have 
		\begin{align*}
		\boldsymbol{v}\in
		C([0,\infty);H^2(\mathbb T_L^2;\mathbb{R}^2))
		\cap
	H^1_{\mathrm{loc}}(0,\infty;L^2(\mathbb T_L^2;\mathbb{R}^2)).
		\end{align*}
		Uniqueness follows from the local well-posedness theorem (Step 3 in the proof of Theorem \ref{thm-local}).
		Indeed, if $\boldsymbol{v}_1$ and $\boldsymbol{v}_2$ are two solutions of \eqref{eq:KSE} with the same initial datum,
		then local uniqueness gives
	$
		\boldsymbol{v}_1=\boldsymbol{v}_2
	$
		on some interval $[0,T_1]$. At time $T_1$, both solutions have the same
		value in $H^2(\mathbb T_L^2;\mathbb{R}^2)$, and the same local uniqueness theorem can be applied again.
		Repeating this argument on every finite interval gives
		$
		\boldsymbol{v}_1=\boldsymbol{v}_2
		\ \text{ on }\ [0,\infty).
		$
			Finally, the constraint \eqref{eq:constraint} holds on the whole global
		interval. Thus
	$
		\norm{\boldsymbol{v}(t)} _{L^2}=1
		\ \text{ for every }\ t\geq0.
		$
		This completes the proof.
	\end{proof}

	\begin{remark}
		Alternatively, by applying Hölder's inequality together with the Gagliardo--Nirenberg interpolation inequality, we obtain an estimate for \(\|(\boldsymbol{v}\cdot\nabla)\boldsymbol{v}\|_{L^2}\) that coincides with the bound in \eqref{eq:nonlinear}.  For $2<p\leq \infty$, using  \eqref{eq:constraint} and \eqref{eq:grad-est}, we have 
		\begin{align*}
				\norm{(\boldsymbol{v}\cdot\nabla)\boldsymbol{v}} _{L^2}&\leq \|\boldsymbol{v}\|_{L^p}\|\nabla\boldsymbol{v}\|_{L^{\frac{2p}{p-2}}}\nonumber\\&\leq C_L\|\boldsymbol{v}\|_{L^2}^{\frac2p}\left(\|\boldsymbol{v}\|_{L^2}+\|\nabla \boldsymbol{v}\|_{L^{2}}\right)^{\frac{p-2}{p}}\|\nabla \boldsymbol{v}\|_{L^2}^{\frac{p-2}{p}}\left(\|\boldsymbol{v}\|_{L^2}+\|\Delta \boldsymbol{v}\|_{L^{2}}\right)^{\frac2p}
				\nonumber\\&= C_L\left(1+\|\nabla \boldsymbol{v}\|_{L^{2}}\right)^{\frac{p-2}{p}} \|\nabla \boldsymbol{v}\|_{L^2}^{\frac{p-2}{p}}\left(1+\|\Delta \boldsymbol{v}\|_{L^{2}}\right)^{\frac2p}
				\nonumber\\&\leq C_L \left(1+\|\Delta \boldsymbol{v}\|_{L^{2}}^{\frac12}\right)^{\frac{p-2}{p}} \|\Delta \boldsymbol{v}\|_{L^{2}}^{\frac{p-2}{2p}}\left(1+\|\Delta \boldsymbol{v}\|_{L^{2}}\right)^{\frac2p}\nonumber\\&\leq C_L\left(1+\|\Delta \boldsymbol{v}\|_{L^{2}}\right).
		\end{align*}
	\end{remark}

	\begin{remark}\label{rem-3D}
The main difficulty in three dimensions lies in obtaining a suitable estimate for the right-hand side of \eqref{eq:Eidentity}. Indeed, the three-dimensional Agmon inequality gives
		\begin{equation*}
			\norm{\boldsymbol{v}}_{L^\infty}
			\leq
			C_L
			\norm{\boldsymbol{v}} _{L^2}^{1/4}
			\norm{\boldsymbol{v}}_{H^2}^{3/4}\leq  C_L
			\norm{\boldsymbol{v}}_{H^2}^{3/4}\leq C_L(1+\|\Delta\boldsymbol{v}\|_{L^2})^{3/4},
		\end{equation*}
		where we have used the fact that $\boldsymbol{v}\in\mathcal{M}$ and \eqref{eqn-ener-6}. A calculation similar to \eqref{eq:nonlinear} yields 
		\begin{align*}
			\norm{(\boldsymbol{v}\cdot\nabla)\boldsymbol{v}} _{L^2}
			\leq	C_L(1+\norm{\Delta \boldsymbol{v}} _{L^2})^{5/4}.
		\end{align*}
		However, this estimate is not sufficient to close the energy inequality \eqref{eq:Eineq} (see \eqref{eq:Eidentity} also). Therefore, the global solvability of the three-dimensional constrained KS equation is beyond the scope of the present work and is left for future investigation.
	\end{remark}

		\smallskip\noindent
		\textbf{Acknowledgments:} 
		The second author would like to thank Mr. Sagar Gautam for helpful initial discussions concerning the model.	H. Tiwari wants to thank University Grants Commission (UGC),	Govt. of India for financial assistance.  Support for M. T. Mohan's research received from the National Board of Higher Mathematics (NBHM), Department of Atomic Energy, Government of India (Project No. 02011/13/2025/NBHM(R.P)/R\&D II/1137).

				\smallskip\noindent	\textbf{Declarations:} 
				
				\noindent 	\textbf{Ethical Approval:}   Not applicable 
				
				\noindent  \textbf{Competing interests: } The authors declare no competing interests. 
				
				\noindent  \textbf{Conflict of interest: }On behalf of all authors, the corresponding author states that there is no conflict of interest.
				
				\noindent 	\textbf{Authors' contributions:} All authors have contributed equally. 
				
				\noindent 	\textbf{Availability of data and materials:} Not applicable. 
				
				\bibliographystyle{plain}
				\bibliography{Harsh}
				
				\end{document}